\title{L'espace ad\'elique d'un tore sur un corps de fonctions}
\author{David Harari et Diego Izquierdo}
\date{\today}
\titleformat{\section}[hang]{\center\Large\bf}{\thesection.}{0.5cm}{}
\DeclareSymbolFont{cyrletters}{OT2}{wncyr}{m}{n}
\DeclareMathSymbol{\Sha}{\mathalpha}{cyrletters}{"58}
\DeclareMathSymbol{\Brusse}{\mathalpha}{cyrletters}{"42}
\theoremstyle{plain}
\newtheorem{theorem}{Théorème}[section]
\newtheorem{lemma}[theorem]{Lemme}
\newtheorem{proposition}[theorem]{Proposition}
\newtheorem{corollary}[theorem]{Corollaire}
\theoremstyle{definition}
\newtheorem{remarque}[theorem]{Remarque}
\newtheorem{notation}[theorem]{Notation}
\newtheorem{conclusion}[theorem]{Conclusion}
\newtheoremstyle{hypo}  
  {\topsep}   
  {\topsep}   
  {\itshape}  
  {1.5ex}       
  {\bfseries} 
  {)}         
  {8pt plus 1pt minus 1pt}  
  {}          
\theoremstyle{hypo}
\newtheoremstyle{hypol}  
  {\topsep}   
  {\topsep}   
  {\itshape}  
  {1.5ex}       
  {\bfseries} 
  {)$_{\ell}$}         
  {8pt plus 1pt minus 1pt}  
  {}          
\theoremstyle{hypol}
\newtheoremstyle{DP}  
  {\topsep}   
  {\topsep}   
  {\itshape}  
  {}       
  {\bfseries} 
  {.}         
  {8pt plus 1pt minus 1pt}  
  {}          
\theoremstyle{DP}
\begin{document}

\maketitle

\small

\textbf{Résumé.} Soient $k$ un corps de caractéristique 0 et $K$ le corps des fonctions d'une $k$-courbe projective lisse géométriquement intègre $X$. Soit $T$ un $K$-tore. Dans cet article, on cherche à étudier l'espace des points adéliques $T(S,\A_K)$ de $T$ hors d'un ensemble fini $S$ de points fermés de $X$. On commence par montrer que le groupe $T(K)$ des points rationnels de $T$ est toujours fermé discret dans $T(S,\A_K)$. On décrit ensuite le quotient $T(\emptyset,\A_K)/T(K)$ dans chacun des trois cas suivants: $k$ corps algébriquement clos, $k=\mathbb{C}((t))$ et $k$ corps $p$-adique. \\

\textbf{Abstract.} Let $k$ be a field of characteristic 0 and let $K$ be the function field of a smooth projective geometrically integral $k$-curve $X$. Let $T$ be a $K$-torus. In this article, we aim at studying the space of adelic points $T(S,\A_K)$ of $T$ outside a finite set $S$ of closed points of $X$. We start by proving that the group $T(K)$ of rational points of $T$ is always discrete (hence closed) in $T(S,\A_K)$. We then describe the quotient $T(\emptyset,\A_K)/T(K)$ in each of the following three cases: 
$k$ is an algebraically closed field, $k$ is the field of Laurent series $\mathbb{C}((t))$, and $k$ is a $p$-adic field.

\normalsize

\section{Introduction}

Soit $k$ un corps de caract\'eristique $0$. On consid\`ere le corps 
des fonctions $K$ d'une courbe projective et lisse 
$X$ d\'efinie sur $k$, et les divers compl\'et\'es $K_v$ (d'anneau des entiers
$\calo_v$) du corps $K$ 
par rapport aux valuations induites par les points ferm\'es de $X$. 
Soit maintenant $G$ un $K$-groupe alg\'ebrique lin\'eaire connexe pour lequel 
on choisit un mod\`ele lisse ${\mathcal G}$ au-dessus d'un ouvert de 
Zariski non vide $U$ de $X$. 
L'{\it espace ad\'elique} de $G$, not\'e $G(\A_K)$,  
est le produit restreint $\prod_{v \in X^{(1)}} '  G(K_v)$ sur l'ensemble 
$X^{(1)}$ des points ferm\'es de $X$, par rapport aux 
${\mathcal G}(\calo_v)$ (il est ind\'ependant du mod\`ele choisi). 
On peut aussi enlever un nombre fini $S$ de points ferm\'es et consid\'erer 
$G(S, \A_K):=\prod_{v \in U^{(1)}} '  G(K_v)$ quand $U=X-S$ est un ouvert 
de Zariski non vide de $X$.

\smallskip

Ces derni\`eres ann\'ees, plusieurs travaux se sont pench\'es sur 
des questions arithm\'etiques classiques li\'ees aux groupes alg\'ebriques 
sur ces corps de fonctions, comme le principe local-global pour leurs 
espaces principaux homog\`enes ou encore l'approximation faible 
(c'est-\`a-dire la densit\'e de l'ensemble des points rationnels $G(K)$ 
dans $\prod_{v \in X^{(1)}} G(K_v)$, ce dernier groupe \'etant \'equip\'e de la 
topologie produit). Mentionnons par exemple 
\cite{gil}, qui \'etablit l'approximation faible pour $k$ alg\'ebriquement 
clos, les travaux \cite{dhsza1} et \cite{dhsza2} qui traitent du cas 
o\`u $k$ est $p$-adique, l'article \cite{dhct} qui concerne le cas 
$k=\CC((t))$, et les diverses 
g\'en\'eralisations \cite{diego1}, \cite{diego2} au cas o\`u $k$ est un 
corps local sup\'erieur.

\smallskip

L'origine de ce travail est l'article \cite{colejm} de J.-L. 
Colliot-Th\'el\`ene, lequel s'int\'eresse quand $k=\CC$ 
\`a la question de l'{\it approximation 
forte} pour $G$, c'est-\`a-dire \`a la densit\'e de $G(K)$ dans 
un espace ad\'elique $G(S,\A_K)$, ce dernier \'etant \'equip\'e de la topologie 
de produit restreint (et non plus comme pour l'approximation 
faible de celle induite par la topologie produit sur 
$\prod_{v \in U^{(1)}} G(K_v)$). 
Les deux r\'esultats principaux de \cite{colejm} 
sont~: d'une part, la validit\'e de 
l'approximation forte quand $S \neq \emptyset$ et $G$ est semi-simple; d'autre 
part, des exemples o\`u cette approximation forte ne vaut 
pas quand $G$ est un tore. Notre but ici est de pr\'eciser cet 
\'enonc\'e sur les tores dans plusieurs directions. 

\smallskip

Notre premier r\'esultat (th\'eor\`eme~\ref{gentheo}) est le suivant~:
nous montrons que pour tout corps $k$ de caract\'eristique z\'ero, 
tout ensemble fini $S$ de points ferm\'es de $X$, et tout $K$-tore $T$, 
l'image de $T(K)$ dans $T(S,\A_K)$ est un sous-groupe discret (donc 
ferm\'e) de $T(S,\A_K)$. En particulier l'approximation forte ne vaut 
jamais pour un tore de dimension $>0$.

\smallskip

Ainsi, dans notre situation, le d\'efaut d'approximation forte 
est simplement le quotient $A(S,T)$ de 
l'espace ad\'elique $T(S,\A_K)$ par $T(K)$; quand $T=\G$ et $S=\emptyset$, 
ce quotient $C_K=(\prod_{v \in X^{(1)}} ' K_v^* )/K^*$ 
est juste le {\it groupe des classes d'id\`eles} de $K$,  
notion qui est classique pour un corps de nombres ou le corps des fonctions 
d'une courbe sur un corps fini. Par analogie avec la th\'eorie du corps de
classes global, on aimerait maintenant mieux comprendre la structure 
de $T(S,\A_K)$ et notamment relier $A(\emptyset,T)$ \`a une obstruction 
de r\'eciprocit\'e \`a l'approximation forte. Plus pr\'ecis\'ement, 
on cherche \`a construire une fl\`eche fonctorielle $r: A(\emptyset,T) \to 
P$ d'image dense, o\`u $P$ est un groupe ab\'elien 
profini (dual d'un groupe discret de 
torsion d\'efini via la cohomologie galoisienne du tore $T$). Comme 
dans le cas du groupe des classes d'id\`eles d'un corps de nombres 
(o\`u $P$ est juste le groupe de Galois ab\'elien de $K$), on ne peut 
esp\'erer que $r$ soit injective mais seulement que son noyau 
soit un groupe ab\'elien divisible (le groupe profini $P$ n'ayant pas 
d'\'el\'ement infiniment divisible non nul).

\smallskip

Pour tout groupe topologique ab\'elien $B$, notons $B^D$ son dual, 
c'est-\`a-dire le groupe $\Hom_c(B,\Q/\Z)$ 
des homomorphismes continus de $B$ dans $\Q/\Z$ 
(quand la topologie sur $B$ n'est pas pr\'ecis\'ee, on convient que $B$ est 
muni de la topologie discr\`ete). 
Dans les trois situations consid\'er\'ees dans les articles 
ant\'erieurs, nous obtenons alors les r\'esultats suivants~:

\smallskip

a) Pour $k$ alg\'ebriquement clos (auquel cas $K$ est de dimension 
cohomologique ${\rm cd}(K)=1$), le groupe $A(S,T)$ est divisible
si $S \neq \emptyset$ (on a donc en quelque sorte ``approximation 
forte modulo divisible''). Ce r\'esultat vaut encore si on remplace le tore 
$T$  par un groupe alg\'ebrique lin\'eaire connexe quelconque 
(th\'eor\`eme~\ref{lineairetheo}).
De plus, le quotient $\overline{{A(\emptyset,T)}}$ 
de $A(\emptyset,T)$ par son sous-groupe divisible maximal est 
un groupe de type fini dont on peut calculer 
le rang (th\'eor\`eme~\ref{rangtheo}). 
Nous \'etablissons aussi une suite de 
Poitou-Tate pour $T$ (c'est le cas "$d=-1$" du 
th\'eor\`eme~3.20 de \cite{diego1}, qui avait \'et\'e fait pour les modules
finis mais pas pour les tores), et en d\'eduisons 
une fl\`eche $r : {A(\emptyset,T)} \to 
H^0(K,\hat T \otimes \Q/\Z(-1) )^D$ d'image dense et de noyau divisible
 (th\'eor\`eme~\ref{kalgclos}), o\`u $\hat T$ est le module des caract\`eres
de $T$.

\smallskip

b) Pour $k=\CC((t))$ (auquel cas ${\rm cd}(K)=2$, comme pour un corps de
fonctions sur un corps fini), 
on a encore (th\'eor\`eme~\ref{cttheo}) une application de r\'eciprocit\'e 
$r : {A(\emptyset,T)} \to H^2(K,\hat T)^D$ de noyau divisible, 
mais contrairement au cas o\`u $k$ est fini (auquel cas lorsque  
$T=\G$, l'image $I={\rm Im} \, r$ est m\^eme dense), l'adh\'erence 
$I_{\rm adh}$ de $I$ n'est pas d'indice fini dans $H^2(K,\hat T)^D$.
Plus pr\'ecis\'ement, le quotient $H^2(K,\hat T)^D/I_{\rm adh}$ 
est isomorphe au dual d'un groupe de 
Tate-Shafarevich $\Sha^2(K,\hat T)$, qui est infini en g\'en\'eral.
Ce groupe est d\'efini par la formule 
habituelle $\Sha^2(K,\hat T):=
\ker [H^2(K,\hat T) \to \prod_{v \in X^{(1)}} H^2(K_v,\hat T)]$.

\smallskip

c) Pour $k$ corps $p$-adique (auquel cas ${\rm cd}(K)=3$), on a un 
r\'esultat analogue (corollaire~\ref{qpcor}) 
en rempla\c cant $H^2(K,\hat T)$ par 
$H^2(K,T')$, o\`u $T'$ est le tore dual de $T$; mais ici, l'adh\'erence 
de ${\rm Im} \, r$ est \`a nouveau d'indice fini dans 
$H^2(K,T')^D$, le quotient \'etant le groupe fini $\Sha^1(K,T):=
\ker [H^1(K,T) \to \prod_{v \in X^{(1)}} H^1(K_v, T)]$. 

\smallskip

On comparera les r\'esultats b) et c) 
avec les calculs du d\'efaut d'approximation 
faible pour un tore de \cite{dhct} et \cite{dhsza2}. Pour l'approximation 
forte, ce sont les groupes $H^2(K,\hat T)$ et $H^2(K,T')$ qui apparaissent 
respectivement, et non plus leur sous-groupe des \'el\'ements 
presque partout localement triviaux. En ce sens, la situation est assez 
similaire \`a celle des corps de nombres (voir \cite{dhant}). 

\smallskip 

Nous compl\'etons notre \'etude par une description plus pr\'ecise de
la partie de torsion du sous-groupe divisible maximal de $A(\emptyset,T)$, 
que nous relions \`a la cohomologie galoisienne de $\hat T$ (corollaires 
\ref{corcc}, \ref{corcct} et \ref{corccqp}). 
La structure de $A(\emptyset,T)$ est ainsi compl\`etement 
\'elucid\'ee dans les trois cas consid\'er\'es.

\smallskip

Par ailleurs, une difficult\'e 
(notamment dans le cas c)) est qu'il n'est pas a priori \'evident que 
le sous-groupe divisible maximal et le sous-groupe des \'el\'ements 
infiniments divisibles co\"{\i}ncident pour $A(\emptyset,T)$; nous 
commen\c cons donc par quelques g\'en\'eralit\'es sur ces deux notions.

\section{Quelques lemmes sur les groupes ab\'eliens} 

Soit $B$ un groupe ab\'elien. On note $B_{\div}$ le plus grand 
sous-groupe divisible de $B$, qui est la somme de tous les sous-groupes 
divisibles de $B$. On rappelle que comme tout sous-groupe divisible, 
c'est un facteur direct de $B$. On pose $\ov B=B/B_{\div}$. 
Par ailleurs, on note 
$B_{\infty-\div}=\bigcap_{n \in \NN^*} nB$ le sous-groupe des \'el\'ements 
infiniment divisibles de $B$. On a clairement $B_{\div} \subset 
B_{\infty-\div}$, mais l'inclusion peut \^etre stricte si on ne fait 
pas d'hypoth\`ese suppl\'ementaire sur $B$, autrement dit $B_{\infty-\div}$ 
peut ne pas \^etre un groupe divisible (cf. \cite{fuc}, exemple de la page 150). 
Si $p$ est un nombre 
premier et $B_{\div_p}$ d\'esigne 
le plus grand sous-groupe $p$-divisible de $B$ (c'est le plus grand 
sous-groupe sur lequel la multiplication par $p$ est surjective), on voit
imm\'ediatement que $B_{\div}$ est l'intersection pour $p$ premier 
des $B_{\div_p}$. On a $x \in B_{\div_p}$ si et seulement s'il existe 
une suite infinie (et pas seulement une suite finie de longueur 
arbitrairement grande) $(x_n)$ d'\'el\'ements de $B$ 
avec $x_1=x$ et $x_n=p x_{n+1}$ pour tout 
$n \in \NN^*$. On en d\'eduit ais\'ement 
que $x \in B_{\div}$ si et seulement s'il existe une suite infinie 
$(x_n)$ d'\'el\'ements de $B$ 
avec $x_1=x$ et $x_n=mx_{mn}$ pour tous $m,n \in \NN^*$. 

\smallskip

Si $B$ est un groupe de 
torsion de type cofini (i.e. la $n$-torsion $_n B$ est finie pour tout
$n \in \NN^*$), il est classique que $B_{\div}=B_{\infty-\div}$ car 
on sait alors que $B$ est somme directe sur tous les nombres premiers 
$\ell$ de groupes de la forme $F \oplus (\Q_{\ell}/\Z_{\ell})^r$, o\`u 
$F$ est un $\ell$-groupe ab\'elien fini.

\smallskip

Dans cette section, on regroupe quelques r\'esultats g\'en\'eraux ayant 
trait notamment au comportement de $B_{\div}$ et $B_{\infty-\div}$ dans 
les suites exactes. On note $B_{\rm tors}=\bigcup_{n >0} \, _n B$ le sous-groupe
de torsion de $B$.

\begin{lemma}\label{easy}
Considérons une suite exacte de groupes abéliens:
\begin{equation}
0\rightarrow A \xrightarrow[]{f} B \xrightarrow[]{g} C \rightarrow 0, \label{se1}
\end{equation}
telle que $C_{\text{div}}=0$. Alors la suite (\ref{se1}) induit une suite exacte:
\begin{equation}
0\rightarrow \overline{A} \xrightarrow[]{\overline{f}} \overline{B} \xrightarrow[]{\overline{g}} C \rightarrow 0.\label{se2}
\end{equation}
\end{lemma}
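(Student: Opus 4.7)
The plan is to reduce everything to a single clean fact: if $f: A \hookrightarrow B$ is an injection and $C = B/A$ has trivial divisible part, then $A_{\div} = B_{\div}$. Once this is established, the conclusion follows formally by quotienting the original sequence.

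First I would identify $A$ with its image $f(A) \subseteq B$. The inclusion $A_{\div} \subseteq B_{\div}$ is automatic, since $A_{\div}$ is a divisible subgroup of $B$ and $B_{\div}$ is by definition the sum of all such subgroups. The nontrivial inclusion is the reverse one, and this is where the hypothesis $C_{\div} = 0$ enters. The image $g(B_{\div})$ is divisible, being the homomorphic image of a divisible group, so $g(B_{\div}) \subseteq C_{\div} = 0$; hence $B_{\div} \subseteq \ker g = A$. Then $B_{\div}$ is a divisible subgroup of $A$, which gives $B_{\div} \subseteq A_{\div}$.

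Writing $D := A_{\div} = B_{\div}$, it remains to check that quotienting the sequence \eqref{se1} by $D$ yields \eqref{se2}. Since $D \subseteq A$, the intersection $A \cap D$ equals $D$, so the map $\overline{f}: A/D \to B/D$ is injective. The composition $\overline{g} \circ \overline{f}$ is zero, and for any $b \in B$ with $g(b) = 0$ one has $b \in A$, so $\ker \overline{g} = \overline{f}(\overline{A})$. Surjectivity of $\overline{g}$ follows from surjectivity of $g$.

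I do not expect any real obstacle: the entire content is the equality $A_{\div} = B_{\div}$, which rests on the two formal observations that a divisible subgroup of $A$ remains divisible in $B$, and that a homomorphism sends divisible subgroups into divisible subgroups. The role of the assumption $C_{\div} = 0$ is precisely to force $B_{\div}$ to land inside $A$.
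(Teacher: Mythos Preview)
Your proof is correct and takes essentially the same approach as the paper: both rest on the observation that $g(B_{\div}) \subseteq C_{\div} = 0$, forcing $B_{\div} \subseteq A$ and hence $B_{\div} = A_{\div}$. The paper packages this as an element-chase verifying injectivity of $\overline{f}$ via the compatible-system description of $A_{\div}$, whereas you state $A_{\div} = B_{\div}$ directly and then quotient; the content is identical.
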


\begin{proof}
Le seul point non trivial à vérifier est l'injectivité de 
$\overline{f}$. Donnons-nous $x \in \text{Ker} (\overline{f})$. 
Soit $\tilde{x} \in A$ un relèvement de $x$. On sait alors que 
$\tilde{y}:=f(\tilde{x}) \in B_{\text{div}}$. 
Comme $B_{\div}$ est divisible, il existe 
une famille $(\tilde{y}_n)_{n\geq 1}$ à valeurs dans 
$B_{\text{div}}$ telle que $\tilde{y}=\tilde{y}_1$ et 
$\tilde{y}_n=m\tilde{y}_{mn}$ pour tous $m,n\geq 1$. 
De plus, comme $C_{\text{div}}=0$, on a $g(\tilde{y}_n)=0$, et donc, d'après 
la suite exacte (\ref{se1}), il existe $\tilde{x}_n\in A$ vérifiant 
$f(\tilde{x}_n)=\tilde{y}_n$. Par injectivité de $f$ on déduit que 
$\tilde{x}=\tilde{x}_1$ et $\tilde{x}_n=m\tilde{x}_{mn}$ pour tous $m,n\geq 1$. Autrement dit, $\tilde{x} \in A_{\text{div}}$ et $x=0$.

\end{proof}

\begin{lemma}\label{infdiv0}
Considérons une exacte de groupes abéliens:
$$0\rightarrow A \xrightarrow[]{f} B \xrightarrow[]{g} C\rightarrow 0$$
où $C$ est un groupe d'exposant fini $e$. Alors 
$B_{\infty-\div}=A_{\infty-\div}$.
\end{lemma}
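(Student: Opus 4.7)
Le plan est d'\'etablir la double inclusion $A_{\infty-\text{div}} \subset B_{\infty-\text{div}}$ et $B_{\infty-\text{div}} \subset A_{\infty-\text{div}}$, en identifiant $A$ \`a son image par $f$ dans $B$, ce que l'injectivit\'e de $f$ autorise.

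La premi\`ere inclusion sera imm\'ediate et n'utilisera m\^eme pas l'hypoth\`ese sur $C$ : si $x \in A_{\infty-\text{div}}$, alors pour chaque $n \geq 1$ on a $x \in nA \subset nB$, d'o\`u $x \in B_{\infty-\text{div}}$. C'est la seconde inclusion qui est le c\oe{}ur du lemme, et qui exploite l'hypoth\`ese d'exposant fini sur $C$.

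Pour la traiter, on part d'un \'el\'ement $y \in B_{\infty-\text{div}}$ et on proc\`ede en deux temps. Tout d'abord, on v\'erifiera que $y$ appartient d\'ej\`a \`a $A$ : puisque $y \in eB$, on peut \'ecrire $y = ez$ avec $z \in B$, et alors $g(y) = e \cdot g(z) = 0$ car $C$ est d'exposant $e$, ce qui donne $y \in \text{Ker}\,g = f(A)$, disons $y = f(x)$. Il restera alors \`a montrer que $x \in nA$ pour tout $n \geq 1$. Fix\'e $n$, l'appartenance $y \in neB$ fournit un \'el\'ement $b \in B$ avec $y = neb$ ; l'\'el\'ement $eb$ est \`a son tour annul\'e par $g$ (toujours parce que $C$ est d'exposant $e$), donc se rel\`eve en un \'el\'ement $a \in A$ v\'erifiant $f(a) = eb$. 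On obtiendra $f(x) = y = nf(a) = f(na)$, puis $x = na \in nA$ par injectivit\'e de $f$. En faisant varier $n$, on conclura $x \in A_{\infty-\text{div}}$.

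Aucun obstacle technique s\'erieux n'est \`a pr\'evoir : la seule subtilit\'e est de bien distinguer la divisibilit\'e infinie calcul\'ee dans $B$ de celle calcul\'ee dans $A$. L'hypoth\`ese $eC=0$ joue pr\'ecis\'ement le r\^ole de pont entre les deux, en garantissant qu'\`a chaque \'etape, tout ``multiple de $e$'' apparaissant dans la d\'ecomposition $y = neb$ redescend automatiquement dans $A$, ce qui permet d'absorber l'exposant $e$ et de conserver le facteur $n$ voulu.
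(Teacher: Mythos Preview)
Votre proposition est correcte et suit essentiellement la m\^eme d\'emarche que la preuve du papier : montrer d'abord que tout \'el\'ement de $B_{\infty-\div}$ est dans l'image de $f$ (via $y=ez$), puis exploiter pour chaque $n$ l'\'ecriture $y=neb$ afin d'obtenir un ant\'ec\'edent $a\in A$ de $eb$ et conclure $x=na$. La seule diff\'erence est de pr\'esentation : le papier fixe d'embl\'ee une famille $(x_n)$ avec $x=nx_n$ et pose $y_n$ tel que $f(y_n)=ex_{ne}$, tandis que vous refaites le choix \`a chaque $n$ ; c'est rigoureusement la m\^eme id\'ee.
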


\begin{proof}
Soit $x \in B_{\infty-\div}$. 
Pour $n\geq 1$, écrivons $x=nx_n$ avec $x_n \in B$.
Pour $n\geq 1$, on a $x=n\cdot (ex_{ne})$. Le groupe $C$ étant d'exposant $e$, il existe $y\in A$ ainsi que $y_n \in A$ tels que $f(y)=x$ et $f(y_n)=ex_{ne}$. On a alors $y=ny_n$. Cela prouve que $x \in A_{\infty-\div}$, comme on voulait.

\end{proof}

\begin{lemma}\label{infdiv}
Considérons une exacte de groupes abéliens:
$$0\rightarrow A \xrightarrow[]{f} B \xrightarrow[]{g} C \rightarrow 0.$$
Supposons que le groupe $C$ s'insère dans une suite exacte:
$$0\rightarrow C' \xrightarrow[]{h} C \xrightarrow[]{i} E\rightarrow 0$$
où $C'$ est un groupe sans torsion avec $C'_{\infty-\div}=0$ 
et $E$ est un groupe d'exposant fini $e$. Alors 
$B_{\infty-\div}=A_{\infty-\div}$.

\end{lemma}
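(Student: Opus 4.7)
The plan is to split the short exact sequence for $C$ into a filtration on $B$ and apply the previous lemma to reduce to a torsion-free quotient. Concretely, I would set $B' := g^{-1}(h(C')) \subset B$. Two short exact sequences of abelian groups then appear:
\[
0 \to A \to B' \to C' \to 0,
\]
obtained by restricting $g$ (whose image on $B'$ is $h(C')\cong C'$), and
\[
0 \to B' \to B \to E \to 0,
\]
since $B/B' \cong C/h(C') \cong E$.

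The second sequence has cokernel $E$ of finite exponent $e$, so Lemma~\ref{infdiv0} immediately gives $B_{\infty-\div} = B'_{\infty-\div}$. It therefore suffices to prove $B'_{\infty-\div} = A_{\infty-\div}$, i.e.\ to handle the case where the quotient is torsion-free with no infinitely divisible element.

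The inclusion $A_{\infty-\div} \subset B'_{\infty-\div}$ is automatic. For the converse, given $x \in B'_{\infty-\div}$, I would write $x = n x_n$ with $x_n \in B'$ for each $n \geq 1$; pushing to $C'$ yields $g(x) = n g(x_n)$ for all $n$, so $g(x) \in C'_{\infty-\div} = 0$, hence $x \in A$. Then $n g(x_n) = 0$ in $C'$ combined with the torsion-freeness of $C'$ forces $g(x_n) = 0$, so each $x_n$ already lies in $A$, proving $x \in A_{\infty-\div}$.

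There is no serious obstacle: the argument is entirely formal once the intermediate subgroup $B'$ is introduced. What matters is that both hypotheses on $C'$ are used essentially; the vanishing $C'_{\infty-\div} = 0$ forces $g(x) = 0$, while the torsion-freeness is what lifts each divisibility witness $x_n$ from $B'$ back to $A$. Weakening either hypothesis would break the descent, which is consistent with the authors' earlier remark that $B_{\infty-\div}$ is badly behaved in general under extensions.
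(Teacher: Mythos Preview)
Your proof is correct. The paper's argument is essentially the same at the level of ideas but is organized differently: instead of introducing the intermediate subgroup $B'=g^{-1}(h(C'))$ and splitting the problem into two short exact sequences, the authors work directly with divisibility witnesses $x_n\in B$ and use the multiplication-by-$e$ trick $ex_{ne}$ to force these witnesses into $g^{-1}(h(C'))$ by hand, then invoke torsion-freeness of $C'$ exactly as you do. Your factorisation through $B'$ is cleaner, since after one application of Lemma~\ref{infdiv0} all witnesses already live in $B'$ and no ad hoc rescaling is needed; the paper's version, on the other hand, avoids naming an auxiliary subgroup and keeps everything in the original sequence. Both routes use the two hypotheses on $C'$ in precisely the way you describe.
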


\begin{proof}
Soit $x \in B_{\infty-\div}$. 
Pour $n\geq 1$, écrivons $x=nx_n$ avec $x_n \in B$.
\begin{itemize}
\item[$\bullet$] Montrons d'abord que $x$ est dans l'image de $f$. 
L'élément $g(x)$ de $C$ est infiniment divisible. 
Donc d'après le lemme \ref{infdiv0}, il existe $y\in C'_{\infty-\div}$ 
tel que $h(y)=g(x)$. Par hypothèse, $y=0$, et donc $g(x)=0$: autrement dit, il existe $z\in A$ tel que $f(z)=x$.
\item[$\bullet$] Soit $n\geq 1$. Remarquons que $ng(ex_{ne})=g(x)=0$ et 
d'autre part $g(ex_{ne})\in \text{Im}(h)$ car $E$ est d'exposant $e$. Comme $C'$ est sans torsion, on déduit que $g(ex_{ne})=0$. Il existe alors $z_n \in A$ tel que $f(z_n)=ex_{ne}$. On a $z=nz_n$ pour chaque $n$, donc $z$ est infiniment divisible dans $A$.
\end{itemize}
\end{proof}

\begin{lemma}\label{ud}
Considérons une suite exacte de groupes abéliens:
$$0 \rightarrow A \xrightarrow[]{f} B \xrightarrow[]{g} C \xrightarrow[]{h} C' \rightarrow 0.$$
Supposons que $A$ et $C'$ sont d'exposant fini et que le sous-groupe divisible maximal $B_{\div}$ 
de $B$ est uniquement divisible. Alors $C_{\div}$ est uniquement divisible.
\end{lemma}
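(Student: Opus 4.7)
The plan is to reduce the four-term sequence to a short exact sequence and then transfer unique divisibility from $B_{\div}$ to $C_{\div}$ directly.

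First, I would split the given sequence into
\[
0 \to A \xrightarrow{f} B \xrightarrow{g} D \to 0 \qquad \text{and} \qquad 0 \to D \hookrightarrow C \xrightarrow{h} C' \to 0,
\]
where $D := \mathrm{Im}(g) = \ker(h)$. Since $C'$ has finite exponent, every divisible subgroup of $C'$ is trivial, so the image $h(C_{\div})$ vanishes and $C_{\div} \subseteq D$. Being divisible inside $C$, $C_{\div}$ is a fortiori divisible inside the subgroup $D$, so $C_{\div} \subseteq D_{\div}$; the reverse inclusion is automatic, hence $C_{\div} = D_{\div}$.

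The key step will be to identify $D_{\div}$ with $g(B_{\div})$. The inclusion $g(B_{\div}) \subseteq D_{\div}$ is immediate, since any homomorphic image of a divisible group is divisible. For the reverse inclusion, I plan to exploit the finite exponent $e$ of $A$: as $e \cdot A = 0$, any two lifts $b_{1}, b_{2} \in B$ of the same $d \in D$ satisfy $e b_{1} = e b_{2}$, so the rule $s(d) := e b$ for any lift $b$ of $d$ defines a group homomorphism $s : D \to B$ with $g \circ s = e \cdot \mathrm{id}_{D}$. Group homomorphisms preserve divisible subgroups, hence $s(D_{\div}) \subseteq B_{\div}$ and
\[
D_{\div} = e \cdot D_{\div} = g(s(D_{\div})) \subseteq g(B_{\div}).
\]

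To conclude, I would restrict $g$ to a surjection $g|_{B_{\div}} : B_{\div} \twoheadrightarrow C_{\div}$. Its kernel $A \cap B_{\div}$ is of exponent dividing $e$ (as a subgroup of $A$) and simultaneously torsion-free (as a subgroup of the uniquely divisible group $B_{\div}$), so it is zero. Thus $g$ induces an isomorphism $B_{\div} \xrightarrow{\sim} C_{\div}$, and unique divisibility transfers. The only step that requires care is the construction and functoriality of the transfer map $s$, together with the verification that $C_{\div}$ actually lies in $D$ (so that we may replace $C$ by $D$); once these are in place, the two finite-exponent hypotheses pinch $C_{\div}$ onto $B_{\div}$ and no further work is needed.
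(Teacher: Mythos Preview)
Your argument is correct. Both you and the paper begin with the same reduction: since $C'$ has finite exponent, $C_{\div}$ lands in $D := \mathrm{Im}(g)$, and the problem becomes one about the short exact sequence $0 \to A \to B \to D \to 0$. After that, the organisations diverge. The paper replaces $B$ by $g^{-1}(C_{\div})$ and $C$ by $C_{\div}$, then argues elementwise: given a torsion $x \in C$, it lifts a full divisibility system $(x_n)$ to $(y_n) \subset B$, uses $eA = 0$ to obtain $ey_n = m\, ey_{mn}$, deduces $ey_e \in B_{\div}$, and concludes $x = 0$ from the torsion-freeness of $B_{\div}$. Your proof packages the same ``multiplication by $e$ kills $A$'' idea into the transfer homomorphism $s : D \to B$, $s(d) = eb$, which immediately yields $D_{\div} \subseteq g(B_{\div})$ and hence an isomorphism $g|_{B_{\div}} : B_{\div} \xrightarrow{\sim} C_{\div}$. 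Your route is slightly more structural and in fact proves the stronger statement that $g$ identifies $B_{\div}$ with $C_{\div}$; the paper's explicit sequence argument stays closer to the definition of $B_{\div}$ recalled at the start of the section and avoids introducing the auxiliary map $s$.
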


\begin{proof}
Comme $C'$ est d'exposant fini, $C_{\div}$ est contenu dans l'image de $g$. 
De plus $B_{\div} \subset g^{-1}(C_{\div})$, ce qui montre que 
$B$ et $g^{-1}(C_{\div})$ ont tous deux pour sous-groupe divisible 
maximal $B_{\div}$.
Quitte à remplacer $C$ par $C_{\text{div}}$ et $B$ par 
$g^{-1}(C_{\text{div}})$, on peut donc supposer que $C'=0$ et 
que $C$ est divisible. Donnons-nous $x\in C_{\text{tors}}$ et 
considérons $(x_n)$ une suite d'éléments de $C$ telle que 
$x_1=x$ et $x_n=mx_{mn}$ pour tous $m,n>0$. Pour chaque $n\geq 1$, 
soit $y_n\in B$ tel que $g(y_n)=x_n$. Quels que soient les entiers 
$m$ et $n$, il existe $z_{m,n}\in A$ tel que $y_n=my_{mn} + z_{m,n}$. 
Ainsi, si $e$ désigne l'exposant de $A$, on a $ey_n=mey_{mn}$. 
En particulier, $ey_{e} \in B_{\text{div}}$ car la suite infinie $(u_n)$ 
d\'efinie par $u_n=ey_{en}$ v\'erifie $u_1=e y_e$ et $mu_{mn}=mey_{m(ne)}=
ey_{en}=u_n$ pour tous $m,n \in \NN^*$.
Comme $ey_{e}$ est de torsion (car $g(ey_e)$ l'est, ainsi que $A$), 
on en déduit que $ey_e=0$, puis que $y_1 \in A$ (via l'\'egalit\'e 
$y_1=ey_e+z_{e,1}$) et enfin $x=0$. Cela achève la preuve.
\end{proof}

\begin{lemma}\label{fini}
Considérons une suite exacte de groupes abéliens:
$$0 \rightarrow A \xrightarrow[]{f} B \xrightarrow[]{g} C \rightarrow 0.$$
Supposons que $A$ est fini et que $B_{\infty-\div}=B_{\div}$. 
Alors $C_{\infty-\div}=C_{\div}$.
\end{lemma}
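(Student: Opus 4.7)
The plan is to reduce the claim for $C$ to the assumed property of $B$ via a lifting argument: I will show that every $x \in C_{\infty-\div}$ can be lifted to an element $y \in B_{\infty-\div}$. Since $B_{\infty-\div} = B_{\div}$ by hypothesis, such a lift $y$ lies in the divisible subgroup $B_{\div}$, so $x = g(y) \in g(B_{\div})$. As the image of a divisible group under a homomorphism, $g(B_{\div})$ is itself divisible, hence contained in $C_{\div}$, giving the inclusion $C_{\infty-\div} \subseteq C_{\div}$. The reverse inclusion being automatic, this suffices.

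To produce the lift, fix $x \in C_{\infty-\div}$ and observe that the fiber $g^{-1}(x)$ is a coset of $A$ in $B$, and hence a finite set of cardinality $|A|$. For every $n \geq 1$, the relation $x \in nC$ lets one write $x = n x_n$ with $x_n \in C$; choosing any preimage $y_n \in B$ of $x_n$, the element $n y_n$ belongs to the subset
\[ Y_n := g^{-1}(x) \cap nB \subseteq g^{-1}(x). \]
Thus each $Y_n$ is nonempty, and the family $(Y_n)_{n \geq 1}$ is directed for reverse inclusion since $m \mid n$ implies $Y_n \subseteq Y_m$.

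The final step is a compactness observation: a directed family of nonempty subsets of a finite set has nonempty intersection (the family takes only finitely many values in the finite power set of $g^{-1}(x)$, and any two of them admit a common lower bound in the family, so the intersection coincides with one of its members). Any $y \in \bigcap_{n \geq 1} Y_n$ is then a lift of $x$ lying in $nB$ for every $n$, that is, $y \in B_{\infty-\div}$, and the argument concludes as described above.

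The only non-routine point I foresee is this pigeonhole step, which produces a single lift that is simultaneously $n$-divisible for every $n$: a priori the $n$-divisible lifts $n y_n$ of $x$ constructed separately for different $n$ are distinct elements of the finite fiber $g^{-1}(x)$, and it is precisely the finiteness of $A$ that enables one to extract a uniform choice.
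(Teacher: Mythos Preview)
Your proof is correct and follows essentially the same approach as the paper: both arguments show that an infinitely divisible $x\in C$ admits a lift in $B_{\infty-\div}$ by exploiting the finiteness of the fiber $g^{-1}(x)$. The paper fixes one lift $y$, writes $y=ny_n+t_n$ with $t_n\in A$, and uses pigeonhole on the sequence $(t_{n!})$ to find a single $t\in A$ with $y-t\in\bigcap_n nB$; your compactness formulation via the sets $Y_n=g^{-1}(x)\cap nB$ is a clean repackaging of the same idea.
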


\begin{proof}
Soit $x\in C_{\infty-\div}$. Pour $n >0$, on se donne $x_n  \in C$ tel que $x=nx_n$. On considère ensuite $y \in B$ (resp. $y_n \in B$) tel que $g(y)=x$ (resp. $g(y_n)=x_n$). Pour chaque $n >0$, il existe $t_n\in A$ tel que $y=ny_n+t_n$. Soit $t\in A$ tel que $t_{n!}=t$ pour une infinité de valeurs de $n$. Alors $y-t$ est infiniment divisible dans $B$. Par hypothèse, cela implique que $y-t\in B_{\text{div}}$, et donc que $x=g(y-t)\in C_{\text{div}}$.
\end{proof}

\begin{lemma}\label{tcof}
Soit $B$ un groupe abélien tel que $B_{\text{tors}}$ est de torsion de type cofini. Alors $B_{\infty \text{-div}} = B_{\text{div}}$.
\end{lemma}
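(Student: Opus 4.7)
Le plan consiste à exploiter la caractérisation de $B_{\div}$ rappelée au début de cette section : $x \in B_{\div}$ si et seulement s'il existe une famille $(x_n)_{n \geq 1}$ d'éléments de $B$ telle que $x_1 = x$ et $x_n = m x_{mn}$ pour tous $m, n \in \NN^*$. Partant d'un élément $x \in B_{\infty-\div}$, le problème se ramène à construire une telle famille ``cohérente'' à partir de la seule existence ponctuelle des racines $N$-ièmes de $x$.

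Pour ce faire, on introduit, pour chaque entier $N \geq 1$, l'ensemble
\[
S_N := \{ y \in B : N y = x \}.
\]
Par hypothèse, chaque $S_N$ est non vide, et deux éléments quelconques de $S_N$ diffèrent par un élément de $B[N] := \ker(N \cdot : B \to B)$. Comme $B[N]$ est contenu dans $B_{\mathrm{tors}}$, qui est par hypothèse de type cofini, $B[N]$ est fini, et par suite chaque $S_N$ est un ensemble fini non vide. Les applications de transition naturelles $\pi_{N,M} : S_N \to S_M$, $y \mapsto (N/M) y$ (pour $M \mid N$), munissent la famille $\{S_N\}_{N \geq 1}$ d'une structure de système projectif filtrant indexé par $(\NN^*, \mid)$.

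La dernière étape consiste à invoquer le fait général qu'une limite projective d'un système filtrant d'ensembles finis non vides est non vide (par compacité de $\prod_N S_N$ muni de la topologie discrète, ou de manière équivalente par le lemme de König appliqué à une chaîne cofinale telle que $N = n!$). Tout élément $(y_N) \in \varprojlim_N S_N$ satisfait alors automatiquement $y_1 = x$ et $y_n = m y_{mn}$ pour tous $m, n \in \NN^*$, de sorte que $(y_N)$ fournit la famille cohérente recherchée et $x \in B_{\div}$. L'hypothèse de type cofini n'intervient ainsi qu'en un seul point --- pour garantir la finitude de $B[N]$ --- et on n'anticipe donc pas d'obstacle significatif au-delà de cet argument standard de compacité.
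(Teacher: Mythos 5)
Votre démonstration est correcte, mais elle emprunte une route réellement différente de celle du texte. Vous ramenez tout à un unique argument de compacité~: pour $x \in B_{\infty-\div}$, chaque fibre $S_N=\{y \in B : Ny=x\}$ est un translaté du groupe fini $_N B$ (c'est bien le seul endroit où l'hypothèse de type cofini intervient), le système $(S_N)_{N\geq 1}$ indexé par la divisibilité est un système projectif filtrant d'ensembles finis non vides, donc sa limite est non vide, et un élément de cette limite est exactement une famille cohérente $(y_n)$ avec $y_1=x$ et $y_n=my_{mn}$ pour tous $m,n$, d'où $x\in B_{\div}$ d'après la caractérisation rappelée en début de section. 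Le texte procède autrement~: il écrit $B_{\rm tors}=(\bigoplus_p F_p)\oplus D$ avec $D$ divisible et chaque $F_p$ fini, scinde $D$, puis travaille dans $B/D$ premier par premier, en utilisant l'absence de torsion de $B/B_{\rm tors}$ et l'exposant fini de $F_p$ pour corriger les témoins de $p$-divisibilité en une véritable chaîne cohérente, avant de conclure par l'intersection des sous-groupes $p$-divisibles maximaux. Votre argument est plus court et n'utilise que la finitude de chaque $_n B$, sans recourir à la structure des groupes de torsion de type cofini, au prix d'invoquer le fait standard (König ou Tychonoff) qu'une limite projective filtrante d'ensembles finis non vides est non vide, ainsi que la caractérisation de $B_{\div}$ par familles cohérentes --- deux faits que le texte a déjà mis en place. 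L'approche du texte est plus explicite et élémentaire, et fait apparaître au passage le rôle du facteur divisible $D$; les deux sont valables.
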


\begin{proof}
 Écrivons $B_{\text{tors}} = (\bigoplus_{p} F_p) \oplus D$ où $p$ décrit l'ensemble des nombres premiers, $F_p$ est un $p$-groupe abélien fini pour chaque $p$ et $D$ est divisible. On a des suites exactes:
\begin{gather} \label{e1}
 0 \rightarrow D \rightarrow B \rightarrow B/D \rightarrow 0,\\
 \label{e2}
 0 \rightarrow \bigoplus_p F_p \rightarrow B/D \rightarrow B/B_{\text{tors}} \rightarrow 0.
 \end{gather}
La suite (\ref{e1}) est scindée car $D$ est divisible. Il suffit donc de démontrer que $(B/D)_{\infty \text{-div}} = (B/D)_{\text{div}}$. Soient $x\in (B/D)_{\infty \text{-div}} $ et $p$ un nombre premier. Pour chaque $n\geq 1$, considérons $x_n  \in B/D$ tel que $x=p^nx_n$. Notons $y$ (resp. $y_n$) l'image de $x$ (resp. $x_n$) dans $B/B_{\text{tors}}$. On a alors $y=p^ny_n$ pour $n\geq 1$, et donc $p^{n}(py_{n+1}-y_n)=0$. Comme $B/B_{\text{tors}}$ n'a pas de torsion, on déduit que $py_{n+1}=y_n$. Par conséquent, pour $n\geq 1$, on a $px_{n+1}=x_n+t_n$ pour un certain $t_n \in \bigoplus_p F_p$. En remarquant que $p^nt_n=0$, on obtient que $t_n\in F_p$. Soit $s\geq 1$ tel que $p^sF_p=0$. On a alors $p^{s+1}x_{n+1}=p^sx_n$ pour chaque $n\geq 1$. En posant $z_n=p^sx_{s+n-1}$, on obtient que $z_1=x$ et $pz_{n+1}=z_n$ pour tout $n\geq 1$. Cela pouvant être fait pour chaque premier $p$, on déduit que $x\in (B/D)_{\text{div}}$ (car $x$ est dans 
le sous-groupe $p$-divisible maximal de $B/D$ pour tout $p$ premier), 
ce qui achève la preuve.
\end{proof}

Pour tout groupe ab\'elien $A$,
on pose $A_{\wedge}:=\varprojlim_{n >0} (A/nA)$. Si tous les
$A/nA$ sont finis, alors $A_{\wedge}$ n'est autre que le compl\'et\'e
profini $A^{\wedge}$ de $A$. De plus, on a $A_{\wedge}=A$ si $A$ est 
profini.

\begin{lemma} \label{completion}
Soit $C$ un groupe ab\'elien profini. Soient $A$ et $B$ des groupes 
ab\'eliens. 

\smallskip

a) Soit $f : B \to C$ un morphisme tel que le morphisme induit 
$B_{\wedge} \to C$ soit surjectif. Alors $f(B)$ est dense dans 
$C$.

\smallskip

b) Soit
$$A \stackrel{i}{\to} B \to C$$
un complexe de groupes ab\'eliens avec $C$ profini. On suppose que 
le complexe associ\'e 
$$A_{\wedge} \to B_{\wedge} \to C$$
est une suite exacte. Soit $E$ le quotient de $B$ par $i(A)$. 
Alors le noyau de l'application induite $u : E \to C$ est 
$E_{\infty-\div}$.
\end{lemma}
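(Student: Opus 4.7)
The plan is to reduce both parts to a single structural observation about $C$: since $C$ is profinite, the open subgroups of finite index form a neighborhood basis of $0$, and each such subgroup $U$ contains $nC$ for some integer $n\geq 1$ (namely any exponent of the finite quotient $C/U$). In particular $C$ is Hausdorff and $C_{\infty-\div}=\bigcap_n nC$ vanishes, since any element of $nC$ dies in $C/U$ for every such $U$.

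For part a), I would reformulate density of $f(B)$ in $C$ as the surjectivity of $B\to C/U$ for every open subgroup $U$ of finite index. Choosing $n$ with $nC\subset U$, the map $C\to C/U$ factors through $C/nC$, so it suffices to prove that $B/nB\to C/nC$ is surjective. The hypothesis gives $B_{\wedge}\to C$ surjective; composing with the surjection $C\to C/nC$ yields $B_{\wedge}\to C/nC$ surjective. By the very construction $B_{\wedge}=\varprojlim_m B/mB$, this last map factors through the projection $B_{\wedge}\to B/nB$, so $B/nB\to C/nC$ is surjective, and we conclude.

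For part b), the inclusion $E_{\infty-\div}\subset\ker u$ is immediate from the paragraph above: if $x=ny_n$ in $E$ for every $n$, then $u(x)\in\bigcap_n nC=0$. For the converse, I would take $x\in\ker u$, lift it to $b\in B$ so that $f(b)=0$, and denote by $\hat b$ its image in $B_{\wedge}$. Exactness of the completed complex produces an element $\hat a\in A_{\wedge}$ whose image in $B_{\wedge}$ equals $\hat b$. Unwinding the definition $A_{\wedge}=\varprojlim_m A/mA$, for each fixed $n$ we can choose a lift $a_n\in A$ with $b-i(a_n)\in nB$; writing $b-i(a_n)=nb_n$ and reducing modulo $i(A)$ yields $x=n\bar b_n$ in $E$. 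Hence $x\in nE$ for all $n$, i.e.\ $x\in E_{\infty-\div}$.

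The argument is essentially formal and I do not expect any real obstacle. The only step that warrants careful bookkeeping is the translation in part b) from the abstract statement ``$\hat b$ lies in the image of $A_{\wedge}\to B_{\wedge}$'' into the concrete existence, for each $n$, of an element $a_n\in A$ with $b\equiv i(a_n)\pmod{nB}$; this is precisely what one obtains by projecting the compatible system $\hat a$ onto its $n$-th component and noting that $\bar a_n$ lifts to $A$.
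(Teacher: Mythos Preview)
Your proof is correct and follows essentially the same approach as the paper: for a) you reduce density to surjectivity of $B/nB\to C/nC$ via the observation that every open subgroup of $C$ contains some $nC$, and for b) you use $C_{\infty\text{-div}}=0$ for one inclusion and the projection of the compatible system $\hat a$ to $A/nA$ for the other. The paper's argument is identical in substance and structure.
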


\begin{proof}

a) L'hypoth\`ese implique que pour tout $n >0$, l'application 
$B/nB \to C/nC$ induite par $f$ est surjective. Comme $C$ est profini,
tout sous-groupe ouvert $U$ de $C$ est d'indice fini, donc 
contient $nC$ pour un certain $n >0$. Ceci montre que si $x \in C$, 
alors l'ouvert $x+U$ rencontre $f(B)$ puisqu'on peut \'ecrire
$x=f(y)+x'$ avec $x' \in nC$. Ceci montre que $f(B)$ est dense 
dans $C$.

\smallskip

b) Comme $C$ est profini, il est limite projective des $C/nC$ et 
on a donc $C_{\infty-\div}=0$, ce qui montre que $E_{\infty-\div} \subset 
\ker u$. En sens inverse, soit $x \in \ker u$, qu'on rel\`eve en 
$y \in B$. Soit $n >0$. Alors l'image de $y$ dans $B_{\wedge}$ provient 
de $A_{\wedge}$, ce qui implique que l'image de $y$ dans $B/nB$ provient 
de $A/nA$. Il existe donc $a \in A$ tel que $i(a)=y+nb$ avec $b \in B$. 
Ainsi l'image $x$ de $y$ dans $E=B/i(A)$ est divisible par $n$. 
Finalement $x \in E_{\infty-\div}$.
\end{proof}

\section{Cas général}

Dans cette section, on consid\`ere le corps des fonctions $K$ 
d'une courbe projective lisse 
géométriquement intègre $X$ sur un corps de base $k$ de caractéristique 0. 
Nous allons montrer que sans aucune hypoth\`ese sur $k$, le groupe 
des points rationnels $T(K)$ d'un $K$-tore est discret dans l'espace 
ad\'elique $T(\A_K,S)=\prod_{v \not \in S} ' T(K_v)$, et ce pour 
tout ensemble fini $S$ de points ferm\'es de $X$. On commence par 
le cas du tore d\'eploy\'e $T=\G$.

\smallskip

Soient $U$ un ouvert non vide de $X$ et $S$ l'ensemble fini 
$X-U$. On note $k[U]$ l'anneau des fonctions r\'eguli\`eres sur $U$ 
et $k[U]^*=H^0(U,\G)$ le groupe des fonctions inversibles sur $U$.
Les valuations associ\'ees aux points 
ferm\'es de $U$ induisent une application $${\rm val} : 
\G(A,S)=\prod_{v\in U^{(1)}}' K_v^* \to \bigoplus_{v\in U^{(1)}} 
\mathbb{Z}.$$ On \'equipe $\bigoplus_{v\in U^{(1)}} \mathbb{Z}$ de 
la topologie discr\`ete et $\G(A,S)$ de la topologie de produit restreint 
associ\'ee aux topologies $v$-adiques.
Considérons le diagramme:\\

\centerline{\xymatrix{
K^* \ar[r]^-{\iota}\ar[rd] & \prod_{v\in U^{(1)}}' K_v^* \ar[d]^{{\rm val}}\\
 & \bigoplus_{v\in U^{(1)}} \mathbb{Z}.
}}

\begin{lemma} \label{val}
 Le morphisme $ \iota(K^*) \rightarrow \bigoplus_{v\in U^{(1)}} \mathbb{Z}$ est continu quand on \'equipe $\iota(K^*)$ de la topologie 
induite par celle de $\G(A,S)$.
 \end{lemma}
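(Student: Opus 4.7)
The plan is to deduce the continuity of the restriction $\iota(K^*) \to \bigoplus_{v \in U^{(1)}} \mathbb{Z}$ from the stronger fact that the valuation map ${\rm val}: \G(A,S) \to \bigoplus_{v \in U^{(1)}} \mathbb{Z}$ is itself continuous on the entire restricted product. Indeed, the restriction of a continuous map to a subspace is continuous for the induced topology, so nothing arithmetic about $K^*$ will be needed beyond the fact that it is a subgroup.

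First I would check that ${\rm val}$ is well-defined with image in the direct sum: by the very definition of the restricted product with respect to the family $(\calo_v^*)_{v \in U^{(1)}}$, any $(a_v) \in \prod_{v \in U^{(1)}}' K_v^*$ satisfies $a_v \in \calo_v^*$, and hence $v(a_v) = 0$, outside a finite set of places. Next, since the target carries the discrete topology and ${\rm val}$ is a group homomorphism, continuity reduces to openness of its kernel; but this kernel contains the subgroup $\prod_{v \in U^{(1)}} \calo_v^*$, which is a fundamental open neighborhood of $1$ in the restricted product topology (it is precisely one of the defining basic opens, corresponding to an empty ``bad'' set of places). Hence ${\rm val}$ is continuous on $\G(A,S)$, and the lemma follows by restriction to $\iota(K^*)$. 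I do not expect any real obstacle here: the content of the lemma is entirely encoded in the definition of the restricted product topology, in which the unit subgroups $\calo_v^*$ are open by construction and are exactly the elements of valuation zero at each place.
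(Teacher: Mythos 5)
Your proof is correct and is essentially the paper's own argument: the paper likewise reduces to the continuity of ${\rm val}$ on all of $\prod_{v\in U^{(1)}}' K_v^*$, which holds because its kernel $\prod_{v\in U^{(1)}} \mathcal{O}_v^*$ is an open subgroup of the restricted product and the target is discrete. No gap to report.
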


\begin{proof}
Il suffit de vérifier que le morphisme 
$\prod_{v\in U^{(1)}}' K_v^* \rightarrow \bigoplus_{v\in U^{(1)}} \mathbb{Z}$
induit par les valuations est continu. Cela découle immédiatement de la définition de la topologie produit restreint car le noyau 
$\prod_{v\in U^{(1)}} \mathcal{O}_v^*$ de ${\rm val}$ est un sous-groupe 
ouvert de $\prod_{v\in U^{(1)}}' K_v^*$.
\end{proof}

\begin{lemma}\label{unit}
Soit $v\in U^{(1)}$. Soit $\iota_v$ l'injection naturelle de 
$k[U]^*$ dans $K_v^*$. Alors 
$\iota_v (k[U]^*)$ est fermé discret dans $\calo_v ^*$ 
(et donc aussi dans $K_v^*$). 
\end{lemma}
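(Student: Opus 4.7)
My plan is as follows. Since $v \in U^{(1)}$, every invertible regular function on $U$ is a unit at $v$, so $\iota_v(k[U]^*) \subseteq \calo_v^*$. Because $\calo_v^*$ is Hausdorff, any discrete subgroup is automatically closed, so it suffices to prove discreteness; and by translation invariance in the topological group $\calo_v^*$, this reduces to exhibiting an integer $N \geq 1$ such that
$$k[U]^* \cap (1 + \mathfrak{m}_v^N) = \{1\}.$$

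For the structural setup, the divisor map $\mathrm{div}\colon K^* \to \bigoplus_{w \in X^{(1)}} \Z$ restricts to a homomorphism $k[U]^* \to \bigoplus_{w \in S} \Z$, since elements of $k[U]^*$ are units at every $w \in U^{(1)}$; its kernel is the group of rational functions with trivial divisor, which equals $k^*$ by geometric integrality of the projective variety $X$. Consequently $k[U]^*/k^*$ is finitely generated free abelian. Moreover, the inclusion $k \hookrightarrow \kappa(v)$ forces $k \cap \mathfrak{m}_v = 0$, whence $k^* \cap (1 + \mathfrak{m}_v) = \{1\}$. Setting $H := k[U]^* \cap (1 + \mathfrak{m}_v)$, the composition $H \hookrightarrow k[U]^* \twoheadrightarrow k[U]^*/k^*$ is therefore injective, and $H$ is itself finitely generated free abelian of finite rank.

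The key arithmetic ingredient uses that $\mathrm{char}(\kappa(v)) = 0$ (since $\kappa(v)$ is a finite extension of $k$): for every $x \in 1 + \mathfrak{m}_v$ with $x \neq 1$ and every nonzero integer $a$, one has
$$v(x^a - 1) = v(x - 1).$$
Indeed, for $a \geq 1$ the factorization $x^a - 1 = (x - 1)(x^{a-1} + \cdots + 1)$ applies, and the second factor reduces modulo $\mathfrak{m}_v$ to $a \in \kappa(v)^*$, hence is a unit of $\calo_v$; the case $a \leq -1$ follows because $x$ is itself a unit.

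The main obstacle is assembling these pieces. I would consider the descending chain $H_N := H \cap (1 + \mathfrak{m}_v^N)$, whose intersection is trivial since $\bigcap_N \mathfrak{m}_v^N = 0$. Each $H_N$ is finitely generated free as a subgroup of the finitely generated free group $H$, so the ranks $\mathrm{rk}(H_N)$ form a decreasing sequence of non-negative integers that stabilizes at some $r \geq 0$. If $r = 0$, then for $N$ large enough $H_N$ is torsion in the torsion-free group $H$, hence trivial, giving the required $N$. To exclude the case $r \geq 1$, pick $N_1$ where the stable rank is attained and choose $x \in H_{N_1}$ of infinite order; for every $N \geq N_1$, the inclusion $H_N \subseteq H_{N_1}$ between rank-$r$ free abelian groups has finite index, so there is a least positive integer $a_N$ with $x^{a_N} \in H_N$. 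These $a_N$ must tend to infinity, for otherwise a fixed nonzero power of $x$ would lie in $\bigcap_N H_N = \{1\}$, contradicting the infinite order of $x$. But the key identity then gives $v(x^{a_N} - 1) = v(x - 1)$, a finite constant, while $x^{a_N} \in 1 + \mathfrak{m}_v^N$ forces $v(x^{a_N} - 1) \geq N$; letting $N \to \infty$ yields a contradiction. Hence $r = 0$ and discreteness is proved.
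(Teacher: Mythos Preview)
Your proof is correct, and it takes a genuinely different route from the paper's. Both arguments begin identically: using the divisor map to see that $k[U]^*/k^*$ is a finitely generated free abelian group, and observing that $k^*\cap(1+\mathfrak m_v)=\{1\}$ so that $H:=k[U]^*\cap(1+\mathfrak m_v)$ is itself free of finite rank. The divergence is in how one shows that a finitely generated subgroup of the principal units is discrete. The paper passes to the additive side, invoking the isomorphism $\mathcal O_v^1\cong \kappa(v)[[t]]$ and proving a separate sub-lemma: any finitely generated subgroup of the additive group of a power-series ring over a characteristic-zero field is closed and discrete, via a descending chain of affine $\mathbb Q$-subspaces of $\mathbb Q^m$ that must stabilise. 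You stay on the multiplicative side and exploit instead the valuation identity $v(x^a-1)=v(x-1)$ for $x\in 1+\mathfrak m_v$ and $a\in\mathbb Z\setminus\{0\}$, then run a rank-stabilisation argument on the chain $H_N=H\cap(1+\mathfrak m_v^N)$. Both arguments use characteristic zero in an essential way---yours through the fact that the image of $a$ in $\kappa(v)$ is a unit, the paper's through working over the prime field $\mathbb Q$. Your approach is slightly more self-contained in that it avoids the logarithm/Cohen-structure identification; the paper's sub-lemma, on the other hand, is a clean standalone statement about power-series rings that may be of independent use.
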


\begin{proof}
Le groupe $k[U]^*/k^*$ est de type fini car on a une suite
exacte 
$$1 \to k^* \to k[U]^* \stackrel{\oplus {\rm val}_w}{\to} 
\bigoplus_{w \in S} \Z.$$

Du coup, si on pose $\mathcal{O}_v^1 := \{x \in \mathcal{O}_v | v(x-1)>0\}$ 
et si on note $k(v)$ le corps r\'esiduel de $\calo_v$, 
alors dans le groupe $K_v^* \cong \mathbb{Z} \times k(v)^* \times \mathcal{O}_v^1$, on a~:
$$\iota_v (k[U]^*) \subset k(v)^* \times \mathcal{O}_v^1, $$
et il suffit de d\'emontrer que $\iota_v (k[U]^*) \cap \mathcal{O}_v^1$
est discret vu que $k(v)^*$ est discret. Comme $\iota_v (k[U]^*) \cap \mathcal{O}_v^1$ s'injecte dans le groupe de type fini $\iota_v (k[U]^*/k^*)$, il
est lui m\^eme de type fini.
Étant donné que $\mathcal{O}_v^1 \cong k[[t]]$, il suffit de démontrer le lemme qui suit~: 

\begin{lemma}
Soit $H$ un sous-groupe de type fini du groupe additif 
$k[[t]]$. Alors $H$ est fermé discret dans $k[[t]]$.
\end{lemma}

Notons que c'est pr\'ecis\'ement ce r\'esultat qui est clairement faux 
si on remplace $k[[t]]$ par l'anneau des entiers d'un corps complet 
pour une valuation discr\`ete
d'in\'egale caract\'eristique comme $\Q_p$~: par exemple, 
le sous-groupe de type fini $\Z$ n'est pas ferm\'e dans $\Z_p$.

\smallskip

{\it D\'emonstration du lemme.}
Soit $(z_1,...,z_m)$ une famille génératrice de $H$ (on peut m\^eme 
supposer que c'est une base puisque $H$ est libre, \'etant de type
fini et sans torsion).
Soit $(x_n)_n$ une suite à valeurs dans $H$ qui converge vers un certain $y \in k[[t]]$. Écrivons: $$x_n = a_1^{(n)}z_1+...+a_m^{(n)}z_m$$ avec $a_1^{(n)},..., a_m^{(n)}$ entiers. Écrivons aussi: 
$$z_j = \sum_{i\geq 0} z_i^{(j)} t^i,\;\;\;
y=\sum_{i\geq 0} y_it^i.$$

On a donc 
$$x_n=\sum_{i \geq 0} (\sum_{r=1} ^m a_r ^{(n)} z_i^{(r)} )t^i.$$
Par d\'efinition de la topologie sur $k[[t]]$, la convergence 
de la suite $(x_n)$ vers $y$ signifie que 
pour chaque $i\geq 0$, il existe $n_i\geq 1$ tel que, 
pour tout $n\geq n_i$, on a:
$$y_i = a_1^{(n)}z_i^{(1)} + ... + a_m^{(n)}z_i^{(m)}.$$
Notons:
$$M_s := \{(b_1,...,b_m) \in \mathbb{Q}^m | \forall i\in \{0,1,...,s\}, y_i = b_1z_i^{(1)} + ... + b_mz_i^{(m)}\},$$
de sorte que $(a_1^{(n)},..., a_m^{(n)}) \in M_{s}$ pour $n\geq \max \{ n_0,...,n_s\}$. On remarque que les $M_s$ forment une suite décroissante de sous-espaces affines non vides 
de $\mathbb{Q}^m$. Par conséquent, si $M_{\infty} := \{(b_1,...,b_m) \in \mathbb{Q}^m |  y = b_1z_1 + ... + b_mz_m\}$, il existe $s_0 \geq 0$ tel que:
$$M_{s_0} = M_{s_0+1} = ... = M_{\infty}.$$
En notant $N = \max \{ n_0,...,n_{s_0}\}$, on a donc $(a_1^{(n)},..., a_m^{(n)}) \in M_{\infty}$ pour $n\geq N$. La suite $(x_n)$ est donc stationnaire, ce qui achève la preuve.
\end{proof}

\begin{proposition}\label{g_m}
L'image $\iota(K^*)$ de $K^*$  
dans $\prod_{v\in U^{(1)}}' K_v^*$ en est un sous-groupe discret (donc 
ferm\'e).
    \end{proposition}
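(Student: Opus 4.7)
The plan is to combine the two preceding lemmas via the valuation sequence, reducing the problem to local discreteness of units at a single place.

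First I would exploit Lemma~\ref{val}: since the kernel $\prod_{v \in U^{(1)}} \mathcal{O}_v^*$ of the valuation map is an open subgroup of $\prod'_{v \in U^{(1)}} K_v^*$ (it is the ``everywhere integral'' piece of the restricted product), and since $K^* \cap \prod_{v \in U^{(1)}} \mathcal{O}_v^* = k[U]^*$ inside $\prod'_v K_v^*$ (by the standard identity $\bigcap_{v \in U^{(1)}} \mathcal{O}_v^* = H^0(U,\G_m) = k[U]^*$ for a smooth curve), the problem reduces to showing that $\iota(k[U]^*)$ is discrete in $\prod'_v K_v^*$.

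To handle this, I would pick one place $v_0 \in U^{(1)}$ (such a place exists because $U$ is a non-empty open of a curve, hence has closed points). By Lemma~\ref{unit}, $\iota_{v_0}(k[U]^*)$ is discrete in $K_{v_0}^*$, so one can choose an open neighborhood $W \subset K_{v_0}^*$ of $1$ with $W \cap \iota_{v_0}(k[U]^*) = \{1\}$. The basic open set
\[
V := W \times \prod_{v \in U^{(1)} \setminus \{v_0\}} \mathcal{O}_v^*
\]
is then a neighborhood of $1$ in $\prod'_v K_v^*$. Any $x \in K^*$ with $\iota(x) \in V$ is a unit at every $v \in U^{(1)}$ so lies in $k[U]^*$; its image in $K_{v_0}^*$ lies in $W \cap \iota_{v_0}(k[U]^*) = \{1\}$, and since $k[U]^* \hookrightarrow K \hookrightarrow K_{v_0}$ is injective, we get $x = 1$. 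Hence $\iota(K^*) \cap V = \{1\}$, which gives discreteness.

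I do not expect any serious obstacle: the only mildly subtle point is the identification $K^* \cap \prod_v \mathcal{O}_v^* = k[U]^*$, which is just the fact that on a smooth (or normal) variety a rational function which is regular and nonvanishing at every codimension-one point is a global unit. Everything else is a direct packaging of Lemmas~\ref{val} and~\ref{unit} with the definition of the restricted product topology.
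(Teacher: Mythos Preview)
Your argument is correct and matches the paper's approach: both reduce via the open subgroup $\prod_{v} \mathcal{O}_v^*$ to the discreteness of $\iota(k[U]^*)$, then invoke Lemma~\ref{unit} at a single chosen place to finish. The only tweak needed is to choose $W \subset \mathcal{O}_{v_0}^*$ (harmless, since $\mathcal{O}_{v_0}^*$ is open in $K_{v_0}^*$ and Lemma~\ref{unit} already gives discreteness inside $\mathcal{O}_{v_0}^*$), so that membership in $V$ really forces $x$ to be a unit at $v_0$ as well.
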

    
    \begin{proof}

D'apr\`es le lemme~\ref{val}, le sous-groupe $\iota(k[U]^*)$ est un voisinage 
ouvert de $\{1 \}$ dans $\iota(K^*)$. Choisissons $v \in X^{(1)}$. 
D'apr\`es le lemme~ \ref{unit}, $\iota_v(k[U]^*)$ est discret dans 
$K_v^*$, donc a fortiori $\iota(k[U]^*)$ est un sous-groupe 
discret de $\prod_{v\in U^{(1)}}' K_v^*$ (la topologie induite 
par $\prod_{v\in U^{(1)}}' K_v^*$ est au moins 
aussi fine que celle induite par $K_v^* $).
Comme $\iota(k[U]^*)$ est un voisinage ouvert de $\{1 \}$ dans 
$\iota(K^*)$, cela montre que $\{ 1\}$ est ouvert dans $\iota(K^*)$,
autrement dit $\iota(K^*)$ est un groupe discret.

    \end{proof}

On va maintenant traiter le cas d'un tore quelconque. Rappelons 
qu'un $K$-tore $T$ est {\it quasi-trivial} si $\hat T$ est un 
module galoisien de permutation (i.e. il existe une base 
de $\hat T$ en tant que groupe ab\'elien qui est stable par l'action 
de Galois). De fa\c con \'equivalente, cela signifie que 
$T$ est isomorphe \`a un produit de tores de la forme 
$R_{E_i/K} \G$ (o\`u $R_{E_i/K}$ d\'esigne la restriction de Weil de  
$E_i$ \`a $K$, $E_i$ \'etant une extension finie de corps de $K$). 
Tout tore se plonge dans 
un tore quasi-trivial, car tout module galoisien de type fini est 
quotient d'un module de permutation.
    
    \begin{corollary}\label{qt}
    Soit $R$ un $K$-tore quasi-trivial. L'image de l'injection diagonale:
    $$R(K)\rightarrow \prod_{v\in U^{(1)}} ' R(K_v)$$
     est fermée discrète dans $\prod_{v\in U^{(1)}} ' R(K_v)$.
    \end{corollary}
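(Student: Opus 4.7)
The plan is to reduce the statement to Proposition~\ref{g_m} applied to an appropriate finite extension of $K$, using the definition of a quasi-trivial torus and the functorial properties of Weil restriction.

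Since $R$ is quasi-trivial, I would first invoke the description recalled just before the corollary and write $R \cong \prod_{i=1}^{r} R_{E_i/K}\G$ for finite field extensions $E_i/K$. A finite product of closed discrete subgroups in a product of Hausdorff topological groups is again closed and discrete, so I reduce at once to the case of a single factor $R = R_{E/K}\G$ with $E/K$ a finite extension.

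Next I would unwind what the adelic statement becomes in this case. For each closed point $v \in U^{(1)}$, the standard identity for Weil restriction gives a canonical identification $R(K_v) = \G(E \otimes_K K_v) = \prod_{w \mid v} E_w^*$, where $w$ runs over the finitely many places of $E$ above $v$; moreover $R(K) = E^*$. The key point is that, picking a smooth integral model of $R$ over $U$ coming from the flat model $\operatorname{Spec} \mathcal{O}_E \otimes_{\mathcal{O}_K}(\cdot)$ of $E$ (equivalently, by using the normalisation $Y \to X$ of $X$ in $E$), the integral points $R(\mathcal{O}_v)$ correspond under this identification to $\prod_{w \mid v} \mathcal{O}_w^*$. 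Consequently the restricted-product topology on $\prod_{v \in U^{(1)}}' R(K_v)$ matches exactly the restricted-product topology on $\prod_{w \in V^{(1)}}' E_w^*$, where $V \subset Y$ is the preimage of $U$ (which is a non-empty open subset of $Y$ since $Y \to X$ is finite and surjective).

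Finally, the algebraic closure $k'$ of $k$ in $E$ is a finite extension of $k$, still of characteristic zero, and $Y$ is a smooth projective geometrically integral $k'$-curve with function field $E$. I would then apply Proposition~\ref{g_m} to the couple $(E, V)$ in place of $(K, U)$: this yields that the diagonal image of $E^*$ is discrete (hence closed) in $\prod_{w \in V^{(1)}}' E_w^*$, which via the identification above gives exactly the desired statement for $R(K) \hookrightarrow \prod_{v \in U^{(1)}}' R(K_v)$.

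The main technical point is the second paragraph: checking that Weil restriction is compatible with the restricted-product topologies, i.e.\ that the canonical bijection $R(K_v) \simeq \prod_{w \mid v} E_w^*$ is a homeomorphism and sends the chosen integral-point subgroups to the product of unit groups. Once that identification is in place, the corollary is a direct transcription of Proposition~\ref{g_m} to the curve $Y$.
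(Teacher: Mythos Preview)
Your proof is correct and follows essentially the same route as the paper: write $R$ as a product of Weil restrictions $R_{E_i/K}\G$, identify the diagonal map with $\prod_i E_i^* \hookrightarrow \prod_i \prod'_{w} E_{i,w}^*$ via Shapiro, and then apply Proposition~\ref{g_m} to each curve $X_i$ (your $Y$) with function field $E_i$. You are a bit more explicit than the paper about why the restricted-product topologies match and about passing to the algebraic closure $k'$ of $k$ in $E$ to ensure geometric integrality of $Y$, but the argument is the same.
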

    
    \begin{proof}
    Écrivons $R=R_{E/K}(\mathbb{G}_m)$ avec $E = E_1\times ... \times E_r$ et $E_1,...,E_r$ des extensions finies de $K$. Soient $X_1$,...,$X_r$ des courbes projectives lisses sur $K$ telles que $K(X_i)=E_i$ pour chaque $i$. Les extensions $E_i/K$ induisent bien sûr des morphismes $\pi_i: X_i \rightarrow X$. Notons $U_i = \pi_i^{-1}(U)$ pour chaque $i$. L'injection naturelle:
    $$R(K)\rightarrow \prod_{v\in U^{(1)}} ' R(K_v)$$
    s'identifie alors, via le lemme de Shapiro,  à l'injection naturelle:
    $$\prod_i E_i^*\rightarrow \prod_i \prod_{w\in U_i^{(1)}} ' E_{i,w}^*.$$
    D'après la proposition \ref{g_m}, son image est bien fermée discrète.
    \end{proof}
    
    \begin{theorem} \label{gentheo}
    Soit $T$ un $K$-tore. Considérons l'injection diagonale:
    $$\iota_T: T(K)\rightarrow \prod_{v\in U^{(1)}} ' T(K_v).$$
    Son image est fermée discrète dans $\prod_{v\in U^{(1)}} ' T(K_v)$.
    \end{theorem}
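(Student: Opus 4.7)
The idea is to reduce to the quasi-trivial case already treated in Corollary~\ref{qt} by embedding $T$ into a quasi-trivial torus. Since the character module $\hat T$ is a finitely generated Galois module, it is a quotient of a permutation module; dualizing yields a closed immersion $T \hookrightarrow R$ of $K$-tori with $R$ quasi-trivial. Spreading out, we may choose compatible smooth models $\mathcal{T} \hookrightarrow \mathcal{R}$ over a cofinite open $U' \subset U$, so that $\mathcal{T}(\mathcal{O}_v) = T(K_v) \cap \mathcal{R}(\mathcal{O}_v)$ for every $v \in {U'}^{(1)}$. It follows readily that the induced map on adelic spaces
\[ \iota : \prod_{v\in U^{(1)}} {}' T(K_v) \longrightarrow \prod_{v\in U^{(1)}} {}' R(K_v) \]
is continuous and injective.

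Next, because $T \hookrightarrow R$ is a closed immersion of $K$-schemes, for any closed point $v$ of $X$ one has $T(K) = R(K) \cap T(K_v)$ inside $R(K_v)$. From this one obtains the set-theoretic identity $\iota^{-1}(\iota_R(R(K))) = \iota_T(T(K))$ as subsets of $\prod_{v\in U^{(1)}} {}' T(K_v)$, where $\iota_R$ (resp.\ $\iota_T$) denotes the diagonal embedding of $R(K)$ (resp.\ $T(K)$). By Corollary~\ref{qt}, $\iota_R(R(K))$ is a closed discrete subgroup of $\prod_{v\in U^{(1)}} {}' R(K_v)$, and the continuity of $\iota$ transfers both properties to $\iota_T(T(K))$: closedness is automatic from the preimage formula, while discreteness follows by pulling back an open neighborhood $W$ of the neutral element in the target with $W \cap \iota_R(R(K)) = \{e\}$.

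The essential content lies in the reduction to a quasi-trivial torus via the closed embedding $T \hookrightarrow R$; the topological argument that follows is almost formal and uses nothing more than the continuity of $\iota$. I do not foresee any serious obstacle.
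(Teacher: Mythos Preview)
Your proof is correct and follows essentially the same approach as the paper: embed $T$ into a quasi-trivial torus $R$, then use the commutative square relating $\iota_T$ and $\iota_R$ together with Corollary~\ref{qt} to pull back closedness and discreteness along the continuous injection $\prod'_{v} T(K_v)\hookrightarrow \prod'_{v} R(K_v)$. Your write-up is somewhat more detailed than the paper's (you spell out the spreading-out argument for continuity and the preimage identity $\iota^{-1}(\iota_R(R(K)))=\iota_T(T(K))$), but the underlying argument is identical.
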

    
    \begin{proof}
    Soit $f: T \rightarrow R$ un $K$-morphisme injectif vers un 
tore quasi-trivial $R$. On a un diagramme commutatif à lignes exactes:\\
\centerline{\xymatrix{
0 \ar[r] & T(K)\ar[d]^{\iota_T} \ar[r] & R(K)\ar[d]^{\iota_R}\\
0 \ar[r] & \prod_{v\in U^{(1)}} ' T(K_v) \ar[r] & \prod_{v\in U^{(1)}} ' R(K_v)
}}

\smallskip

L'image de $\iota_R$ étant fermée discrète dans $\prod_{v\in U^{(1)}}' R(K_v)$ d'après le lemme \ref{qt}, il en est de même de l'image de $\iota_T$ dans $\prod_{v\in U^{(1)}}' T(K_v)$.
    \end{proof}
    
    \section{Cas {$k$} algébriquement clos}

Soit $K$ le corps des fonctions
d'une courbe projective et lisse $X$ sur un corps $k$
de caract\'eristique z\'ero. Fixons quelques notations, qui 
seront en vigueur jusqu'\`a la fin de l'article~:

\begin{notation}
Lorsque $T$ est un $K$-tore et $S$ une partie finie de $X^{(1)}$, on note:
$$A(S,T):= \left( \prod_{v\in U^{(1)}} ' T(K_v) \right) /T(K)$$
où $U=X \setminus S$ et on a identifi\'e $T(K)$ avec son image 
diagonale dans $\prod_{v\in U^{(1)}} ' T(K_v)$.
\end{notation}

\begin{notation}
Soit $T$ un $K$-tore.
Soit $\mathcal{T}$ un tore sur un ouvert non vide $U$ de $X$ étendant $T$. 

 On pose~: 
\begin{gather*}
   \mathbb{P}^0(T) := \prod_{v\in X^{(1)}}' T(K_v),\\
   \end{gather*}
(le produit restreint \'etant relatif aux ${\mathcal T}(\calo_v)$)
 et on munit $\mathbb{P}^0(T)$ de sa topologie de produit restreint.

Pour tout module galoisien $M$ sur $K$ et tout $i>0$, on pose~:
$$\Sha^i(K,M):=\ker [H^i(K,M) \to \prod_{v \in X^{(1)}} H^i(K_v,M)].$$

\end{notation}

\smallskip

Dans cette section, le corps $k$ est suppos\'e
alg\'ebriquement clos. Nous allons voir que dans ce cas, 
on peut d\'eterminer beaucoup plus pr\'ecis\'ement $A(S,T)$
(qui est aussi le d\'efaut d'approximation forte 
d'apr\`es le th\'eor\`eme~\ref{gentheo}).

\subsection{Structure de $A(S,T)$}
    
\begin{lemma}\label{qt2}
    Soit $S$ une partie finie de $X^{(1)}$. Notons $U = X\setminus S$ et considérons le groupe:
    $$A(S,\G):= \left( \prod_{v\in U^{(1)}} ' 
K_v^* \right) /K^*.$$
    \begin{itemize}
 \item[(i)]   Si $S = \emptyset$, le groupe $\overline{A(S,\G)}$ est isomorphe à $\mathbb{Z}$.
  \item[(ii)]  Si $S\neq \emptyset$, le groupe $A(S,\G)$ est divisible.
    \end{itemize}
\end{lemma}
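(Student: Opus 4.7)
The plan is to relate $A(S,\G)$ to $\Pic(U)$ via the valuation map, then exploit the fact that when $k$ is algebraically closed of characteristic zero, both the local unit groups $\calo_v^*$ and the group $\Pic^0(X)$ are divisible. The key exact sequence to establish first is
$$0 \to \prod_{v\in U^{(1)}} \calo_v^*\big/k[U]^* \to A(S,\G) \to \Pic(U) \to 0. \qquad (*)$$
To obtain $(*)$, I would combine the short exact sequence
$1 \to \prod_v \calo_v^* \to \prod_v' K_v^* \xrightarrow{\mathrm{val}} \bigoplus_v \Z \to 0$
with the Picard exact sequence
$1 \to k[U]^* \to K^* \to \bigoplus_{v\in U^{(1)}} \Z \to \Pic(U) \to 0,$
and run a direct diagram chase: the map $A(S,\G)\to \Pic(U)$ induced by valuations is surjective, and its kernel is generated by classes of id\`eles unramified at every place, yielding the image of $\prod_v\calo_v^*/k[U]^*$.

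Next I would analyze the divisibility of the terms in $(*)$. Since $k$ is algebraically closed, every closed point of $X$ has residue field $k$ and $\calo_v \cong k[[t]]$, so $\calo_v^* \cong k^* \times (1+tk[[t]])$. The factor $k^*$ is divisible because $k$ is algebraically closed, and $1+tk[[t]]$ is even uniquely divisible because the formal logarithm identifies it with the $\Q$-vector space $tk[[t]]$ (using $\mathrm{char}(k)=0$). Hence $\prod_v \calo_v^*$ is divisible, and so is the left-hand term of $(*)$.

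For part (ii), when $S\neq\emptyset$, I use that each point of $S$ has degree $1$, so the natural surjection $\Pic(X)\to \Pic(U)=\Pic(X)/\langle S\rangle$ factors through $\Pic^0(X)$. Since $\Pic^0(X)$ is the group of $k$-points of the Jacobian of $X$, an abelian variety over an algebraically closed field of characteristic zero, it is divisible. Therefore $\Pic(U)$ is divisible, and $A(S,\G)$ is an extension of divisible groups, hence divisible.

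For part (i), with $S=\emptyset$ and $U=X$, the degree map gives $0\to \Pic^0(X)\to \Pic(X)\to \Z\to 0$. Let $D'$ be the preimage in $A(\emptyset,\G)$ of $\Pic^0(X)$. By $(*)$, $D'$ is an extension of $\Pic^0(X)$ by $\prod_v\calo_v^*/k^*$, both divisible, hence $D'$ is divisible. The quotient $A(\emptyset,\G)/D'\cong \Z$ has trivial maximal divisible subgroup, so $D' = A(\emptyset,\G)_{\div}$ and $\overline{A(\emptyset,\G)} \cong \Z$. The main delicate point is the derivation of $(*)$, where one must keep careful track of which \'el\'ements of $K^*$ are units at all $v\in U^{(1)}$ (namely $k[U]^*$); the rest of the argument is a clean consequence of the algebraic closure of $k$ and the characteristic zero hypothesis.
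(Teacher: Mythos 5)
Your proposal is correct and follows essentially the same route as the paper: the same exact sequence $0 \to \bigl(\prod_{v\in U^{(1)}} \calo_v^*\bigr)/k[U]^* \to A(S,\G) \to \pic U \to 0$, divisibility of the local units (you use the formal logarithm where the paper invokes Hensel's lemma, both valid in characteristic zero with algebraically closed residue field), and the divisibility of $J(k)=\pic^0 X$ to control $\pic U$. The only cosmetic difference is that in (i) you track the maximal divisible subgroup through the extension explicitly rather than passing directly to the bar-quotient, which amounts to the same computation.
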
      
    
\begin{proof} En consid\'erant la valuation en chaque $v \in U^{(1)}$, on 
d\'efinit une fl\`eche surjective $\prod_{v\in U^{(1)}} ' K_v^* 
\to {\rm Div} \, U$, induisant une fl\`eche surjective 
$\prod_{v\in U^{(1)}} ' K_v^* \to \pic U$ dont le noyau est le 
sous-groupe $K^* \prod_{v\in U^{(1)}} \mathcal{O}_v^*$. Comme 
par ailleurs on a $K^* \cap \prod_{v\in U^{(1)}} \mathcal{O}_v^*=
k[U]^*$, on obtient une suite exacte:
\begin{equation} \label{gmsuite}
0 \rightarrow \left( \prod_{v\in U^{(1)}} \mathcal{O}_v^*\right) /k[U]^* \rightarrow A(S,\G) \rightarrow \text{Pic}(U) \rightarrow 0.
\end{equation}
Pour chaque $v\in U^{(1)}$, le groupe $\mathcal{O}_v^*$ est divisible
par le lemme de Hensel car le corps r\'esiduel de $\calo_v$ est 
alg\'ebriquement clos de caract\'eristique z\'ero. 

\smallskip

Soit $J$ la jacobienne de la courbe $X$. On a alors une suite exacte 
scind\'ee par le choix de tout point ferm\'e de $X$ (qui est un 
$k$-point puisque $k$ est alg\'ebriquement clos)~:
$$0 \to \pic^0 X \simeq J(k) \to \pic X \to \Z \to 0,$$
et $\pic U$ est le quotient de $\pic X$ par l'image de $\bigoplus_{v \in S} 
\Z. v$. Comme $J(k)$ est divisible (toujours parce que 
$k$ est alg\'ebriquement clos), il en r\'esulte que 
$\overline{\text{Pic}(U)}$ est isomorphe à 
$\mathbb{Z}$ si $S=\emptyset$ et il est trivial sinon.
On en déduit la m\^eme propri\'et\'e pour $\overline{A(S,\G)}$ 
via la suite exacte (\ref{gmsuite}), dont on a vu que le groupe de gauche 
est divisible. 

\end{proof}    

\begin{remarque} \label{quasirem}
Soit $T$ un tore quasi-trivial sur $K$. Le lemme \ref{qt2} impose que $A(S,T)$ est divisible si $S\neq \emptyset$ et que:
$$\overline{A(\emptyset,T)} \cong \mathbb{Z}^{r_T}$$
où $r_T=\text{rg}\, H^0(K,\hat{T})$.
\end{remarque}
    
    \begin{theorem} \label{rangtheo}
    Supposons que $k$ est algébriquement clos. Soit $T$ un $K$-tore.
     \begin{itemize}
    \item[(i)] Le groupe $\overline{A(\emptyset,T)}$ est un groupe abélien de type fini de rang $\text{rg}\,( H^0(K,\hat{T}))$.
       \item[(ii)] Soit $S$ une partie finie non vide de $X^{(1)}$. Alors le groupe ${A(S,T)}$ est divisible.
    \end{itemize}
    \end{theorem}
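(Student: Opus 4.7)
Le plan est de dévisser $T$ via une résolution par des tores quasi-triviaux. Le module galoisien $\hat T$ s'insère dans une suite exacte $0\to\hat T\to P_1\to P_2\to 0$ avec $P_1,P_2$ modules de permutation sur $G_K$ (classique, via Artin et saturation, quitte à remplacer $T$ par $T\times R$ pour un $R$ quasi-trivial adéquat), d'où par dualisation une suite exacte de $K$-tores
$$1\to R_2\to R_1\to T\to 1$$
avec $R_1,R_2$ quasi-triviaux. Le théorème 90 de Hilbert couplé au lemme de Shapiro donne $H^1(K,R_2)=H^1(K_v,R_2)=0$ pour tout $v$, ainsi que l'annulation analogue sur un modèle entier lisse de $R_2$ pour presque tout $v$. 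Il s'ensuit des suites exactes courtes de points rationnels et adéliques, à flèches diagonales injectives par le théorème~\ref{gentheo}; le lemme du serpent fournit alors
$$0\to A(S,R_2)\to A(S,R_1)\to A(S,T)\to 0.$$

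Pour (ii), la remarque~\ref{quasirem} entraîne que $A(S,R_1)$ est divisible dès que $S\neq\emptyset$; il en va donc de même de son quotient $A(S,T)$.

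Pour (i), la remarque~\ref{quasirem} fournit la décomposition $A(\emptyset,R_i)\simeq D_i\oplus\mathbb{Z}^{r_{R_i}}$ avec $D_i=A(\emptyset,R_i)_{\mathrm{div}}$ divisible. Comme tout quotient de $D\oplus\mathbb{Z}^n$ (avec $D$ divisible) a pour sous-groupe divisible maximal exactement l'image de $D$, on identifie $\overline{A(\emptyset,T)}$ au conoyau du morphisme induit $\mathbb{Z}^{r_{R_2}}=\overline{A(\emptyset,R_2)}\to\overline{A(\emptyset,R_1)}=\mathbb{Z}^{r_{R_1}}$, d'où son caractère de type fini. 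Pour le rang, on identifie ce morphisme au dual $\mathbb{Z}$-linéaire du morphisme $H^0(K,\hat R_1)\to H^0(K,\hat R_2)$ via l'accouplement de degré qui fournit, pour tout $R$ quasi-trivial, un isomorphisme naturel $\overline{A(\emptyset,R)}\cong\mathrm{Hom}(H^0(K,\hat R),\mathbb{Z})$. La suite longue associée à $0\to\hat T\to\hat R_1\to\hat R_2\to 0$ devient
$$0\to H^0(K,\hat T)\to H^0(K,\hat R_1)\to H^0(K,\hat R_2)\to H^1(K,\hat T)\to 0$$
(où $H^1(K,\hat R_1)=0$ car $R_1$ est quasi-trivial et $H^1(K,\hat T)$ est fini); appliquer $\mathrm{Hom}(-,\mathbb{Z})$ donne une injection $\mathbb{Z}^{r_{R_2}}\hookrightarrow\mathbb{Z}^{r_{R_1}}$ dont le conoyau est de rang $r_{R_1}-r_{R_2}=\text{rg}\,H^0(K,\hat T)$. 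Le point le plus technique sera la vérification de la fonctorialité de l'identification $\overline{A(\emptyset,R)}\cong\mathrm{Hom}(H^0(K,\hat R),\mathbb{Z})$ pour $R$ quasi-trivial, qui, par Shapiro, se ramène au cas de $\G$ où il s'agit de la dualité entre l'application degré classique sur le groupe des classes d'idèles de $K$ et l'entier $1\in H^0(K,\hat\G)\cong\mathbb{Z}$.
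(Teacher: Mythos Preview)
Your dévissage rests on producing a short exact sequence $1 \to R_2 \to R_1 \to T \to 1$ with \emph{both} $R_i$ quasi-trivial. Such a resolution does not exist in general. On character lattices it reads $0 \to \hat T \to P_1 \to P_2 \to 0$ with $P_1,P_2$ permutation modules, i.e.\ a coflasque resolution of $\hat T$ whose coflasque part $P_2$ is stably permutation. But the coflasque class of $\hat T$ is dual to the flasque class of $\hat T^*$, and the latter is nontrivial whenever the dual torus $T'$ is not stably rational --- which already occurs for suitable norm-one tori. Replacing $T$ by $T\times R$ with $R$ quasi-trivial does not change the coflasque class, so your stabilization clause does not help. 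The argument you invoke (``Artin et saturation'') is presumably Ono's lemma, but Ono only furnishes an \emph{isogeny} $R_0 \to T^m\times R_1$ with finite kernel $F$, not an exact sequence of tori.

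Your proof of (ii) can nonetheless be repaired. A resolution with only $R_1$ quasi-trivial always exists (dualize any surjection from a permutation lattice onto the cocharacter lattice $\hat T^*$), and since $k$ is algebraically closed, $K$ and each $K_v$ have cohomological dimension~$1$, so $H^1$ vanishes for \emph{every} torus over these fields --- no need for Hilbert~90 plus Shapiro. Your snake-lemma sequence $0\to A(S,R_2)\to A(S,R_1)\to A(S,T)\to 0$ then holds for arbitrary $R_2$, and $A(S,T)$ is a quotient of the divisible group $A(S,R_1)$.

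For (i) the gap is genuine: both the identification $\overline{A(\emptyset,R_2)}\cong\mathbb{Z}^{r_{R_2}}$ and the functorial isomorphism $\overline{A(\emptyset,R)}\cong\mathrm{Hom}(H^0(K,\hat R),\mathbb{Z})$ require $R_2$ to be quasi-trivial, and without this you are assuming the very statement you wish to prove (for $R_2$ in place of $T$). The paper's route through Ono's lemma is not gratuitous: the finite kernel $F$ is unavoidable, and the extra ingredient needed to handle it is the Poitou-Tate sequence for finite modules over $K$, which shows that $\bigl(\bigoplus_v H^1(K_v,F)\bigr)/H^1(K,F)$ is finite (and zero when $S\neq\emptyset$).
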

    
    \begin{proof} Donnons-nous une partie finie $S$ (éventuellement vide) de $X^{(1)}$ et  notons $U=X \setminus S$. D'après le lemme d'Ono (cf. par 
exemple \cite{sansuc}, lemme~1.7), 
il existe un entier $m>0$ et une suite exacte:
    $$0\rightarrow F \rightarrow R_0 \rightarrow T^m  \times R_1 \rightarrow 0$$
    tels que $F$ est un $K$-groupe abélien fini 
et $R_0$ et $R_1$ sont des tores quasi-triviaux. Observons que 
comme $F$ est fini, le groupe $\prod'_{v \in U^{(1)}} F(K_v)$ est simplement 
$\prod_{v \in U^{(1)}} F(K_v)$; d'autre part, le groupe 
$\prod'_{v \in U^{(1)}} H^1(K_v,F)$ est en fait 
$\bigoplus_{v \in U^{(1)}} H^1(K_v,F)$~: en effet, on peut \'etendre $F$ 
en un sch\'ema en groupes fini \'etale ${\cal F}$ au-dessus d'un 
ouvert de Zariski non vide $U_0$ de $X$, apr\`es quoi on a, pour
tout $v \in U_0^{(1)}$, l'\'egalit\'e $H^1(\calo_v,{\cal F})=0$ puisque 
le corps r\'esiduel $k$ de $\calo_v$ est alg\'ebriquement clos.

On obtient alors un diagramme commutatif à lignes exactes: \smallskip \\
    \small
  \centerline{  \xymatrix{
  0 \ar[r] & F(K) \ar[r] \ar[d] & R_0(K) \ar[r]\ar[d] & T(K)^m \times R_1(K) \ar[r]\ar[d] & H^1(K,F) \ar[r] \ar[d]& 0\\
  0 \ar[r] & \prod_{v\in U^{(1)}} F(K_v) \ar[r] & \prod_{v\in U^{(1)}} ' R_0(K_v) \ar[r] & \prod_{v\in U^{(1)}} ' (T(K_v)^m \times R_1(K_v)) \ar[r] & \bigoplus_{v\in U^{(1)}} H^1(K_v,F) \ar[r] & 0
  }}
  \normalsize

\smallskip

  Une chasse au diagramme permet de montrer qu'on a alors (en notant 
pour simplifier encore $H^1(K,F)$ l'image de $H^1(K,F)$ dans 
$\bigoplus_{v \in U^{(1)}} H^1(K_v,F)$) une suite exacte:
  \begin{equation}
  0 \rightarrow N \rightarrow A(S,R_0) \rightarrow A(S,T)^m\times A(S,R_1) \rightarrow \left( \bigoplus_{v\in U^{(1)}} H^1(K_v,F) \right) /H^1(K,F) \rightarrow 0,\label{se3}
\end{equation}

  avec $N$ un groupe abélien d'exposant fini. Comme 
$\overline{A(S,R_0)}$ est un groupe abélien sans torsion d'apr\`es la 
remarque~\ref{quasirem}, l'image de $N$ dans $A(S,R_0)$ est 
contenue dans $A(S,R_0)_{\text{div}}$. Comme $A(S,R_0)$ est isomorphe 
au produit direct de $\overline{A(S,R_0)}$ et de $A(S,R_0)_{\text{div}}$,
on déduit de la suite (\ref{se3}) que l'on a aussi une suite exacte:
  \begin{equation}
  0 \rightarrow \overline{A(S,R_0)} \times D_0 \rightarrow A(S,T)^m\times A(S,R_1) \rightarrow \left( \bigoplus_{v\in U^{(1)}} H^1(K_v,F) \right) /H^1(K,F) \rightarrow 0,\label{se4}
\end{equation}
où $D_0$ est un groupe divisible. Or la suite exacte de Poitou-Tate pour 
un module galoisien fini sur $K$ (cas $d=-1$ de \cite{diego1}, Th. 2.7) 
implique, en posant $F':=\Hom(F,\Q/\Z)$, que:
$$\left( \bigoplus_{v\in X^{(1)}} H^1(K_v,F) \right) /H^1(K,F)\cong F'(K)^D,$$
et donc le groupe:
$$M_U:=\left( \bigoplus_{v\in U^{(1)}} H^1(K_v,F) \right) /H^1(K,F)$$
est fini; de plus, ce groupe est nul si $U \neq X$, c'est-\`a-dire 
si $S \neq \emptyset$ (\cite{colejm}, Prop. 3.2. iii).

En appliquant le lemme \ref{easy}, on obtient alors la suite exacte:
  \begin{equation}
  0 \rightarrow \overline{A(S,R_0)} \rightarrow \overline{A(S,T)}^m\times \overline{A(S,R_1)} \rightarrow M_U \rightarrow 0 .\label{se5}
\end{equation}
\begin{itemize}
\item[(i)] Supposons d'abord que $S =\emptyset$. Dans ce cas, la 
remarque~\ref{quasirem} montre que $\overline{A(\emptyset,R_0)}\cong \mathbb{Z}^{r_0}$ (resp. $\overline{A(\emptyset,R_1)}\cong \mathbb{Z}^{r_1}$) pour $r_0=\text{rg}\, H^0(K,\hat{R_0})$ (resp. $r_1=\text{rg}\, H^0(K,\hat{R_1})$). 
On en déduit que $\overline{A(\emptyset,T)}$ est un groupe abélien de type fini de rang:
$$\frac{r_0-r_1}{m}=\frac{\text{rg}\, H^0(K,\hat{T}^m)}{m} = \text{rg}\, H^0(K,\hat{T}).$$
\item[(ii)] Supposons maintenant que $S$ est non vide. Dans ce cas, 
la remarque~\ref{quasirem} montre que $A(S,R_0)$ et $A(S,R_1)$ sont divisibles. Par conséquent, $\overline{A(S,T)}^m \cong M_U$ est nul et $A(S,T)$ est
divisible.

\end{itemize}

    \end{proof}
    
    \begin{corollary}
   Supposons que $k$ est algébriquement clos. Soit $T$ un $K$-tore. Le groupe $\overline{A(\emptyset,T)}$ est fini si, et seulement si, $T$ est anisotrope.
    \end{corollary}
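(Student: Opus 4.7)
The plan is to reduce this immediately to the rank computation of Theorem~\ref{rangtheo}(i) and then interpret the vanishing of that rank as the anisotropy of $T$. Since $\overline{A(\emptyset,T)}$ is a finitely generated abelian group by Theorem~\ref{rangtheo}(i), it is finite if and only if its rank is zero, i.e., if and only if $\mathrm{rg}\, H^0(K,\hat T)=0$.

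Now $\hat T$ is, by definition of a $K$-tore, a finitely generated free $\Z$-module equipped with a continuous action of the absolute Galois group $G_K$. Consequently $H^0(K,\hat T)=\hat T^{G_K}$ is a subgroup of a free $\Z$-module of finite rank, hence is itself torsion-free and finitely generated. For such a group, having rank $0$ is equivalent to being the zero group. Therefore the condition $\mathrm{rg}\, H^0(K,\hat T)=0$ is equivalent to $\hat T^{G_K}=0$.

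The last step is to recall that $T$ is by definition anisotropic exactly when $\mathrm{Hom}_{K\text{-gp}}(T,\G)=0$, and this group of $K$-rational characters is canonically identified with $\hat T^{G_K}$. So $T$ is anisotropic if and only if $\hat T^{G_K}=0$, which combined with the above chain of equivalences yields the finiteness criterion for $\overline{A(\emptyset,T)}$. There is no real obstacle here; the only thing worth emphasizing explicitly in the write-up is the freeness of $\hat T$, which is what makes \textit{rank zero} and \textit{trivial} coincide and turns the numerical statement of Theorem~\ref{rangtheo}(i) into the qualitative statement about anisotropy.
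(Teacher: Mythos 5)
Your proof is correct and is exactly the intended deduction: the paper states this corollary without proof as an immediate consequence of Théorème~\ref{rangtheo}(i), and your argument (finite $\Leftrightarrow$ rank zero for a finitely generated group, plus the identification of $\mathrm{rg}\,H^0(K,\hat T)=0$ with anisotropy via the freeness of $\hat T^{G_K}$) is precisely the reasoning the authors leave implicit.
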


\subsection{Une application aux groupes lin\'eaires connexes} 

On suppose toujours que $K$ est le corps des fonctions d'une courbe projective
et lisse $X$ sur un corps alg\'ebriquement clos de caract\'eristique 
z\'ero $k$. Soit $S$ un ensemble fini de points ferm\'es de $X$.
Dans ce paragraphe, on \'etend le th\'eor\`eme~\ref{rangtheo}, ii) \`a un 
$K$-groupe alg\'ebrique lin\'eaire connexe $G$ quelconque. On note 
$G(S,\A_K)$ le produit restreint des $G(K_v)$ pour $v \not \in S$ 
et ${G(K)}_{\adh}$ l'adh\'erence de $G(K)$ dans $G(S,\A_K)$, puis 
$A(S,G)$ l'ensemble quotient $G(S,\A_K)/{G(K)}_{\adh}$, qui est donc 
le d\'efaut d'approximation forte en dehors de $S$.

\begin{theorem} \label{lineairetheo}
Soit $S$ un ensemble fini non vide de points ferm\'es de $X$. Soit $G$ un 
$K$-groupe lin\'eaire connexe. Alors ${G(K)}_{\adh}$ est un 
sous-groupe normal de $G(S,\A_K)$ et le quotient $A(S,G)$ est un groupe 
ab\'elien divisible.
\end{theorem}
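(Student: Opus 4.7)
The plan is to reduce to the torus case via the maximal toric quotient of $G$. In characteristic zero one has the unipotent radical sequence $1 \to R_u(G) \to G \to G^{\mathrm{red}} \to 1$ and then, for the reductive quotient, $1 \to (G^{\mathrm{red}})^{\mathrm{der}} \to G^{\mathrm{red}} \to T \to 1$ with $T$ a $K$-torus. Splicing these together gives a short exact sequence
$$1 \to H \to G \to T \to 1$$
where $T$ is the maximal toric quotient of $G$ and $H$ fits into $1 \to R_u(G) \to H \to H^{\mathrm{ss}} \to 1$ with $H^{\mathrm{ss}}$ semisimple.

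My next step would be to establish strong approximation outside $S$ for $H$, i.e.\ density of $H(K)$ in $H(S, \A_K)$. For the semisimple factor $H^{\mathrm{ss}}$ this is precisely the main theorem of \cite{colejm}. For the unipotent radical, since $\mathrm{char}\, k = 0$ any unipotent group is an iterated extension of copies of $\mathbb{G}_a$, so it suffices to handle $\mathbb{G}_a$; but $S \neq \emptyset$ makes $U = X \setminus S$ affine, whence $H^1(U, \calo_U) = 0$, which yields the surjectivity of $K \to \bigoplus_{v \in U^{(1)}} K_v/\calo_v$ and hence density of $\mathbb{G}_a(K) = K$ in $\mathbb{G}_a(S,\A_K)$. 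Splicing these two ingredients via the exact sequence defining $H$ then gives strong approximation for $H$.

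To pass the extension $1 \to H \to G \to T \to 1$ to the adelic level, I use that both $K$ and each $K_v$ have cohomological dimension one (since $k$ is algebraically closed of characteristic zero), so by Steinberg's theorem $H^1(K, H) = 0$ and $H^1(K_v, H) = 0$ for the connected group $H$; for a smooth integral model the analogous vanishing over $\calo_v$ for almost all $v$ follows from the same Steinberg vanishing over the residue field $\kappa(v) = k$. Hence both $G(K) \to T(K)$ and $G(S, \A_K) \to T(S, \A_K)$ are surjective. Writing $p$ for the latter projection, I claim $G(K)_{\adh} = p^{-1}(T(K))$: the inclusion $\subseteq$ is immediate from continuity and from the discreteness (hence closedness) of $T(K)$ in $T(S, \A_K)$ provided by Theorem~\ref{gentheo}; conversely, for $g \in p^{-1}(T(K))$, lifting $p(g) \in T(K)$ to $\tilde g \in G(K)$ places $g \tilde g^{-1}$ in $H(S, \A_K) = H(K)_{\adh} \subseteq G(K)_{\adh}$, so $g \in G(K)_{\adh}$. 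Thus $G(K)_{\adh}$ is the preimage of a normal subgroup, hence normal, and the quotient identifies with $A(S, T)$, which is divisible by Theorem~\ref{rangtheo}(ii).

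The main obstacle I expect is assembling strong approximation for $H$ from its semisimple and unipotent constituents in the restricted product topology, and controlling the integral model carefully enough to get genuine exactness at the level of $G(S, \A_K) \to T(S, \A_K)$ (not merely pointwise surjectivity at each place). Both points should be routine given the excellent setup—algebraically closed residue fields of characteristic zero—but they do require some nontrivial verification, especially in checking that the lifts $\tilde g$ can be chosen compatibly at almost all places so as to land in $G(S,\A_K)$.
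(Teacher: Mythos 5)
Your proposal is correct and follows essentially the same route as the paper's proof: Steinberg vanishing of $H^1$ over the $C_1$ fields $K$ and $K_v$, strong approximation outside $S$ for the semisimple part (the main theorem of Colliot-Th\'el\`ene) and for unipotent groups (via $\G_a$ and the affineness of $X \setminus S$), discreteness of $T(K)$ from the Th\'eor\`eme~\ref{gentheo}, and divisibility of $A(S,T)$ from the Th\'eor\`eme~\ref{rangtheo}~(ii). The only organizational difference is that you handle the full kernel of $G \to T$ by a single d\'evissage, whereas the paper first splits off the unipotent radical using Mostow's theorem and then treats the semisimple kernel of the reductive quotient.
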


\dem On va d\'eduire ce r\'esultat du th\'eor\`eme~\ref{rangtheo}, ii) et du 
th\'eor\`eme principal de \cite{colejm}, en suivant une m\'ethode un 
peu similaire \`a celle de \cite{sansuc}, Th. 3.3. 

\smallskip 

On commence par observer que comme le corps $K$ et
les compl\'et\'es $K_v$ sont $C_1$, le th\'eor\`eme de Steinberg
(\cite{cogal}, \S III.2.2., Th. 1) assure
que $H^1(K,L)=H^1(K_v,L)=1$ pour tout groupe lin\'eaire connexe 
$L$. Il en r\'esulte que toute suite exacte de $K$-groupes lin\'eaires 
connexes 
$$1 \to L_1 \to L_2 \to L_3 \to 1$$ 
induit une suite exacte de groupes
$$1 \to L_1(K) \to L_2(K) \to L_3(K) \to 1 , $$
et de m\^eme si on remplace $K$ par $K_v$. On en d\'eduit imm\'ediatement 
par d\'evissage que si $L_1$ et $L_3$ satisfont l'approximation forte en 
dehors de $S$, il en va de m\^eme de $L_2$. Or, le groupe additif 
${\bf G}_a$ v\'erifie l'approximation forte en dehors de $S$ parce que 
$S$ est non vide, d'apr\`es le th\'eor\`eme d'approximation forte 
pour les anneaux de Dedekind (\cite{cohn}, Th. 10.5.10) appliqu\'e 
\`a l'anneau des fonctions r\'eguli\`eres sur la courbe affine $U:=X-S$. 
Comme tout groupe connexe unipotent en caract\'eristique z\'ero s'obtient 
\`a partir du groupe trivial via des extensions successives par ${\bf G}_a$,
on en d\'eduit que tout $K$-groupe unipotent connexe satisfait 
l'approximation forte en dehors de $S$. 

\smallskip
Si $U$ d\'esigne le radical unipotent de $G$, le quotient 
$H:=G/U$ est r\'eductif; de plus, en tant que $K$-vari\'et\'e, 
$G$ est isomorphe au produit $U \times_K H$ par le th\'eor\`eme de 
Mostow. Il en r\'esulte 
que $G(K)_{\adh}$ est l'image r\'eciproque de $H(K)_{\adh}$ par la 
surjection canonique $G(S,\A_K) \to H(S,\A_K)$. Pour d\'emontrer le 
th\'eor\`eme, on peut donc supposer que $G$ est r\'eductif.

\smallskip

On sait alors que $G$ s'ins\`ere dans une suite exacte de $K$-groupes 
alg\'ebriques
$$1 \to L \to G \to T \to 1,$$
o\`u $L$ est semi-simple et $T$ est un tore. 
Le th\'eor\`eme de Steinberg fournit encore
un diagramme commutatif \`a lignes exactes de groupes topologiques
$$
\begin{CD}
1 @>>> L(K) @>>> G(K) @>{\varphi}>> T(K) @>>> 1 \cr
&& @VVV @VVV @VV{\theta}V \cr
1 @>>> \prod_{v \not \in S} ' L(K_v) @>>> \prod_{v \not \in S} ' G(K_v) 
@>{u}>> \prod_{v \not \in S} ' T(K_v) @>>> 1 .
\end{CD}
$$
Soit $I=u^{-1}(\im \theta)$. Comme $T(K)$ est ferm\'e dans 
$\prod_{v \not \in S} ' T(K_v)$ d'apr\`es le th\'eor\`eme~\ref{gentheo},
le sous-groupe $I$ est ferm\'e dans $\prod_{v \not \in S} ' G(K_v)$, et 
il est \'egalement normal car $\prod_{v \not \in S} ' T(K_v)$ est ab\'elien.
Ainsi $I$ contient l'adh\'erence $G(K)_{\adh}$. On obtient 
un diagramme commutatif \`a lignes exactes
$$
\begin{CD}
1 @>>> L(K) @>>> G(K) @>{\varphi}>> T(K) @>>> 1 \cr
&& @V{i}VV @VVV @VV{\theta}V \cr
1 @>>> \prod_{v \not \in S} ' L(K_v) @>>> I
@>{u}>> \im \theta @>>> 1 ,
\end{CD}
$$
o\`u la fl\`eche $\theta$ est maintenant surjective. Comme 
$L(K)$ est dense dans $\prod_{v \not \in S} ' L(K_v)$ d'apr\`es 
le Th. 3.4. de \cite{colejm}, une chasse au diagramme imm\'ediate 
donne alors que $G(K)_{\adh}=I$. Ainsi 
$$A(S,G)=(\prod_{v \not \in S} ' G(K_v))/I \cong (\prod_{v \not \in S} ' T(K_v))
/T(K)=A(S,T)$$
est bien un groupe ab\'elien divisible, ce qui ach\`eve la preuve.

\enddem

\begin{remarque}
Dans le cas o\`u $S=\emptyset$ et $G=G_0 \times_k K$ (o\`u $G_0$ est 
un $k$-groupe lin\'eaire), on peut prendre $\mathcal G=G \times_k X$ 
comme mod\`ele de $G$ au-dessus de $X$.
Alors, l'intersection 
$$G(K) \cap \prod_{v \in X^{(1)}} {\mathcal G}(\calo_v) ={\mathcal G}(X)=
\Hom_k(X,G_0)$$ 
est r\'eduite \`a $G_0(k)$ car $X$ est projective et $G_0$ affine 
sur $k$. On en d\'eduit imm\'ediatement que $G(K)$ est discret 
dans $G(\emptyset,\A_K)$ (ceci pour $k$ quelconque). 
Il ne semble pas y avoir de bonne description de l'ensemble quotient  
$G(\emptyset,\A_K)/G(K)$ en g\'en\'eral (m\^eme si $G$ est semi-simple 
et simplement connexe), tout comme dans le cas d'un corps de nombres 
ou d'un corps de fonctions sur un corps fini o\`u il faut toujours enlever 
au moins une place pour obtenir des \'enonc\'es d'approximation forte. 
Nous allons voir au paragraphe suivant que la situation est meilleure 
si on se limite aux tores.
\end{remarque}

        \subsection{Suite de Poitou-Tate}
   
Soit $T$ un $K$-tore. Notre but ici est de traiter le cas "$d=-1$" du
th\'eor\`eme~3.20 de \cite{diego1}, et d'en d\'eduire une suite exacte analogue
\`a celle du corps de classes global reliant $A(\emptyset,T)$ \`a un groupe
de cohomologie galoisienne sur $K$ associ\'e \`a $T$. Dans loc. cit., o\`u 
le corps $k$ est un corps $d$-local, 
c'est le complexe motivique $\Z(d)$ qui appara\^{\i}t. Suivant une suggestion 
de B. Kahn, que nous remercions pour son aide, c'est $\mathbb{Q}/\mathbb{Z}(-1)$ qui va jouer ce r\^ole dans notre cas $d=-1$. Plus pr\'ecis\'ement, on 
pose, en suivant \cite{kahn}, d\'efinition~4.1 ~:
 
$$\tilde{T} = \hat{T} \otimes \mathbb{Q}/\mathbb{Z}(-1).$$ 

Rappelons que $\G=\Z(1)[1]$ dans la cat\'egorie d\'eriv\'ee des 
modules galoisiens sur $K$. On a alors (\cite{kahn}, section 5) 
dans cette cat\'egorie un accouplement naturel:

   \begin{equation}
   T \otimes^{\mathbb L} \tilde{T} \rightarrow \mathbb{Z}(1)[1] \otimes^{\mathbb L} \mathbb{Q}/\mathbb{Z}(-1) \rightarrow \mathbb{Z}[2].
   \end{equation}
   Pour $v \in X^{(1)}$, on obtient alors un accouplement naturel en cohomologie:
   \begin{equation}\label{accloc}
   H^0(K_v,T) \times H^0(K_v,\tilde{T}) \rightarrow H^2(K_v,\mathbb{Z}) \cong \mathbb{Q}/\mathbb{Z}.
      \end{equation}

Notons que $\tilde T$ est de torsion, avec $_n \tilde T=\hat T \otimes 
\Z/n \Z(-1)$ pour tout $n>0$.  
   
   \begin{lemma} \label{localkalg}
   Soit $v\in X^{(1)}$. L'accouplement (\ref{accloc}) 
induit un accouplement parfait entre un groupe profini et un 
groupe discret de torsion:
   $$H^0(K_v,T)_{\wedge} \times H^0(K_v,\tilde{T}) \rightarrow \mathbb{Q}/\mathbb{Z}.$$
   \end{lemma}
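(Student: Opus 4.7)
The plan is to reduce the statement to a Tate-style local duality at finite level via Kummer theory, and then pass to the limit. Since the residue field $k(v)$ is algebraically closed of characteristic zero, $K_v$ is $C_1$, so its absolute Galois group is isomorphic to $\hat{\Z}$ (with cohomological dimension $1$) and Steinberg's theorem gives $H^1(K_v,T)=0$. The Kummer sequence $1 \to T[n] \to T \xrightarrow{n} T \to 1$ therefore yields an isomorphism
$$T(K_v)/nT(K_v) \xrightarrow{\sim} H^1(K_v,T[n])$$
for every $n \geq 1$. Passing to the inverse limit, $T(K_v)_{\wedge} \cong \varprojlim_n H^1(K_v,T[n])$, which is profinite.

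On the dual side, since $\tilde T$ is torsion, its $n$-torsion is
$$\tilde T[n] = \hat T \otimes \Z/n(-1) = \Hom(T[n],\Z/n),$$
using $T[n]=X_*(T) \otimes \mu_n$. Accordingly $H^0(K_v,\tilde T) = \varinjlim_n H^0(K_v,\tilde T[n])$ is a discrete torsion group. I then apply the analogue of Tate local duality for $K_v$: since $G_{K_v} \cong \hat \Z$, a standard invariants/coinvariants computation shows that for every finite $G_{K_v}$-module $M$ of exponent $n$ the cup product induces a perfect pairing of finite abelian groups
$$H^0(K_v,M) \times H^1(K_v,\Hom(M,\Z/n)) \to H^1(K_v,\Z/n) = \Z/n.$$
Applied to $M=\tilde T[n]$, this identifies $H^0(K_v,\tilde T[n])$ with the Pontryagin dual of $H^1(K_v,T[n])$, and taking the direct limit on one side and the inverse limit on the other yields the sought perfect pairing between the profinite group $T(K_v)_{\wedge}$ and the discrete torsion group $H^0(K_v,\tilde T)$.

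The remaining point, which I expect to be the main obstacle, is to check that the finite-level cup product pairing used above agrees with the restriction of the pairing~\eqref{accloc} to the $n$-torsion of $\tilde T$. This compatibility amounts to unwinding the derived pairing $T \otimes^{\mathbb L} \tilde T \to \Z[2]$ via the identification $\G = \Z(1)[1]$ and the Kummer connecting morphisms; it is a purely formal but somewhat tedious diagram chase. Once it is established, the perfect duality is an immediate consequence of Pontryagin duality between direct and inverse systems of finite abelian groups.
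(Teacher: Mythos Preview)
Your proof is correct and follows essentially the same route as the paper: Kummer theory plus $H^1(K_v,T)=0$ to identify $T(K_v)/n$ with $H^1(K_v,{_n}T)$, finite-level Tate duality over $K_v$ (the paper cites \cite{adt}, I, Example~1.10 rather than arguing directly from $G_{K_v}\cong\hat\Z$), and passage to the limit. The compatibility you flag between the finite-level cup product and the restriction of~\eqref{accloc} is not spelled out in the paper either; it is indeed a routine unwinding.
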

   
   \begin{proof}

Soit $n>0$. On \'ecrit la suite exacte de Kummer
$$0 \to _n T \to T \stackrel{.n}{\to} T \to 0.$$
On a $H^1(K_v,T)=0$ (ce groupe est d'exposant fini par Hilbert 90 et 
$H^1(K_v,T)/n$ s'injecte dans $H^2(K_v,_n T)$ via la suite de Kummer, avec 
de plus $K_v$ de dimension 
cohomologique $1$ comme corps 
complet pour une valuation discr\`ete de corps r\'esiduel alg\'ebriquement 
clos). Ainsi $H^0(K_v,T)/n \cong H^1(K_v,{_n}T)$. Par \cite{adt}, partie I,
exemple 1.10,
le groupe $H^1(K_v,{_n}T)$ est fini et il est dual de 
$H^0(K_v, \hat T \otimes \Z/n \Z(-1))$, ou encore de 
$H^0(K_v, _n \tilde T)$.

\smallskip

Finalement, pour chaque $n>0$, on a un isomorphisme naturel:
   $$H^0(K_v,T)/n \cong ({_n}H^0(K_v,\tilde{T}))^D.$$
   En passant à la limite projective sur $n$ et en notant que $\tilde T$ 
est de torsion, on obtient un 
isomorphisme~ :
   $$H^0(K_v,T)_{\wedge} \cong H^0(K_v,\tilde{T})^D.$$

Notons que $H^0(K_v,T)_{\wedge}=H^0(K_v,T)^{\wedge}$ est bien profini
puisque chaque $H^0(K_v,T)/n$ est fini.
   \end{proof}
   
Soit $\mathcal{T}$ un tore sur un ouvert non vide $U$ de $X$ étendant $T$. Soit $\tilde{\mathcal{T}} =    \hat{\mathcal{T}} \otimes \mathbb{Q}/\mathbb{Z}(-1)$.

  Posons:
\begin{gather*}   
   \mathbb{P}^0(\tilde{T}) := \prod_{v\in X^{(1)}} \tilde{T}(K_v),\\
    \mathbb{P}^1(\tilde{T}) := \bigoplus_{v\in X^{(1)}} H^1(K_v,\tilde{T}).
   \end{gather*}
Pour chaque $n>0$, munissons  
${_n}\mathbb{P}^0(\tilde{T}) := \prod_{v\in X^{(1)}} {_n}\tilde{T}(K_v)$ 
de la topologie produit (obtenue \`a partir 
des topologies discr\`etes sur chaque groupe fini ${_n}\tilde{T}(K_v)$), 
qui en fait un espace compact.
 Munissons ensuite $\mathbb{P}^0(\tilde{T})_{\text{tors}} = \varinjlim_n {_n}\mathbb{P}^0(\tilde{T})$ de la topologie limite inductive. Noter 
que $\mathbb{P}^1(\tilde{T})$ est aussi le produit restreint 
$\prod_{v\in X^{(1)}} ' H^1(K_v,\tilde{T})$~: en effet on a 
$H^1(\calo_v, \tilde{\mathcal T})=0$ pour $v \in U^{(1)}$ puisque le corps 
r\'esiduel de $\calo_v$ est alg\'ebriquement clos. 

\begin{remarque}
Contrairement au cas o\`u $K$ est un corps de nombres, on n'a 
pas en g\'en\'eral $\Sha^1(K,\Q/\Z)=0$, faute d'avoir l'analogue du 
th\'eor\`eme de \v Cebotarev. Plus pr\'ecis\'ement, $\Sha^1(K,\Q/\Z)$ 
s'identifie au dual du groupe fondamental \'etale de $X$ (ou de son 
ab\'elianis\'e), il est donc nul seulement quand $X$ est la droite 
projective, et est infini en g\'en\'eral. 
\end{remarque}

\begin{lemma} \label{firstlem}
On a une suite exacte :
$$0 \rightarrow H^0(K,\tilde{T}) \rightarrow \mathbb{P}^0(\tilde{T})_{\text{tors}} \rightarrow (H^0(K,T)_{\wedge})^D \rightarrow \Sha^1(K,\tilde{T}) \rightarrow 0.$$
\end{lemma}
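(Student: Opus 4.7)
The plan is to derive the four-term sequence from the Poitou--Tate exact sequence for finite Galois modules---the case $d=-1$ of Théorème~2.7 of \cite{diego1}---applied to the $n$-torsion ${_n}\tilde T=\hat T\otimes\Z/n\Z(-1)$, and then to pass to the filtered direct limit over $n$.

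For each integer $n>0$, apply Poitou--Tate to the finite module ${_n}\tilde T$, whose local and global arithmetic dual is ${_n}T$; this identification is compatible with the local duality $H^1(K_v,{_n}T)\cong H^0(K_v,{_n}\tilde T)^D$ used in the proof of Lemma~\ref{localkalg}. Extracting the portion from $H^0(K,{_n}\tilde T)$ through the kernel of the map to local $H^1$'s gives
$$0\to H^0(K,{_n}\tilde T)\to \prod_{v\in X^{(1)}}H^0(K_v,{_n}\tilde T)\to H^1(K,{_n}T)^D\to \Sha^1(K,{_n}\tilde T)\to 0.$$
Since $K$ is $C_1$ by Tsen, Steinberg's theorem forces $H^1(K,T)=0$, so the Kummer sequence $0\to{_n}T\to T\xrightarrow{n}T\to 0$ yields $H^1(K,{_n}T)\cong T(K)/n$ and hence $H^1(K,{_n}T)^D\cong (T(K)/n)^D$. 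A direct computation with the connecting homomorphism shows that for $n\mid m$ the transition $H^1(K,{_m}T)\to H^1(K,{_n}T)$ dual to the inclusion ${_n}\tilde T\hookrightarrow{_m}\tilde T$ corresponds, under this identification, to the natural projection $T(K)/m\twoheadrightarrow T(K)/n$.

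Finally, pass to the filtered direct limit over $n$ (ordered by divisibility). Filtered colimits being exact, the four-term sequence survives. Since $\tilde T$ is torsion, the first two terms become $H^0(K,\tilde T)$ and $\mathbb{P}^0(\tilde T)_{\text{tors}}$; since every cohomology class is unramified almost everywhere (residue fields of the $\calo_v$ being algebraically closed), $\Sha^1$ commutes with filtered colimits, so the fourth term becomes $\Sha^1(K,\tilde T)$; and the surjectivity of the transitions $T(K)/m\twoheadrightarrow T(K)/n$ yields the Mittag--Leffler condition, so
$$\varinjlim_n (T(K)/n)^D=\bigl(\varprojlim_n T(K)/n\bigr)^D=(H^0(K,T)_{\wedge})^D.$$
This produces the required sequence. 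The most delicate point is the compatibility of the transition maps: verifying that the inclusions ${_n}\tilde T\hookrightarrow{_m}\tilde T$ driving the direct system correspond, on the dual side, to the natural projections $T(K)/m\twoheadrightarrow T(K)/n$. Once this functoriality is in hand, the limit step is purely formal.
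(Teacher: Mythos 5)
Your proposal is correct and follows essentially the same route as the paper: apply the $d=-1$ Poitou--Tate sequence of \cite{diego1} to the finite modules ${_n}\tilde T$ (with dual ${_n}T$), identify $H^1(K,{_n}T)$ with $T(K)/n$ via Kummer and the vanishing of $H^1(K,T)$ over the $C_1$ field $K$, and pass to the filtered colimit over $n$. The only cosmetic difference is that you truncate at $\Sha^1(K,{_n}\tilde T)$ before taking the limit (hence must argue directly that $\varinjlim_n\Sha^1(K,{_n}\tilde T)=\Sha^1(K,\tilde T)$, which your unramifiedness remark does justify), whereas the paper keeps the terms $H^1(K,\tilde T)\to\mathbb{P}^1(\tilde T)$ in the limit diagram and reads off $\Sha^1(K,\tilde T)$ as a kernel afterwards.
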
  

\begin{proof}
On a un diagramme commutatif: \smallskip \\
\centerline{\xymatrix{
0 \ar[r] & \varinjlim_n H^0(K,{_n}\tilde{T}) \ar[r]\ar[d]^{\cong} &  \varinjlim_n \mathbb{P}^0({_n}\tilde{T})\ar[r]\ar[d]^{\cong} &  
\varinjlim_n (H^1(K,{_n}T)^D) \ar[r]\ar[d]^{\cong} &  \varinjlim_n H^1(K,{_n}\tilde{T}) \ar[r]\ar[d]^{\cong} &  \varinjlim_n \mathbb{P}^1({_n}\tilde{T})\ar[d]^{\cong} \\
0 \ar[r] & H^0(K,\tilde{T}) \ar[r] & \mathbb{P}^0(\tilde{T})_{\text{tors}}\ar[r] & (H^0(K,T)_{\wedge})^D \ar[r] & H^1(K,\tilde{T}) \ar[r] & \mathbb{P}^1(\tilde{T}) 
}}

\smallskip

Montrons d'abord que tous les morphismes verticaux sont des 
isomorphismes. Le premier est clair 
parce que $\tilde T$ est de torsion, et pour le deuxi\`eme c'est imm\'ediat. 
Le troisi\`eme vient des isomorphismes $H^0(K,T)/n \cong H^1(K,_n T)$, lesquels 
r\'esultent de la suite de Kummer et de ce que $H^1(K,T)=0$ puisque $K$ 
est un corps $C_1$ par le th\'eor\`eme de Tsen. Le quatri\`eme vient 
de ce que $\tilde T$ est de torsion. Enfin, pour le dernier, 
on utilise la commutation de $\varinjlim$ avec $\oplus$ et avec la 
cohomologie galoisienne.

\smallskip

Maintenant, la première ligne est exacte en passant \`a la limite sur 
la suite de Poitou-Tate pour les modules finis (\cite{diego1}, Th.~2.7 dans 
le cas $d=-1$).
Il en est donc de même de la deuxième, d'où le lemme.
\end{proof}
  
  \begin{lemma}
  La dualité locale induit un isomorphisme:
  $$\mathbb{P}^0(T)_{\wedge} \cong (\mathbb{P}^0(\tilde{T})_{\rm tors})^D.$$
  \end{lemma}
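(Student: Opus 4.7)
The plan is to establish the isomorphism level-by-level in $n$ and then pass to the (inverse) limit via Pontryagin duality. The geometric input enters through a single local statement: for every $v \in U^{(1)}$ and every $n > 0$, the group $\mathcal{T}(\calo_v)$ is $n$-divisible. In the present setting the residue field $k(v) = k$ is algebraically closed of characteristic zero, so this follows from Hensel's lemma applied to the smooth group scheme $\mathcal{T}$: the reduction $\mathcal{T}_{k(v)}(k(v))$ is divisible (points of a torus over an algebraically closed field), and the kernel of reduction is uniquely divisible, being isomorphic via the formal logarithm to $(\mathfrak{m}_v^{\dim T},+)$, a $\Q$-vector space.

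As an immediate consequence, the canonical map $\mathbb{P}^0(T)/n \to \prod_v T(K_v)/n$ lands in the direct sum (the factor $\mathcal{T}(\calo_v)/n$ vanishes for every $v \in U^{(1)}$), and it is injective and surjective because one can lift any compatible family of classes to a restricted-product element using $n$-divisibility at the good places; hence $\mathbb{P}^0(T)/n \cong \bigoplus_v T(K_v)/n$. Combined with the finite-level local duality $T(K_v)/n \cong ({_n}\tilde{T}(K_v))^D$ which was produced during the proof of Lemma~\ref{localkalg}, together with the standard fact that Pontryagin duality converts unrestricted direct products of finite discrete groups into direct sums of the duals, this yields
$$\mathbb{P}^0(T)/n \;\cong\; \bigoplus_v T(K_v)/n \;\cong\; \bigoplus_v \bigl({_n}\tilde{T}(K_v)\bigr)^D \;\cong\; \Bigl(\prod_v {_n}\tilde{T}(K_v)\Bigr)^D.$$

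To conclude I would take $\varprojlim_n$ on both sides. The left-hand side is $\mathbb{P}^0(T)_\wedge$ by definition. On the right, $\mathbb{P}^0(\tilde{T})_{\rm tors} = \varinjlim_n \prod_v {_n}\tilde{T}(K_v)$ is a filtered colimit of compact discrete torsion groups, and Pontryagin duality transforms it into the inverse limit of the compact duals $(\prod_v {_n}\tilde{T}(K_v))^D$, matching the expression above. The finite-level isomorphisms are manifestly natural in $n$, so they assemble into the required isomorphism $\mathbb{P}^0(T)_\wedge \cong (\mathbb{P}^0(\tilde{T})_{\rm tors})^D$. The main technical point is really the first step, i.e.\ correctly controlling the restricted product modulo $n$; this rests entirely on the $n$-divisibility of $\mathcal{T}(\calo_v)$ and therefore on the hypotheses $k = \bar k$ and $\mathrm{char}\,k = 0$. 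Everything afterwards is a formal manipulation with dualities and filtered limits.
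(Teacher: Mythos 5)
Your proof is correct and follows essentially the same route as the paper's: use the $n$-divisibility of $\mathcal{T}(\calo_v)$ (Hensel's lemma plus the fact that the residue field is algebraically closed of characteristic zero) to identify $\mathbb{P}^0(T)/n$ with $\bigoplus_v T(K_v)/n$, apply the finite-level local duality from Lemma~\ref{localkalg}, and pass to the limit over $n$ using the compatibility of Pontryagin duality with products, sums and filtered (co)limits. The only difference is that you spell out a few steps the paper leaves implicit (the unique divisibility of the kernel of reduction, and the lifting argument identifying the restricted product modulo $n$ with the direct sum), which is harmless.
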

  
  \begin{proof} On observe que pour $v \not \in U^{(1)}$, le groupe 
$H^0(\calo_v,{\mathcal T})$ est divisible via le lemme de Hensel, 
puisque la r\'eduction modulo $v$ de ${\mathcal T}$ est un tore sur 
un corps alg\'ebriquement clos de caract\'eristique z\'ero. Il en 
r\'esulte que pour tout $n>0$ le groupe $\mathbb{P}^0(T)/n$ s'identifie 
\`a $\bigoplus_{v \in X^{(1)}} H^0(K_v,T)/n$. En utilisant alors le 
lemme~\ref{localkalg}, on obtient~:
$$\mathbb{P}^0(T)/n \cong ({_n}\mathbb{P}^0(\tilde{T}))^D.$$
Il suffit alors de passer à la limite projective sur $n$.
  \end{proof}

   \begin{theorem} \label{poitoukalg}
   On a une suite exacte (de type Poitou-Tate)~:
   $$0 \rightarrow \Sha^{1}(K,\tilde{T})^D \rightarrow H^0(K,T)_{\wedge} \rightarrow \mathbb{P}^0(T)_{\wedge} \rightarrow H^0(K,\tilde{T})^D \rightarrow 0.$$
   \end{theorem}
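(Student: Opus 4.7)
La stratégie consiste à déduire cette suite par dualité de Pontryagin à partir de la suite exacte du lemme~\ref{firstlem}. Les quatre termes de celle-ci sont en effet naturellement topologisés~: les groupes $H^0(K,\tilde T)$ et $\Sha^1(K,\tilde T)$ sont discrets de torsion; $\mathbb{P}^0(\tilde{T})_{\text{tors}}$ est limite inductive des groupes compacts ${_n}\mathbb{P}^0(\tilde{T}) = \prod_{v \in X^{(1)}} {_n}\tilde{T}(K_v)$; enfin $(H^0(K,T)_{\wedge})^D$ est discret, comme dual d'un groupe profini.

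En dualisant formellement cette suite, on obtient~:
$$0 \to \Sha^1(K,\tilde T)^D \to \bigl((H^0(K,T)_{\wedge})^D\bigr)^D \to (\mathbb{P}^0(\tilde{T})_{\text{tors}})^D \to H^0(K,\tilde T)^D \to 0.$$
Il reste à identifier les deux termes médians. La réflexivité de Pontryagin pour les groupes profinis donne $\bigl((H^0(K,T)_{\wedge})^D\bigr)^D \cong H^0(K,T)_{\wedge}$, et le lemme qui précède le théorème affirme précisément que $(\mathbb{P}^0(\tilde{T})_{\text{tors}})^D \cong \mathbb{P}^0(T)_{\wedge}$. Cela fournit exactement la suite voulue.

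Le point principal à justifier est que la dualité de Pontryagin préserve bien l'exactitude de la suite du lemme~\ref{firstlem}. Pour cela, on remonte à la preuve même du lemme~: la suite y est obtenue comme limite inductive sur $n>0$ des suites de Poitou-Tate pour les modules galoisiens finis ${_n}T$ (cas $d=-1$ de \cite{diego1}, Th.~2.7), lesquelles ne mettent en jeu que des groupes finis ou compacts et se dualisent donc sans difficulté. Le passage à la limite projective sur $n$ après dualisation est lui-même exact, les systèmes concernés étant formés de groupes finis et satisfaisant donc automatiquement la condition de Mittag-Leffler. On en déduit l'exactitude de la suite duale, et donc le théorème.
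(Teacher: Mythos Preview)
Votre démarche diffère de celle du papier~: vous proposez de dualiser les suites de Poitou-Tate de niveau $n$ puis de passer à la limite projective, alors que le papier dualise directement la suite limite du lemme~\ref{firstlem} en vérifiant des conditions de stricte continuité avant d'invoquer \cite{dhsza2}, Lemme~2.4. Votre route est en principe praticable et plus élémentaire, mais son exécution repose sur une erreur récurrente~: vous affirmez que $H^0(K,T)_{\wedge}$ est profini, donc $(H^0(K,T)_{\wedge})^D$ discret, et que les systèmes projectifs obtenus après dualisation sont formés de groupes finis. C'est faux. D\'ej\`a pour $T=\G$, le quotient $H^0(K,T)/n = K^*/(K^*)^n$ est infini~: comme $k^*$ est divisible, il s'identifie à $(K^*/k^*)/n$, où $K^*/k^*$ est le groupe des diviseurs principaux sur $X$, libre de rang infini. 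De m\^eme $\mathbb{P}^0(T)/n \cong \bigoplus_{v \in X^{(1)}} T(K_v)/n$ est infini. Votre justification de Mittag-Leffler par finitude tombe donc, tout comme l'invocation de la réflexivité des groupes profinis pour $H^0(K,T)_{\wedge}$.

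L'argument se répare néanmoins~: les morphismes de transition $H^0(K,T)/mn \to H^0(K,T)/n$ sont surjectifs (simple réduction modulo $n$), ce qui assure Mittag-Leffler pour ce système et pour son quotient par le système fini $(\Sha^1(K,{_n}\tilde T)^D)_n$; cela suffit pour l'exactitude du passage à la limite projective sur la suite duale de niveau $n$. Quant à la bidualité $((H^0(K,T)_{\wedge})^D)^D \cong H^0(K,T)_{\wedge}$, elle reste vraie mais pour une autre raison~: $H^0(K,T)_{\wedge}$ est limite projective de groupes discrets de torsion et son dual est limite inductive de groupes profinis --- c'est d'ailleurs exactement ce que le papier observe dans sa propre preuve.
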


   \begin{proof}
   D'apr\`es le lemme~\ref{firstlem}, on a une suite exacte~:
   $$0 \rightarrow H^0(K,\tilde{T}) \rightarrow \mathbb{P}^0(\tilde{T})_{\text{tors}} \rightarrow (H^0(K,T)_{\wedge})^D \rightarrow \Sha^1(K,\tilde{T}) \rightarrow 0$$
   telle que:

\smallskip

   \begin{itemize}
\item[$\bullet$]   l'image du morphisme $H^0(K,\tilde{T}) \rightarrow \mathbb{P}^0(\tilde{T})_{\text{tors}}$ est discrète car $H^0(K,\tilde{T})$ est de torsion de type cofini; 
\item[$\bullet$] le groupe $\Sha^1(K,\tilde{T})$ est discret;
\item[$\bullet$] le groupe localement compact 
$\mathbb{P}^0(\tilde{T})_{\text{tors}}$ est réunion dénombrable 
d'espaces compacts. Le groupe $H^0(K,T)_{\wedge}$ est limite projective
de groupes discrets de torsion, donc le groupe $(H^0(K,T)_{\wedge})^D$ 
est localement compact (limite inductive de groupes profinis) et son dual 
est $H^0(K,T)_{\wedge}$; 
\item[$\bullet$] l'image du morphisme 
$\mathbb{P}^0(\tilde{T})_{\text{tors}} \rightarrow (H^0(K,T)_{\wedge})^D$ 
est fermée: ce morphisme est donc strict (\cite{hewitt}, Th. 5.29).
\end{itemize}
On en déduit (en utilisant \cite{dhsza2}, Lemme~2.4) que la suite duale:
$$0 \rightarrow \Sha^{1}(K,\tilde{T})^D \rightarrow H^0(K,T)_{\wedge} \rightarrow \mathbb{P}^0(T)_{\wedge} \rightarrow H^0(K,\tilde{T})^D \rightarrow 0$$
est exacte.

   \end{proof}

\begin{remarque}
La fl\`eche
$\mathbb{P}^0(T)_{\wedge} \to H^0(K,\tilde{T})^D $ est ici induite
par une fl\`eche ``de r\'eciprocit\'e'' $r : \mathbb{P}^0(T) \to 
H^0(K,\tilde{T})^D$ (dont le noyau contient l'image de $H^0(K,T)$), 
l'accouplement correspondant
\begin{equation} \label{reciproc}
\mathbb{P}^0(T) \times H^0(K,\tilde{T}) \to \Q/\Z
\end{equation}
\'etant d\'efini pour tout $(t_v) \in \mathbb{P}^0(T)$ et tout
$\tilde t \in H^0(K,\tilde{T})$ par la formule
$$((t_v),\tilde t)=\sum_{v \in X^{(1)}} (t_v,\tilde t_v)_v,$$
o\`u $(,)_v$ est l'accouplement local en $v$ et $\tilde t_v$ est
l'image de $\tilde t$ dans $H^0(K_v,\tilde T)$.
\end{remarque}

   \begin{theorem} \label{kalgclos}
   Soit $\rho = \text{rg}(H^0(K,\hat{T}))$. Rappelons que $A(\emptyset,T)$ désigne le groupe $\mathbb{P}^0(T)/T(K)$. 
\begin{itemize}   
\item[(i)] On a un isomorphisme naturel $(\overline{A(\emptyset,T)})_{\text{tors}} \cong H^1(K,\hat{T})^D$. 
\item[(ii)]   On a une suite exacte:
   $$0 \rightarrow \overline{A(\emptyset,T)} \rightarrow H^0(K,\tilde{T})^D \rightarrow (\hat{\mathbb{Z}}/\mathbb{Z})^{\rho} \rightarrow 0,$$
   telle que l'image $I$ du morphisme $r : A(\emptyset,T) 
\rightarrow H^0(K,\tilde{T})^D$ soit dense et 
vérifie $I\otimes \hat{\mathbb{Z}} \cong  H^0(K,\tilde{T})^D$.
   \end{itemize}
   \end{theorem}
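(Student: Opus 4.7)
Consider the complex
$$T(K) \xrightarrow{\iota} \mathbb{P}^0(T) \xrightarrow{r'} H^0(K,\tilde T)^D,$$
where $r'$ is the map induced by the pairing (\ref{reciproc}) of the remark above; the composition vanishes by the very reciprocity encoded in Theorem \ref{poitoukalg}. That theorem asserts that the completed complex $T(K)_{\wedge} \to \mathbb{P}^0(T)_{\wedge} \to H^0(K,\tilde T)^D$ is exact at the middle with the right map surjective, and $H^0(K,\tilde T)^D$ is profinite (dual of a discrete torsion group). Lemma \ref{completion}(a) then yields that the induced map $r : A(\emptyset,T) \to H^0(K,\tilde T)^D$ has dense image, while Lemma \ref{completion}(b) identifies $\ker r$ with $A(\emptyset,T)_{\infty-\div}$. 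By Theorem \ref{rangtheo}(i), $\overline{A(\emptyset,T)}$ is finitely generated abelian, and so has no nonzero infinitely divisible element. Since the image in $\overline{A(\emptyset,T)}$ of an infinitely divisible element of $A(\emptyset,T)$ remains infinitely divisible, it must vanish; hence $A(\emptyset,T)_{\infty-\div} = A(\emptyset,T)_{\div}$, and $r$ descends to an injection $\overline{A(\emptyset,T)} \hookrightarrow H^0(K,\tilde T)^D$ with dense image.

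To identify the cokernel, I propose to compute $A(\emptyset,T)_{\wedge}$ directly. For any $n>0$, the Kummer sequence together with $H^1(K,T)=0=H^1(K_v,T)$ (Tsen, $K$ and $K_v$ being $C_1$) gives $T(K)/n \cong H^1(K,{}_nT)$ and likewise at each place. Since $\mathcal{T}(\mathcal{O}_v)$ is divisible at every $v$ where the smooth model exists (Hensel in characteristic zero with algebraically closed residue field), the restricted product degenerates: $\mathbb{P}^0(T)/n \cong \bigoplus_{v \in X^{(1)}} H^1(K_v,{}_nT)$. The Poitou-Tate sequence for the finite module ${}_nT$ (case $d=-1$ of \cite{diego1}, Th.~2.7) then supplies a natural isomorphism $A(\emptyset,T)/n \cong H^0(K,{}_n\tilde T)^D$, compatible with the transition maps as $n$ varies. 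Passing to $\varprojlim$ and using that the projective limit of duals is the dual of the injective limit gives $A(\emptyset,T)_{\wedge} \cong H^0(K,\tilde T)^D$. On the other hand, $A(\emptyset,T)_{\div}$ is killed modulo every $n$, so $A(\emptyset,T)_{\wedge} = \overline{A(\emptyset,T)}^{\wedge} \cong \hat{\mathbb{Z}}^{\rho} \oplus F$, with $F$ the finite torsion of $\overline{A(\emptyset,T)}$. The map $\overline{A(\emptyset,T)} = \mathbb{Z}^{\rho} \oplus F \hookrightarrow \hat{\mathbb{Z}}^{\rho} \oplus F = H^0(K,\tilde T)^D$ is then the natural completion inclusion, with cokernel $(\hat{\mathbb{Z}}/\mathbb{Z})^{\rho}$. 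This proves (ii), and the assertion $I \otimes \hat{\mathbb{Z}} \cong H^0(K,\tilde T)^D$ follows immediately.

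For (i), apply $H^0(K,-)$ to the short exact sequence $0 \to \hat T \to \hat T \otimes \mathbb{Q} \to \hat T \otimes \mathbb{Q}/\mathbb{Z} \to 0$. The middle term, being a $\mathbb{Q}$-vector space, has vanishing cohomology in positive degrees, yielding
$$0 \to H^0(K,\hat T) \otimes \mathbb{Q}/\mathbb{Z} \to H^0(K,\hat T \otimes \mathbb{Q}/\mathbb{Z}) \to H^1(K,\hat T) \to 0.$$
Since $k$ is algebraically closed, the cyclotomic character is trivial, so $\mathbb{Q}/\mathbb{Z}(-1) = \mathbb{Q}/\mathbb{Z}$ and $\hat T \otimes \mathbb{Q}/\mathbb{Z} = \tilde T$; moreover the left term simplifies to $(\mathbb{Q}/\mathbb{Z})^{\rho}$ because tensoring with $\mathbb{Q}/\mathbb{Z}$ annihilates the finite torsion of $H^0(K,\hat T)$. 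Dualizing produces
$$0 \to H^1(K,\hat T)^D \to H^0(K,\tilde T)^D \to \hat{\mathbb{Z}}^{\rho} \to 0.$$
Combined with $H^0(K,\tilde T)^D \cong \hat{\mathbb{Z}}^{\rho} \oplus F$ from the previous paragraph, the subgroup $H^1(K,\hat T)^D$ must absorb $F$ (torsion cannot embed into the torsion-free $\hat{\mathbb{Z}}^{\rho}$), and the residual surjection $\hat{\mathbb{Z}}^{\rho} \twoheadrightarrow \hat{\mathbb{Z}}^{\rho}$ between free $\hat{\mathbb{Z}}$-modules of equal finite rank is automatically an isomorphism. Therefore $H^1(K,\hat T)^D = F = (\overline{A(\emptyset,T)})_{\text{tors}}$, proving (i). The hardest technical point along the way will be verifying that the pairing-based map $r$ of (\ref{reciproc}) agrees with the one obtained through the $\wedge$-completion computation; this should follow from the functoriality of Poitou-Tate duality applied to the finite modules ${}_nT$.
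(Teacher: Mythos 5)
Your argument is correct. Its first half — identifying $\ker r$ with $A(\emptyset,T)_{\infty\text{-div}}=A(\emptyset,T)_{\text{div}}$ via Lemma~\ref{completion}~b) and the finite generation of $\overline{A(\emptyset,T)}$ from Theorem~\ref{rangtheo}, and getting density from Lemma~\ref{completion}~a) and Theorem~\ref{poitoukalg} — is exactly what the paper does. For the rest you take a genuinely different route: you compute $A(\emptyset,T)_{\wedge}$ level by level from the Poitou--Tate sequences for the finite modules ${}_nT$, identify $H^0(K,\tilde{T})^D$ with the profinite completion $\hat{\mathbb{Z}}^{\rho}\oplus F$ of $\overline{A(\emptyset,T)}$, and read off the cokernel and the torsion by matching the two decompositions. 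The paper instead uses only what is already established — that the image of $\overline{A(\emptyset,T)}$ is a dense finitely generated subgroup of rank $\rho$ of $H^0(K,\tilde{T})^D\cong\hat{\mathbb{Z}}^{\rho}\oplus H^1(K,\hat{T})^D$ (the target computed, as in your last paragraph, from the split sequence $0\to H^0(K,\hat{T})\otimes\mathbb{Q}/\mathbb{Z}\to H^0(K,\tilde{T})\to H^1(K,\hat{T})\to 0$) — and then invokes a purely group-theoretic lemma: any dense finitely generated subgroup $H$ of rank $\rho$ of $G=\hat{\mathbb{Z}}^{\rho}\oplus F$ contains $F$, satisfies $H\otimes\hat{\mathbb{Z}}=G$ and $G/H\cong(\hat{\mathbb{Z}}/\mathbb{Z})^{\rho}$. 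What the paper's route buys is precisely that it sidesteps the compatibility you flag at the end (that the pairing-defined $r$ agrees with the completion map); this is the one debt in your write-up, but it is easily discharged: $H^0(K,\tilde{T})^D=\varprojlim_n({}_nH^0(K,\tilde{T}))^D$ is a limit of groups killed by $n$, so $r$ automatically factors through $A(\emptyset,T)_{\wedge}$, and modulo $n$ it is by construction the map appearing in the Poitou--Tate sequence for ${}_nT$. What your route buys is a more intrinsic statement, namely $A(\emptyset,T)_{\wedge}\cong H^0(K,\tilde{T})^D$. Two small points: $H^0(K,\hat{T})$ is a subgroup of $\hat{T}\cong\mathbb{Z}^n$, hence already free, so there is no torsion to annihilate there; and your final step for (i) can be shortened by observing that the kernel $H^1(K,\hat{T})^D/F$ of the residual surjection is finite and injects into the torsion-free group $\hat{\mathbb{Z}}^{\rho}$, hence vanishes, without appealing to any $\hat{\mathbb{Z}}$-linearity of that surjection.
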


En particulier, ce th\'eor\`eme dit que le noyau de la fl\`eche de 
r\'eciprocit\'e est divisible modulo $T(K)$, et il d\'ecrit son conoyau.

\begin{proof}
\begin{itemize}
\item[$\bullet$] D'apr\`es le lemme~\ref{completion} b), le 
noyau de $r : A(\emptyset,T) \rightarrow H^0(K,\tilde{T})^D$ est 
$A(\emptyset,T)_{\infty-\div}$ puisqu'on a la suite exacte:
$$H^0(K,T)_{\wedge} \rightarrow \mathbb{P}^0(T)_{\wedge} \rightarrow H^0(K,\tilde{T})^D.$$ Il coïncide avec le sous-groupe divisible maximal de $A(\emptyset,T)$ car $\overline{A(\emptyset,T)}$ est de type fini. Cela montre l'injectivité de $ \overline{A(\emptyset,T)} \rightarrow H^0(K,\tilde{T})^D$. De plus, 
l'image de $\overline{A(\emptyset,T)}$ dans le groupe profini 
$H^0(K,\tilde{T})^D$ est dense d'apr\`es le lemme~\ref{completion} a) et le 
th\'eor\`eme~\ref{poitoukalg}.

\item[$\bullet$] Par ailleurs, on a une suite exacte scindée:
$$ 0  \rightarrow H^0(K,\hat{T}) \otimes \mathbb{Q}/\mathbb{Z} \rightarrow H^0(K,\tilde{T}) \rightarrow H^1(K,\hat{T}) \rightarrow 0$$
où $H^1(K,\hat{T})$ est fini et où $H^0(K,\hat{T})\cong \mathbb{Z}^{\rho}$. On en déduit que $H^0(K,\tilde{T})^D$ est isomorphe à $\hat{\mathbb{Z}}^{\rho} \oplus H^1(K,\hat{T})^D$ avec $H^1(K,\hat{T})^D$ fini. Toutes les assertions 
restantes du théorème découlent alors du lemme qui suit, puisque l'image de $A(\emptyset,T) \rightarrow H^0(K,\tilde{T})^D$ est un groupe abélien de type fini de rang $\rho$ qui est dense dans $H^0(K,\tilde{T})^D$.
\end{itemize}

\begin{lemma}
Soit $G$ un groupe topologique de la forme 
$\hat{\mathbb{Z}}^{\rho}\oplus F$ pour un certain entier naturel $\rho$ 
et un certain groupe abélien fini $F$. Soit $H$ un sous-groupe dense de $G$. 
Supposons que $H$ est de type fini et de rang $\rho$. Alors:
\begin{itemize}
\item[(i)] le groupe $H$ contient $F$,
\item[(ii)] on a l'égalité $H \otimes \hat{\mathbb{Z}}= G$,
\item[(iii)] on a un isomorphisme $G/H \cong (\hat{\mathbb{Z}}/\mathbb{Z})^{\rho}$.
 \end{itemize}
\end{lemma}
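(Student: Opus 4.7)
L'id\'ee g\'en\'erale est de tout ramener \`a la comparaison, pour chaque entier $n\geq 1$, des quotients finis $H/nH$ et $G/nG$, en exploitant \`a la fois la densit\'e de $H$ et la structure profinie de $G$. Puisque $F$ est fini et $\hat{\mathbb{Z}}/n\hat{\mathbb{Z}}\cong\mathbb{Z}/n$, le sous-groupe $nG=n\hat{\mathbb{Z}}^{\rho}\oplus nF$ est d'indice fini dans $G$, donc ferm\'e et ouvert; la densit\'e de $H$ entra\^{\i}ne alors la surjectivit\'e du morphisme naturel $H/nH\to G/nG$ pour tout $n\geq 1$.

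Pour \'etablir (i), j'observerai que la torsion $T$ de $H$ s'injecte dans celle de $G$, \`a savoir $F$, puisque $\hat{\mathbb{Z}}^{\rho}$ est sans torsion. J'appliquerai la surjectivit\'e ci-dessus avec $n=|F|$, de sorte que $nF=0$ et $nT=0$: les groupes $H/nH\cong(\mathbb{Z}/n)^{\rho}\oplus T$ et $G/nG\cong(\mathbb{Z}/n)^{\rho}\oplus F$ ont alors pour cardinaux respectifs $n^{\rho}|T|$ et $n^{\rho}|F|$, et la surjection impose $|T|\geq|F|$, d'o\`u $T=F$. L'inclusion $F\subset H$ annonc\'ee en (i) en r\'esulte.

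Une fois (i) acquise, la d\'ecomposition $H=L\oplus F$ avec $L$ libre de rang $\rho$ montre que $H/nH$ et $G/nG$ sont tous deux isomorphes \`a $(\mathbb{Z}/n)^{\rho}\oplus F/nF$, donc de m\^eme cardinal; la surjection pr\'ec\'edente est ainsi un isomorphisme pour chaque $n$. En passant \`a la limite projective, et en utilisant que $H$ est de type fini (si bien que $H\otimes\hat{\mathbb{Z}}=\varprojlim_n H/nH$) et que $G=\varprojlim_n G/nG$ (\'etant profini), on obtient (ii). Pour (iii), je quotienterai par $F\subset H$ pour me ramener au cas $F=0$: il restera \`a voir que si $L\cong\mathbb{Z}^{\rho}$ est dense dans $\hat{\mathbb{Z}}^{\rho}$ avec $L\otimes\hat{\mathbb{Z}}\cong\hat{\mathbb{Z}}^{\rho}$, alors $\hat{\mathbb{Z}}^{\rho}/L\cong(\hat{\mathbb{Z}}/\mathbb{Z})^{\rho}$. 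Or, par (ii), toute $\mathbb{Z}$-base de $L$ est une base topologique du $\hat{\mathbb{Z}}$-module $\hat{\mathbb{Z}}^{\rho}$; apr\`es changement de base, l'inclusion s'identifie \`a l'inclusion diagonale $\mathbb{Z}^{\rho}\subset\hat{\mathbb{Z}}^{\rho}$, dont le conoyau est bien $(\hat{\mathbb{Z}}/\mathbb{Z})^{\rho}$.

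L'obstacle principal me para\^{\i}t concentr\'e dans l'\'etape (i), o\`u il s'agit d'extraire une information arithm\'etique pr\'ecise (l'\'egalit\'e $T=F$) \`a partir du seul fait topologique de densit\'e; une fois $F$ captur\'e dans $H$, les parties (ii) et (iii) rel\`event de manipulations essentiellement formelles sur les compl\'et\'es profinis et les changements de base.
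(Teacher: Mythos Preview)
Your proof is correct and the overall shape is the same as the paper's: exploit density of $H$ at every finite level, deduce $F\subset H$, then identify a $\mathbb{Z}$-basis of the free part of $H$ with a $\hat{\mathbb{Z}}$-basis of $G$. The genuine difference lies in how you establish (i). The paper quotients by $H_{\mathrm{tors}}$ to reduce to $H$ torsion-free, then argues \emph{prime by prime}: projecting to $\mathbb{Z}_p^{\rho}\oplus F\{p\}$, the images of a $\mathbb{Z}$-basis of $H$ must be $\mathbb{Z}_p$-linearly independent (else their closure would have $\mathbb{Z}_p$-rank $<\rho$, contradicting density), so $\rho$ free elements generate $\mathbb{Z}_p^{\rho}\oplus F\{p\}$, forcing $F\{p\}=0$. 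You instead choose the single integer $n=|F|$ and compare cardinalities $|H/nH|=n^{\rho}|H_{\mathrm{tors}}|$ and $|G/nG|=n^{\rho}|F|$ to get $|H_{\mathrm{tors}}|\geq |F|$ from the surjection. Your route is more elementary and settles (i) in one stroke; the paper's route has the payoff that the $\mathbb{Z}_p$-basis statement for each $p$ already \emph{is} (ii), whereas you recover (ii) by a separate cardinality-then-$\varprojlim$ step. For (iii) the two arguments coincide.
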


On peut supposer que $H$ est sans torsion, quitte \`a remplacer $G$ et 
$H$ par leur quotient par $H_{\rm tors}$. 
Soit  $(e_1,...,e_{\rho})$ une $\mathbb{Z}$-base de $H$. 
Soit $p$ un nombre premier. 
L'image de $H$ dans $\mathbb{Z}_p^{\rho} \oplus F\{p\}$ doit être dense. 
Par conséquent, l'image de $(e_1,...,e_{\rho})$ 
doit être $\mathbb{Z}_p$-libre (sinon elle engendrerait un sous-groupe 
de rang $<\rho$ dans $G$),  d'o\`u on déduit que $F\{p\}=0$. 
Ceci étant vrai pour tout $p$, on obtient que $F=0$, ce qui prouve (i). 
Alors, l'image de $(e_1,...,e_{\rho})$ dans $\mathbb{Z}_p^{\rho}$ 
doit être une $\mathbb{Z}_p$-base de $\mathbb{Z}_p^{\rho}$ quel que soit $p$, 
d'o\`u on voit que $(e_1,...,e_{\rho})$ est une 
$\hat{\mathbb{Z}}$-base de $G=\hat{\mathbb{Z}}^{\rho}$, 
ce qui prouve (ii) et (iii).

\end{proof}

\begin{corollary}
On a une suite exacte:
$$0\rightarrow H^1(K,\hat{T})^D\rightarrow \overline{A(\emptyset,T)} \rightarrow \mathbb{Z}^{\rho} \rightarrow 0 ,$$
 dans laquelle le groupe $H^1(K,\hat{T})$ est fini. En particulier, le groupe $A(\emptyset,T)$ est divisible si, et seulement si, $T$ est anisotrope et $H^1(K,\hat{T})=0$.
\end{corollary}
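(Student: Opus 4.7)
Le plan est court puisque presque tout le travail est d\'ej\`a effectu\'e dans les \'enonc\'es pr\'ec\'edents. Je combinerais le th\'eor\`eme~\ref{rangtheo}~(i), qui affirme que $\overline{A(\emptyset,T)}$ est un groupe ab\'elien de type fini de rang $\rho = \text{rg}\, H^0(K,\hat{T})$, avec le th\'eor\`eme~\ref{kalgclos}~(i), qui identifie sa partie de torsion \`a $H^1(K,\hat{T})^D$, groupe dont la finitude est d\'ej\`a acquise dans la d\'emonstration du th\'eor\`eme~\ref{kalgclos} (o\`u il est \'ecrit comme somme directe du dual d'un r\'eseau et d'un groupe fini).

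\'Etant donn\'e que tout groupe ab\'elien de type fini se d\'ecompose en somme directe de sa partie de torsion et d'un groupe libre de rang maximal, je d\'eduirais l'isomorphisme (m\^eme canoniquement scind\'e)
$$\overline{A(\emptyset,T)} \cong H^1(K,\hat{T})^D \oplus \mathbb{Z}^{\rho},$$
ce qui fournit imm\'ediatement la suite exacte annonc\'ee.

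Pour la caract\'erisation de la divisibilit\'e~: par d\'efinition m\^eme de $\overline{A(\emptyset,T)} = A(\emptyset,T)/A(\emptyset,T)_{\div}$, le groupe $A(\emptyset,T)$ est divisible si et seulement si $\overline{A(\emptyset,T)} = 0$, ce qui \'equivaut \`a l'annulation simultan\'ee de $\rho$ et de $H^1(K,\hat{T})^D$ d'apr\`es la d\'ecomposition ci-dessus. Or $\rho = \text{rg}\, H^0(K,\hat{T})$ s'annule si et seulement si le module galoisien $\hat{T}$ n'a pas d'invariant non nul, c'est-\`a-dire si et seulement si le tore $T$ est anisotrope; et $H^1(K,\hat{T})^D = 0$ \'equivaut bien s\^ur \`a $H^1(K,\hat{T}) = 0$.

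Je ne vois pas v\'eritablement d'obstacle technique dans cette d\'eduction~: elle se r\'eduit \`a un assemblage des deux \'enonc\'es structuraux pr\'ec\'edents via la classification des groupes ab\'eliens de type fini. Le seul point m\'eritant attention est de bien noter que la finitude de $H^1(K,\hat{T})$ n'est pas un r\'esultat nouveau ici mais un fait d\'ej\`a utilis\'e (et admis) dans la preuve du th\'eor\`eme~\ref{kalgclos}, et qu'elle se justifie par ailleurs de mani\`ere \'el\'ementaire en trivialisant $\hat T$ par une extension galoisienne finie $L/K$ et en observant que $H^1(L,\hat T) = 0$ car toute fl\`eche continue d'un groupe profini vers $\hat T$ discret et sans torsion est nulle.
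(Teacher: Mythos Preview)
Votre approche est correcte et co\"incide avec celle (implicite) du texte~: le corollaire n'y est pas d\'emontr\'e s\'epar\'ement, mais d\'ecoule imm\'ediatement de la combinaison du th\'eor\`eme~\ref{rangtheo}~(i) et du th\'eor\`eme~\ref{kalgclos}~(i) via la structure des groupes ab\'eliens de type fini, exactement comme vous le faites. Un point mineur~: l'affirmation parenth\'etique que le scindage est \emph{canonique} n'est pas justifi\'ee (pour un groupe ab\'elien de type fini, le sous-groupe de torsion est canonique mais un suppl\'ementaire libre ne l'est pas en g\'en\'eral), mais cela n'affecte en rien l'\'enonc\'e, qui ne demande que l'existence de la suite exacte.
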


\begin{remarque}
Le groupe $H^0(K,\tilde T)$ peut \'egalement \^etre interpr\'et\'e de la 
mani\`ere suivante. Posons $\ov T=T \times_K \ov K$. 
Comme $\pic \ov T=0$, le module galoisien $H^1(\ov T, \mu_n)$ 
est isomorphe \`a $\ov K[T]^*/\ov K[T]^{*^n} \cong \hat T/n$, et 
le module galoisien
$H^1(\ov T,\Z/n) \cong \hat T \otimes \Z/n(-1)$ s'identifie alors 
\`a $_n \tilde T$. 
La suite spectrale de Hochschild-Serre jointe au fait que $K$ est de dimension
cohomologique $1$ permet alors d'identifier $H^1(T,\Z/n)/H^1(K,\Z/n)$ 
avec $H^0(K, H^1(\ov T,\Z/n)) \cong _n H^0(K,\tilde T)$. On peut alors 
voir l'obstruction pour un point ad\'elique $(P_v)_{v \in X^{(1)}}$ \`a 
provenir d'un point rationnel comme une obstruction de r\'eciprocit\'e, 
analogue \`a la classique obstruction de Brauer-Manin sur les corps 
de nombres~: 
le point $(P_v)$ est dans l'image de $T(K)$ modulo divisible si et seulement 
si on a 
$$\sum_{v \in X^{(1)}} \alpha(P_v)=0$$
pour tout $\alpha \in H^1(T,\Z/n)$, o\`u $\alpha(P_v) \in H^1(K_v,\Z/n) 
\cong \Z/n$.
\end{remarque}

\subsection{Le sous-groupe de torsion de $A(\emptyset,T)$}\label{ctors}

Le but de ce paragraphe est de d\'ecrire plus pr\'ecis\'ement 
la partie de torsion de $A(\emptyset,T)_{\div}$, ce qui d\'eterminera 
compl\`etement la structure de $A(\emptyset,T)$ \`a un sous-groupe 
uniquement divisible pr\`es. 

\smallskip

Plus pr\'ecis\'ement, nous allons maintenant construire un 
accouplement naturel:
\begin{equation}\label{acc}
 A(\emptyset,T)_{\text{tors}}  \times \varprojlim_n H^1(K,{_n}\tilde{T}) \rightarrow \mathbb{Q}/\mathbb{Z}.
 \end{equation}

Pour ce faire, nous commençons par introduire quelques notations et rappels:

\begin{notation}
Soit $U_0$ un ouvert non vide \emph{strict} de $X$ tel que $T$ s'étende en un tore $\mathcal{T}$ sur $U_0$. Pour chaque ouvert non vide $U$ de $U_0$, on note $j_U: U \hookrightarrow X$ l'immersion ouverte et on définit la cohomologie à support compact de $\mathcal{T}$ par $H^i_c(U,\mathcal{T}) := H^i(X,{(j_U)}_!\mathcal{T})$. Si on note $K_v^h$ l'hensélisé de $K$ par rapport à $v$ pour chaque $v\in X^{(1)}$, on rappelle la suite exacte (\cite{dhsza1}, Prop. 3.1):
\begin{equation}\label{localisation}
... \rightarrow H^r_c(U,\mathcal{T}) \rightarrow H^i(U,\mathcal{T}) \rightarrow \bigoplus_{v\in X \setminus U} H^i(K_v^h,T) \rightarrow ...
\end{equation}
Lorsque $V$ est un ouvert non vide de $U$, on a un diagramme commutatif (\cite{dhct}, Prop. 4.3): 
\begin{equation}\label{3arrow}
\xymatrix{
\bigoplus_{v\in X \setminus U} H^i(K_v^h,T) \ar[r]\ar[d] & H^{i+1}_c(U,\mathcal{T})\\
\bigoplus_{v\in X \setminus V} H^i(K_v^h,T) \ar[r] & H^{i+1}_c(V,\mathcal{T}), \ar[u]
}
\end{equation}
o\`u la fl\`eche verticale de gauche est donn\'ee par 
$(x_v)_{v \in X \setminus U} \mapsto ((x_v),0,...,0)$.
\end{notation}

  Donnons-nous maintenant un ouvert $U$ non vide de $U_0$. Soient $v\in X^{(1)}$ et $V=U\setminus \{v\}$. On peut alors définir un morphisme $T(K_v^h)\rightarrow H^1_c(U,\mathcal{T})$ par composition:
$$f_v: T(K_v^h) \rightarrow H^1_c(V,\mathcal{T})\rightarrow H^1_c(U,\mathcal{T}).$$
Ici, le morphisme $T(K_v^h) \rightarrow H^1_c(V,\mathcal{T})$ vient de la suite (\ref{localisation})  et donc de l'identification:
$$T(K_v^h) \cong H^1_v(\mathcal{O}_v^h,{j_V}_!\mathcal{T}) \cong H^1_v(X,{j_V}_!\mathcal{T})$$
et du morphisme naturel $H^1_v(X,{j_V}_!\mathcal{T}) \rightarrow H^1_c(V,\mathcal{T})$. \\

Supposons maintenant que $v\in U^{(1)}$. 
Dans ce cas, on a
$H^0(\calo_v^h,{j_U}_! \mathcal{T})=\mathcal{T}(\mathcal{O}_v^h)$ et
$H^0(K_v^h, {j_U}_! \mathcal{T})=T(K_v^h)$. De plus, on a
$H^1_v(X, {j_U}_! \mathcal{T})=H^1_v(\calo_v^h,{j_U}_! \mathcal{T})$ par
excision. La suite de localisation pour l'immersion ouverte
$\spec K_v^h \hookrightarrow \spec \calo_v^h$ donne alors une
suite exacte
$$\mathcal{T}(\mathcal{O}_v^h) \to T(K_v^h) \to H^1_v(X,{j_U}_!\mathcal{T}) ,$$
d'o\`u on tire un diagramme commutatif:\\

\centerline{\xymatrix{
T(K_v^h) \ar[r]^-{\cong} \ar[d] & H^1_v(X,{j_V}_!\mathcal{T}) \ar[r]\ar[d]& H^1_c(V,\mathcal{T})\ar[d] \\
T(K_v^h)/\mathcal{T}(\mathcal{O}_v^h) \ar@{^{(}->}[r] &H^1_v(X,{j_U}_!\mathcal{T})\ar[r] & H^1_c(U,\mathcal{T}).
}}
 On en déduit que $f_v(\mathcal{T}(\mathcal{O}_v^h))=0$.\\

En sommant tous les $f_v$, on définit donc un morphisme :
$$\prod'_{v\in X^{(1)}} T(K_v^h) \rightarrow H^1_c(U,\mathcal{T}).$$
La commutativité du diagramme (\ref{3arrow}) permet alors de passer à la limite projective sur $U$ pour obtenir un morphisme:
$$\prod'_{v\in X^{(1)}} T(K_v^h) \rightarrow \varprojlim_U H^1_c(U,\mathcal{T}).$$
Remarquons que, si $x$ est un élément de $T(K) \subseteq \prod'_{v\in X^{(1)}} T(K_v^h)$, il appartient à $\mathcal{T}(U)$ pour un certain ouvert non vide $U$ de $U_0$. On déduit alors de la suite (\ref{localisation}) que son image dans $H^1_c(V,\mathcal{T})$ est nulle pour chaque ouvert non vide $V$ de $U$. On obtient donc un morphisme:
$$\frac{\prod'_{v\in X^{(1)}} T(K_v^h)}{T(K)} \rightarrow \varprojlim_U H^1_c(U,\mathcal{T}),$$
d'où finalement un morphisme :
$$ \left( \frac{\prod'_{v\in X^{(1)}} T(K_v^h)}{T(K)}\right)_{{\rm tors}} \rightarrow \varinjlim_n \varprojlim_U {_n}H^1_c(U,\mathcal{T}) \cong \varinjlim_n \varprojlim_U H^1_c(U,{_n}\mathcal{T})$$
via l'identification ${_n}H^1_c(U,\mathcal{T}) \cong H^1_c(U,{_n}\mathcal{T})$ (le groupe $H^0_c(U,\mathcal{T})=\ker [{\mathcal T}(U) \to \oplus_{v \not \in U} 
T(K_v)]$ est nul car $U \neq X$). 

\begin{lemma}
On a l'égalité:
$$\left( \frac{\prod'_{v\in X^{(1)}} T(K_v^h)}{T(K)}\right)_{{\rm tors}} = A(\emptyset,T)_{{\rm tors}}.$$
\end{lemma}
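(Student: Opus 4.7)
L'idée est de comparer $B_h := \left(\prod'_{v \in X^{(1)}} T(K_v^h)\right) / T(K)$ à $A(\emptyset,T)$ via l'inclusion naturelle $T(K_v^h) \hookrightarrow T(K_v)$, qui induit un morphisme $\Phi : B_h \to A(\emptyset,T)$. Ce morphisme est visiblement injectif : si $(x_v) \in \prod'_{v} T(K_v^h)$ et $y \in T(K)$ coïncident dans $\prod'_{v} T(K_v)$, l'injectivité de chaque inclusion $T(K_v^h) \hookrightarrow T(K_v)$ impose $x_v = y$ dans $T(K_v^h)$, et donc $[x] = 0$ dans $B_h$. Le vrai travail consiste à établir que $\Phi$ se restreint en une surjection sur les sous-groupes de torsion.

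Le point clé est l'énoncé local suivant : \emph{pour tout $v \in X^{(1)}$ et tout entier $n \geq 1$, si $x \in T(K_v)$ vérifie $nx \in T(K_v^h)$, alors $x \in T(K_v^h)$.} Pour l'établir, on considère la fibre $F$ de $[n] : T \to T$ au-dessus de $nx$. Comme la caractéristique est nulle, $[n]$ est une isogénie finie étale, donc $F$ est un $K_v^h$-schéma fini étale possédant $x$ comme $K_v$-point. Puisque $K_v^h$ est hensélien et que $K_v$ en est le complété, le lemme de Krasner fournit une équivalence entre $K_v^h$-algèbres étales finies et $K_v$-algèbres étales finies via $L \mapsto L \otimes_{K_v^h} K_v$ (autrement dit, les extensions séparables finies ne prolifèrent pas sous complétion). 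On en déduit immédiatement $F(K_v^h) = F(K_v)$, et donc $x \in T(K_v^h)$.

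La surjectivité sur la torsion en résulte. Soit $[x] = [(x_v)] \in A(\emptyset,T)_{\text{tors}}$ : il existe $n \geq 1$ et $y \in T(K)$ tels que $nx_v = y$ dans $T(K_v)$ pour tout $v \in X^{(1)}$. Comme $y \in T(K) \subset T(K_v^h)$, l'énoncé local donne $x_v \in T(K_v^h)$ pour chaque $v$. L'intégralité à presque toutes les places est automatique : sur l'ouvert $U_0$ où $T$ s'étend en un tore $\mathcal{T}$, l'égalité évidente par valuation $\mathcal{O}_v \cap K_v^h = \mathcal{O}_v^h$ entraîne $\mathcal{T}(\mathcal{O}_v) \cap T(K_v^h) = \mathcal{T}(\mathcal{O}_v^h)$. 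Ainsi $(x_v)$ définit un élément de $\prod'_{v} T(K_v^h)$ dont le $n$-ième multiple vaut $y \in T(K)$, d'où un relèvement de $[x]$ dans $(B_h)_{\text{tors}}$. L'obstacle principal est bien sûr l'égalité $F(K_v^h) = F(K_v)$ pour la fibre étale, conséquence classique mais non triviale de l'équivalence entre les extensions séparables finies d'un corps hensélien et de son complété.
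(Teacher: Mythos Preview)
Your proof is correct and follows the same approach as the paper's. The paper is more terse at the key local step---it simply invokes that $K_v^h$ is algebraically closed in $K_v$ to deduce $\tilde{x}_v \in T(K_v^h)$---and omits the integrality check at almost all places that you spell out explicitly.
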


\begin{proof}
On a une inclusion évidente $\left( \frac{\prod'_{v\in X^{(1)}} T(K_v^h)}{T(K)}\right)_{\text{tors}} \subseteq A(\emptyset,T)_{\text{tors}}$. Soit donc $(x_v)_{v\in X^{(1)}}\in A(\emptyset,T)_{\text{tors}}$. Soit $(\tilde{x}_v)$ un relèvement de $(x_v)$ à $ \mathbb{P}^0(T)$. Soient $n\geq 1$ et $x\in T(K)$ tels que $n\tilde{x}_v=x$ pour chaque $v\in X^{(1)}$. Comme $K_v^h$ est algébriquement clos dans $K_v$, on en déduit que $\tilde{x}_v \in T(K_v^h)$ pour chaque $v$. Autrement dit, $(x_v) \in \left( \frac{\prod'_{v\in X^{(1)}} T(K_v^h)}{T(K)}\right)_{\text{tors}}$. 
\end{proof}

On obtient donc un morphisme:
$$ A(\emptyset,T)_{\text{tors}} \rightarrow \varinjlim_n \varprojlim_U {_n}H^1_c(U,\mathcal{T}) \cong \varinjlim_n \varprojlim_U H^1_c(U,{_n}\mathcal{T}).$$
Pour chaque $n>0$, munissons ${_n}A(\emptyset,T)$ (resp. ${_n}\mathbb{P}^0(T)$) de la topologie induite par $A(\emptyset,T)$ (resp. $\mathbb{P}^0(T)$) et $H^1(K,{_n}\tilde{T})$ de la topologie discrète. Munissons ensuite le 
groupe $A(\emptyset,T)_{\rm tors} = \varinjlim_n {_n}A(\emptyset,T)$ (resp. $\mathbb{P}^0(T)_{\rm tors} = \varinjlim_n {_n}\mathbb{P}^0(T)$) de la topologie limite inductive et $\varprojlim_n H^1(K,{_n}\tilde{T})$ de la topologie 
limite projective.
En exploitant la dualité de Poincaré, on a un isomorphisme~:

$$ \left( \varinjlim_n \varprojlim_U H^1_c(U,{_n}\mathcal{T})\right)^D  \cong \varprojlim_n \varinjlim_U H^1(U,{_n}\tilde{\mathcal{T}}) \cong \varprojlim_n H^1(K,{_n}\tilde{T})$$
puisque ${_n}\tilde{\mathcal{T}} \cong \hat{\mathcal{T}} \otimes \mathbb{Z}/n\mathbb{Z}(-1)$,
d'où l'accouplement de groupes topologiques (\ref{acc}).

\begin{lemma}\label{ators}
On a une suite exacte :
$$0 \rightarrow \frac{\mathbb{P}^0(T)_{{\rm tors}}}{T(K)_{{\rm tors}}}\rightarrow  A(\emptyset,T)_{\rm tors}\rightarrow \varinjlim_n \Sha^1(K,{_n}T) \rightarrow 0.$$
\end{lemma}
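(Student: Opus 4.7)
The plan is to derive the claimed four-term exact sequence from the tautological short exact sequence
$$0 \to T(K) \to \mathbb{P}^0(T) \to A(\emptyset,T) \to 0$$
by applying the snake lemma to multiplication by $n$, which produces, for every $n\geq 1$, the six-term exact sequence
$$0 \to {_n}T(K) \to {_n}\mathbb{P}^0(T) \to {_n}A(\emptyset,T) \xrightarrow{\delta_n} T(K)/n \to \mathbb{P}^0(T)/n \to A(\emptyset,T)/n \to 0.$$
It then remains to identify the image of $\delta_n$ (equivalently, the kernel of $T(K)/n \to \mathbb{P}^0(T)/n$) with $\Sha^1(K,{_n}T)$, and to pass to the direct limit over $n$.

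For the identification, I first use that $K$ is $C_1$ by Tsen's theorem, so $H^1(K,T)=0$, and similarly $H^1(K_v,T)=0$ for every $v\in X^{(1)}$ (since $K_v$ is complete for a discrete valuation with algebraically closed residue field of characteristic zero). The Kummer sequence $0\to {_n}T\to T\xrightarrow{n} T\to 0$ then yields canonical isomorphisms $T(K)/n \cong H^1(K,{_n}T)$ and $T(K_v)/n \cong H^1(K_v,{_n}T)$. Next, as already exploited earlier in this section, for $v$ in a nonempty open $U\subseteq X$ over which $T$ extends to a torus $\mathcal{T}$, the group $\mathcal{T}(\calo_v)$ is $n$-divisible by Hensel's lemma (the residue field being algebraically closed of characteristic zero), hence $\mathcal{T}(\calo_v)\subseteq nT(K_v)$. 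This ensures that almost all components of an adelic element die modulo $n$, so
$$\mathbb{P}^0(T)/n \;\cong\; \bigoplus_{v\in X^{(1)}} T(K_v)/n \;\cong\; \bigoplus_{v\in X^{(1)}} H^1(K_v,{_n}T).$$

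Under these identifications, the arrow $T(K)/n\to \mathbb{P}^0(T)/n$ becomes the natural localization $H^1(K,{_n}T)\to \bigoplus_{v\in X^{(1)}} H^1(K_v,{_n}T)$, whose kernel is by definition $\Sha^1(K,{_n}T)$ (a class in $H^1(K,{_n}T)$ is unramified outside a finite set and hence locally trivial at almost every place, so that the target direct sum absorbs the image). The snake lemma sequence thus simplifies, for each $n$, to a short exact sequence
$$0 \to {_n}\mathbb{P}^0(T)/{_n}T(K) \to {_n}A(\emptyset,T) \to \Sha^1(K,{_n}T) \to 0.$$
Since the direct limit is an exact functor on abelian groups, applying $\varinjlim_n$ yields
$$0 \to \mathbb{P}^0(T)_{\rm tors}/T(K)_{\rm tors} \to A(\emptyset,T)_{\rm tors} \to \varinjlim_n \Sha^1(K,{_n}T) \to 0,$$
which is the statement of the lemma.

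The only mildly delicate step is the identification of $\mathbb{P}^0(T)/n$ with a direct sum rather than a mere restricted product; this rests precisely on the $n$-divisibility of $\mathcal{T}(\calo_v)$ which is specific to the case where $k$ is algebraically closed of characteristic zero. Everything else is formal: a snake lemma, Kummer theory, and the exactness of $\varinjlim$.
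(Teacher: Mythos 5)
Your proof is correct and follows essentially the same route as the paper: the paper's map $h_n$ (sending $(x_v)$ with $nx_v=x\in T(K)$ to $x$) is exactly the snake-lemma connecting homomorphism $\delta_n$ you invoke, and the identification of its image with $\Sha^1(K,{_n}T)$ via Kummer theory and the vanishing of $H^1(K,T)$ and $H^1(K_v,T)$, followed by exactness of $\varinjlim$, is the same in both arguments. The only cosmetic difference is that you additionally identify $\mathbb{P}^0(T)/n$ with $\bigoplus_v T(K_v)/n$, whereas the paper works directly with the kernel of $T(K)/n\to\prod_v T(K_v)/n$; both give $\Sha^1(K,{_n}T)$.
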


\begin{proof}
Pour $n\geq 1$, soit: $$h_n: {_n}A(\emptyset,T) \rightarrow \text{Ker} \left( T(K)/n \rightarrow \prod_{v\in X^{(1)}} T(K_v)/n \right)$$
 qui à une famille $(x_v) \in \mathbb{P}^0(T)$ telle qu'il existe $x\in T(K)$ vérifiant $nx_v=x$ pour tout $v$ associe $x$. Le morphisme $h_n$ est surjectif et son noyau est $\frac{{_n}\mathbb{P}^0(T)}{{_n}T(K)}$. On obtient donc une suite exacte :
 $$0 \rightarrow \frac{{_n}\mathbb{P}^0(T)}{{_n}T(K)} \rightarrow {_n}A(\emptyset,T) \rightarrow \text{Ker} \left( T(K)/n \rightarrow \prod_{v\in X^{(1)}} T(K_v)/n \right) \rightarrow 0.$$
Via les identifications $T(K)/n \cong H^1(K,{_n}T)$ et $T(K_v)/n \cong H^1(K_v,{_n}T)$ obtenues grâce à la suite de Kummer $0 \rightarrow {_n}T \rightarrow T \rightarrow T \rightarrow 0$ et à l'annulation de $H^1(K,T)$ et de $H^1(K_v,T)$ 
(qui viennent de ce que $K$ et $K_v$ sont de dimension cohomologique 
$1$), on a un isomorphisme $\text{Ker} \left( T(K)/n \rightarrow \prod_{v\in X^{(1)}} T(K_v)/n \right) \cong \Sha^1(K,{_n}T)$, d'où la suite exacte:
 $$0 \rightarrow \frac{{_n}\mathbb{P}^0(T)}{{_n}T(K)} \rightarrow {_n}A(\emptyset,T) \rightarrow \Sha^1(K,{_n}T) \rightarrow 0.$$
 En passant à la limite inductive sur $n$, on a la suite exacte:
 $$0 \rightarrow \varinjlim_n \frac{{_n}\mathbb{P}^0(T)}{{_n}T(K)} \rightarrow A(\emptyset,T)_{\rm tors} \rightarrow \varinjlim_n \Sha^1(K,{_n}T) \rightarrow 0.$$
 Il suffit alors d'identifier les groupes $\varinjlim_n \frac{{_n}\mathbb{P}^0(T)}{{_n}T(K)}$ et $\frac{\mathbb{P}^0(T)_{\text{tors}}}{T(K)_{\text{tors}}}$, ce qui vient à nouveau de l'exactitude de la limite inductive.
\end{proof}

\begin{lemma}\label{deuxacc}
On a des accouplements parfaits de groupes topologiques :
\begin{gather} \label{ev}
\frac{\mathbb{P}^0(T)_{{\rm tors}}}{T(K)_{{\rm tors}}} \times \frac{\varprojlim_n H^1(K,{_n}\tilde{T})}{\varprojlim_n \Sha^1(K,{_n}\tilde{T})} \rightarrow \mathbb{Q}/\mathbb{Z},\\
\label{sha}
\varinjlim_n \Sha^1(K,{_n}T) \times\varprojlim_n \Sha^1(K,{_n}\tilde{T}) \rightarrow\mathbb{Q}/\mathbb{Z}.
\end{gather}
\end{lemma}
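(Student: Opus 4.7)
The plan is to construct both pairings as the two pieces of the pairing (\ref{acc}) reflecting the sub/quotient structure of $A(\emptyset,T)_{\rm tors}$ given by Lemme \ref{ators}, and then to check perfectness by reducing at each finite level $n$ to Poitou-Tate duality for the finite Galois module $_n T$ (cas $d=-1$ de \cite{diego1}, Th.~2.7).

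First I would produce, at each level $n\ge 1$, a commutative diagram
$$\xymatrix@R=1.2em{
0 \ar[r] & \dfrac{{_n}\mathbb P^0(T)}{{_n}T(K)} \ar[r]\ar[d]_{\sim} & {_n}A(\emptyset,T) \ar[r]\ar[d]_{\sim} & \Sha^1(K,{_n}T) \ar[r]\ar[d]_{\sim} & 0 \\
0 \ar[r] & \bigl(H^1(K,{_n}\tilde T)/\Sha^1(K,{_n}\tilde T)\bigr)^D \ar[r] & H^1(K,{_n}\tilde T)^D \ar[r] & \Sha^1(K,{_n}\tilde T)^D \ar[r] & 0
}$$
whose top row is the one used in the proof of Lemme \ref{ators} and whose bottom row is the dual of the filtration of $H^1(K,{_n}\tilde T)$ by $\Sha^1(K,{_n}\tilde T)$. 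The right-hand vertical arrow is the Poitou-Tate duality $\Sha^1(K,{_n}T)\cong\Sha^1(K,{_n}\tilde T)^D$. The central vertical arrow is induced by the pairing (\ref{acc}) restricted to $n$-torsion; its bijectivity is a finite-level avatar of Th.~\ref{poitoukalg}, equivalent to the exactness of the middle segment of the Poitou-Tate nine-term sequence for $_n T$. A diagram chase (the five lemma) then forces the left-hand vertical arrow to be a bijection as well.

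Next I pass to the limit. The functor $\varinjlim_n$ applied to the top row preserves exactness. The functor $\varprojlim_n$ applied to the bottom row also preserves exactness, because each $\Sha^1(K,{_n}\tilde T)$ and each term of that row is finite and the transition systems satisfy Mittag-Leffler. The limit of the right-hand identification yields the pairing (\ref{sha}) between the discrete torsion group $\varinjlim_n \Sha^1(K,{_n}T)$ and the profinite group $\varprojlim_n \Sha^1(K,{_n}\tilde T)$, while the limit of the left-hand identification yields the pairing (\ref{ev}).

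The final step is to verify that these are perfect pairings of \emph{topological} groups. One argues as in the proof of Th.~\ref{poitoukalg}: the various strictness properties follow from \cite{hewitt}, Th.~5.29, and \cite{dhsza2}, Lemme~2.4 then guarantees that the Pontryagin dual of each exact sequence of topological groups remains exact. The main obstacle is the compatibility invoked at level $n$, namely that the pairing (\ref{acc}), built from the Poincar\'e duality between $H^1_c(U,\mathcal T)$ and $H^1(U,\tilde{\mathcal T})$, coincides with the standard Poitou-Tate pairing for the finite module $_n T$; this is essentially tautological once one recognizes that both arise from the same local dualities of Lemme \ref{localkalg}, but writing the identification out cleanly requires a careful bookkeeping through the $H^1_c$-identifications used to define (\ref{acc}).
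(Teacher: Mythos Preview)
Your approach inverts the paper's logic and, in doing so, leans on a step that is not as automatic as you suggest. The paper proves the two pairings of the lemma \emph{directly}, without ever invoking the pairing~(\ref{acc}): for~(\ref{ev}) it writes the Poitou-Tate sequence for the finite module $_n\tilde T$,
\[
0 \rightarrow \Sha^1(K,{_n}\tilde T) \rightarrow H^1(K,{_n}\tilde T) \rightarrow \bigoplus_v H^1(K_v,{_n}\tilde T) \rightarrow {_n}T(K)^D \rightarrow 0,
\]
reads off $\dfrac{H^1(K,{_n}\tilde T)}{\Sha^1(K,{_n}\tilde T)} \cong \Bigl(\dfrac{{_n}\mathbb P^0(T)}{{_n}T(K)}\Bigr)^D$ via local duality, and passes to the limit (Mittag--Leffler holds since the $\Sha^1(K,{_n}\tilde T)$ are finite). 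For~(\ref{sha}) it simply takes the limit of the finite-level dualities $\Sha^1(K,{_n}T)\times\Sha^1(K,{_n}\tilde T)\to\Q/\Z$. That is all; there is no five lemma here. The five lemma is used only \emph{afterwards}, in Th\'eor\`eme~\ref{thtors}, to deduce the perfectness of~(\ref{acc}) from this lemma.

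Your plan runs this backwards: you want to establish first that the middle arrow ${_n}A(\emptyset,T)\to H^1(K,{_n}\tilde T)^D$ induced by~(\ref{acc}) is an isomorphism, and then recover the left-hand pairing by a diagram chase. But the justification you give for the middle bijectivity (``equivalent to the exactness of the middle segment of the Poitou--Tate nine-term sequence for $_nT$'') is not sufficient. That exact sequence only tells you that $H^1(K,{_n}\tilde T)^D$ sits in the \emph{same} short exact sequence as ${_n}A(\emptyset,T)$; it does not identify the two extensions, nor does it say that the specific map coming from~(\ref{acc}) realises such an identification. To close that gap you would have to check the compatibility you yourself flag at the end---namely that the map built from $\varprojlim_U H^1_c(U,{_n}\mathcal T)$ restricts on the sub and the quotient to the standard Poitou--Tate isomorphisms. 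Once you do that, you have in fact already proved both outer isomorphisms directly, which is exactly the paper's argument; the detour through the middle arrow and the five lemma buys nothing for this lemma and only anticipates Th\'eor\`eme~\ref{thtors}.
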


\begin{proof}
Pour chaque $n\geq 1$, on écrit la suite de Poitou-Tate (\cite{diego1}, Th. 2.7 dans le cas $d=-1$):
$$0 \rightarrow \Sha^1(K,{_n}\tilde{T}) \rightarrow H^1(K,{_n}\tilde{T}) \rightarrow \bigoplus_{v\in X^{(1)}} H^1(K_v,{_n}\tilde{T}) \rightarrow {_n}T(K)^D \rightarrow 0.$$
On déduit un isomorphisme :
$$\frac{ H^1(K,{_n}\tilde{T})}{ \Sha^1(K,{_n}\tilde{T})} \cong \text{Ker}\left( \bigoplus_{v\in X^{(1)}} H^1(K_v,{_n}\tilde{T}) \rightarrow {_n}T(K)^D\right) .$$
Via l'identification $\bigoplus_{v\in X^{(1)}}H^1(K_v,{_n}\tilde{T}) \cong \left(\prod_{v\in X^{(1)}}H^0(K_v,{_n}T)\right) ^D \cong ({_n}\mathbb{P}^0(T))^D$, on obtient un isomorphisme :
$$\frac{ H^1(K,{_n}\tilde{T})}{ \Sha^1(K,{_n}\tilde{T})} \cong \left( \frac{{_n}\mathbb{P}^0(T)}{{_n}T(K)}\right) ^D.$$
En passant à la limite projective, on a:
$$\varprojlim_n \frac{ H^1(K,{_n}\tilde{T})}{ \Sha^1(K,{_n}\tilde{T})} \cong \left( \frac{\mathbb{P}^0(T)_{\text{tors}}}{T(K)_{\text{tors}}}\right) ^D.$$
Comme $\Sha^1(K,{_n}\tilde{T})$ est fini pour tout $n$, on a un isomorphisme:
$$\varprojlim_n \frac{ H^1(K,{_n}\tilde{T})}{ \Sha^1(K,{_n}\tilde{T})} \cong \frac{  \varprojlim_n H^1(K,{_n}\tilde{T})}{  \varprojlim_n \Sha^1(K,{_n}\tilde{T})}.$$
On obtient donc un accouplement parfait:
$$\frac{\mathbb{P}^0(T)_{\text{tors}}}{T(K)_{\text{tors}}} \times \frac{\varprojlim_n H^1(K,{_n}\tilde{T})}{\varprojlim_n \Sha^1(K,{_n}\tilde{T})} \rightarrow \mathbb{Q}/\mathbb{Z}.$$

Pour obtenir l'accouplement (\ref{sha}), il suffit de passer à la limite sur les accouplements parfaits de groupes finis:
$$\Sha^1(K,{_n}T) \times \Sha^1(K,{_n}\tilde{T}) \rightarrow\mathbb{Q}/\mathbb{Z}.$$
\end{proof}

\begin{theorem}\label{thtors}
L'accouplement (\ref{acc}) est un accouplement parfait entre une limite inductive de groupes profinis et une limite projective de groupes discrets de torsion.
\end{theorem}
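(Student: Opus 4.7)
The plan is to deduce the theorem by combining Lemma~\ref{ators} with Lemma~\ref{deuxacc} via a topological five-lemma argument, exactly in the spirit of Poitou--Tate style proofs.

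First I would record the two parallel four-term exact sequences that will be matched by the pairing. On the left, Lemma~\ref{ators} provides
$$0 \to \frac{\mathbb{P}^0(T)_{\rm tors}}{T(K)_{\rm tors}} \to A(\emptyset,T)_{\rm tors} \to \varinjlim_n \Sha^1(K,{_n}T) \to 0,$$
and on the right one has the tautological
$$0 \to \varprojlim_n \Sha^1(K,{_n}\tilde T) \to \varprojlim_n H^1(K,{_n}\tilde T) \to \frac{\varprojlim_n H^1(K,{_n}\tilde T)}{\varprojlim_n \Sha^1(K,{_n}\tilde T)} \to 0.$$
The core step is to show that the pairing (\ref{acc}) is compatible with these two filtrations, i.e.\ that the subgroup $\mathbb{P}^0(T)_{\rm tors}/T(K)_{\rm tors}$ annihilates $\varprojlim_n \Sha^1(K,{_n}\tilde T)$, and that the induced pairings on the sub and on the quotient coincide respectively with the pairings (\ref{ev}) and (\ref{sha}) of Lemma~\ref{deuxacc}.

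Establishing these compatibilities is essentially a diagram chase, but it is the technical heart of the proof. The pairing (\ref{acc}) was constructed from the maps $f_v : T(K_v^h) \to H^1_c(U,\mathcal{T})$ and Artin--Verdier--type Poincar\'e duality $\varprojlim_n \varprojlim_U {_n}H^1_c(U,\mathcal{T})^D \cong \varprojlim_n H^1(K,{_n}\tilde T)$, while the pairings in Lemma~\ref{deuxacc} were constructed from the Poitou--Tate sequence for the finite modules $_n T$ and $_n \tilde T$. The comparison reduces to the commutativity, for each $n$, of the diagram relating the Kummer sequence identification $T(K_v)/n \cong H^1(K_v,{_n}T)$ to the local localisation sequence for $_n\mathcal{T}$, which is classical; taking the limit in $n$ and in $U$ then transports the Poitou--Tate Sha-pairing (respectively the $\mathbb P^0 \times H^1/\Sha^1$ pairing) onto the restriction of (\ref{acc}) to the quotient (resp.\ sub) factor. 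This is the main obstacle, since one must be careful with the passage from $K_v$ to $K_v^h$ and with the order of $\varinjlim_n$ and $\varprojlim_U$.

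Granted this compatibility, the topological five-lemma of \cite{dhsza2}, Lemme~2.4 applies: Lemma~\ref{deuxacc} gives that the pairings on the outer columns are perfect pairings of locally compact abelian groups, and the central pairing is therefore perfect as well. It remains to check the claimed topological structure. Each ${_n}A(\emptyset,T)$ fits, by the first part of the proof of Lemma~\ref{ators}, into an extension of the finite group $\Sha^1(K,{_n}T)$ by the profinite group ${_n}\mathbb{P}^0(T)/{_n}T(K)$ (which is profinite since ${_n}\mathbb{P}^0(T) = \prod_v {_n}T(K_v)$ is a product of finite groups and $_nT(K)$ is discrete in it by Theorem~\ref{gentheo}); hence $A(\emptyset,T)_{\rm tors} = \varinjlim_n {_n}A(\emptyset,T)$ is an inductive limit of profinite groups. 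On the other side, $\varprojlim_n H^1(K,{_n}\tilde T)$ is manifestly a projective limit of discrete torsion groups, which concludes the proof.
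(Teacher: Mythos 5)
Your proposal follows essentially the same route as the paper: the paper's proof is exactly the commutative diagram matching the exact sequence of Lemme~\ref{ators} against the dual of the tautological sequence for $\varprojlim_n \Sha^1(K,{_n}\tilde T) \subset \varprojlim_n H^1(K,{_n}\tilde T)$, with the horizontal isomorphisms supplied by Lemme~\ref{deuxacc}, concluded by the five lemma. The compatibility of the pairing (\ref{acc}) with the two filtrations, which you rightly single out as the technical heart, is precisely the commutativity of that diagram (asserted without detail in the paper), and your remarks on the topological structure of the two sides are consistent with the paper's setup.
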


\begin{proof}
On a un diagramme commutatif à colonnes exactes:\\
\centerline{\xymatrix{
0 \ar[d] & 0\ar[d] \\
\frac{\mathbb{P}^0(T)_{\text{tors}}}{T(K)_{\text{tors}}} \ar[d]\ar[r]^-{\cong}& \left( \frac{\varprojlim_n H^1(K,{_n}\tilde{T})}{\varprojlim_n \Sha^1(K,{_n}\tilde{T})}\right) ^D\ar[d] \\
 A(\emptyset,T)_{\text{tors}}\ar[d]\ar[r] &\left( \varprojlim_n H^1(K,{_n}\tilde{T})\right)^D\ar[d]\\
 \varinjlim_n \Sha^1(K,{_n}T)\ar[d]\ar[r]^-{\cong} & \left( \varprojlim_n \Sha^1(K,{_n}\tilde{T})\right) ^D\ar[d]\\
 0 & 0
}}
Le lemme des cinq permet donc de conclure.
\end{proof}

 \begin{corollary} \label{corcc}
 On a un accouplement naturel parfait entre une limite inductive de groupes profinis et une limite projective de groupes discrets de torsion:
 $$A(\emptyset,T)_{{\rm div, tors}} \times \varprojlim_n {_n}H^2(K,\hat{T}) \rightarrow \mathbb{Q}/\mathbb{Z}.$$
 \end{corollary}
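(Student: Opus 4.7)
My plan is to deduce this from Theorem~\ref{thtors} by relating $\varprojlim_n {_n}H^2(K,\hat T)$ to $\varprojlim_n H^1(K,{_n}\tilde T)$ via the Kummer sequence for $\hat T$, and then applying Pontryagin duality.

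First I would note that since $k$ is algebraically closed of characteristic zero, the cyclotomic character of $K$ is trivial, so ${_n}\tilde T \cong \hat T/n$ as Galois modules. The Kummer sequence $0 \to \hat T \xrightarrow{n} \hat T \to \hat T/n \to 0$ then yields
$$0 \to H^1(K,\hat T)/n \to H^1(K,{_n}\tilde T) \to {_n}H^2(K,\hat T) \to 0,$$
and since $H^1(K,\hat T)$ is finite by Theorem~\ref{kalgclos}(i), a Mittag--Leffler argument gives, on passing to the inverse limit,
$$0 \to H^1(K,\hat T) \to \varprojlim_n H^1(K,{_n}\tilde T) \to \varprojlim_n {_n}H^2(K,\hat T) \to 0. \quad (\star)$$

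Using Theorem~\ref{thtors} to identify $\bigl(\varprojlim_n H^1(K,{_n}\tilde T)\bigr)^D$ with $A(\emptyset,T)_{\rm tors}$ and applying Pontryagin duality to $(\star)$, I obtain
$$0 \to \bigl(\varprojlim_n {_n}H^2(K,\hat T)\bigr)^D \to A(\emptyset,T)_{\rm tors} \to H^1(K,\hat T)^D \to 0.$$
To identify the kernel with $A(\emptyset,T)_{\rm div,tors}$, the key observation is that $H^2(K,\hat T)$ is divisible: since $\mathrm{cd}(K) = 1$ forces $H^2(K,\hat T/n) = 0$, multiplication by $n$ is surjective on $H^2(K,\hat T)$. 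Its Tate module $\varprojlim_n {_n}H^2(K,\hat T)$ is thus torsion-free, so its Pontryagin dual is a divisible subgroup of $A(\emptyset,T)_{\rm tors}$. As the quotient $H^1(K,\hat T)^D \cong \overline{A(\emptyset,T)}_{\rm tors}$ is finite, this divisible subgroup has finite index and hence coincides with the maximal divisible subgroup $A(\emptyset,T)_{\rm div,tors}$.

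The main obstacle I anticipate is the topological bookkeeping: one must verify that $(\star)$ is strictly exact as a sequence of topological groups (so that Pontryagin duality applies, cf.\ the use of \cite{hewitt}, Th.~5.29 and \cite{dhsza2}, Lemme~2.4 in the proof of Theorem~\ref{poitoukalg}), and check that the resulting dual pairing coincides with the natural one induced by~(\ref{acc}) via the quotient $\varprojlim_n H^1(K,{_n}\tilde T) \twoheadrightarrow \varprojlim_n {_n}H^2(K,\hat T)$ and the inclusion $A(\emptyset,T)_{\rm div,tors} \hookrightarrow A(\emptyset,T)_{\rm tors}$; this compatibility should follow from the functoriality of all constructions involved.
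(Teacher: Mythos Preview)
Your proof is correct and essentially the same as the paper's: both obtain the exact sequence $(\star)$ and conclude via Theorem~\ref{thtors} using that $\varprojlim_n {_n}H^2(K,\hat T)$ is torsion-free and $H^1(K,\hat T)$ is finite. The only cosmetic difference is that the paper derives $(\star)$ from the multiplication-by-$n$ sequence on $\tilde T$ (together with the identifications $H^0(K,\tilde T)/n \cong H^1(K,\hat T)/n$ and $H^1(K,\tilde T) \cong H^2(K,\hat T)$), whereas you use ${_n}\tilde T \cong \hat T/n$ and the Kummer sequence for $\hat T$ directly---these two routes produce the same exact sequence.
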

 
 \begin{proof}
 On a une suite exacte:
 $$ 0 \rightarrow  \varprojlim_n H^0(K,\tilde{T})/n \rightarrow \varprojlim_n H^1(K,{_n}\tilde{T}) \rightarrow \varprojlim_n {_n}H^1(K,\tilde{T}) \rightarrow 0.$$
 Or $H^0(K,\tilde{T})/n$ s'identifie à $H^1(K,\hat{T})/n$ et $H^1(K,\hat{T})$ est d'exposant fini. On obtient donc une suite exacte:
  $$ 0 \rightarrow  H^1(K,\hat{T}) \rightarrow \varprojlim_n H^1(K,{_n}\tilde{T}) \rightarrow \varprojlim_n {_n}H^1(K,\tilde{T}) \rightarrow 0,$$
  où $\varprojlim_n {_n}H^1(K,\tilde{T})$ est sans torsion. On déduit alors le résultat du théorème \ref{thtors} et de l'isomorphisme $H^1(K,\tilde{T}) \cong H^2(K,\hat{T})$.
  
 \end{proof}
 
 \begin{remarque}
 Dans le cas $K=\mathbb{G}_m$, on obtient un accouplement parfait:
  $$A(\emptyset,\mathbb{G}_m)_{\text{div, tors}} \times \text{Hom}_c(\text{Gal}(\overline{K}/K),\hat{\mathbb{Z}}) \rightarrow \mathbb{Q}/\mathbb{Z}.$$
 \end{remarque}
 
 \begin{conclusion}
 Le groupe $A(\emptyset, T)$ est somme directe d'un groupe uniquement divisible $D$ et d'un groupe $B(T)$ tel que:
 \begin{gather*}
 B(T)_{\text{div}} \cong \left( \varprojlim_n {_n}H^2(K,\hat{T})\right)^D,\\
 \overline{B(T)}_{\rm tors} \cong H^1(K,\hat{T}) ^D,\;\;\;
 \overline{B(T)}/\overline{B(T)}_{\rm tors} \cong \mathbb{Z}^{\text{rg}(H^0(k,\hat{T}))}.
 \end{gather*}
 \end{conclusion}

\section{Cas $k=\mathbb{C}((t))$}
    
Dans cette section, on suppose que $K$ est le corps des fonctions d'une 
courbe projective lisse $X$ sur le corps $k=\mathbb{C}((t))$. La th\'eorie 
du corps de classes pour $K$ (cf. \cite{adt}, section I, appendice A) 
pr\'esente des similitudes avec celle 
d'un corps de fonctions sur un corps fini, mais \'egalement des 
diff\'erences (en particulier le th\'eor\`eme de Brauer-Hasse-Noether et 
le th\'eor\`eme de \v Cebotarev ne valent en g\'en\'eral pas). Ceci va 
se retrouver dans la description de l'espace ad\'elique d'un $K$-tore.
    
\begin{proposition}\label{c((t))}
Soit $C_K:=A(\emptyset,\G)=\left( \prod_{v\in C^{(1)}}' 
K_v^* \right)/K^*$. 
Alors il existe un sous-groupe divisible $U^0$ de $C_K$ tel que 
$(C_K/U^0)_{\rm tors}$ soit de torsion de type cofini.
\end{proposition}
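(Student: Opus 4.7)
The plan is to exhibit $U^0$ as the image in $C_K$ of a natural divisible subgroup of $\prod'_v K_v^*$ built from residue-field data, and then to place $C_K/U^0$ into an explicit short exact sequence whose two outer terms manifestly have finite $n$-torsion for every $n$.

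\textbf{Construction of $U^0$.} For each $v\in X^{(1)}$, the residue field $k(v)$ is a finite (necessarily totally ramified, since $k=\mathbb{C}((t))$ has algebraically closed residue field) extension of $k$, so $k(v)\cong\mathbb{C}((s_v))$ for a uniformizer $s_v$. Since $\calo_v$ has equal characteristic zero, $1+\mathfrak{m}_v$ is uniquely divisible, and $\mathbb{C}[[s_v]]^*=\mathbb{C}^*\times(1+s_v\mathbb{C}[[s_v]])$ is divisible. Let $\calo_v^\flat:=\{u\in\calo_v^*\mid w_v(\bar u)=0\}$, where $w_v$ denotes the valuation on $k(v)$. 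Then $\calo_v^\flat$ fits into $1\to 1+\mathfrak{m}_v\to\calo_v^\flat\to\mathbb{C}[[s_v]]^*\to 1$ and is therefore divisible. Set $V:=\prod_v\calo_v^\flat\subset\prod'_v K_v^*$ and $U^0:=(V\cdot K^*)/K^*\subset C_K$; as a quotient of the divisible group $V$, the subgroup $U^0$ is divisible.

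\textbf{Reduction to a short exact sequence.} The filtration $\calo_v^\flat\subset\calo_v^*\subset K_v^*$ yields $K_v^*/\calo_v^\flat\cong\mathbb{Z}^2$, one factor being the $K_v$-valuation $\nu_v$ and the other the residual valuation $w_v$. Passing to the restricted product gives $\prod'_v K_v^*/V\cong\bigoplus_v\mathbb{Z}\oplus\prod_v\mathbb{Z}$ (the direct sum records the $\nu_v$'s, which vanish almost everywhere). Projection onto the $\bigoplus_v\mathbb{Z}$ factor composed with the quotient by principal divisors gives a surjection $C_K/U^0\twoheadrightarrow\pic(X)$, whose kernel is identified by a short diagram chase (using $K^*\cap\prod_v\calo_v^*=k^*$, which follows from the projectivity of $X$) with $\prod_v\mathbb{Z}$ modulo the image of $k^*$ in the second factor. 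For $h=t^m h'\in k^*$ with $h'\in\mathbb{C}[[t]]^*$, total ramification of $k(v)/k$ gives $w_v(\bar h)=m\cdot e_v$ with $e_v:=[k(v):k]$, so the image of $k^*$ equals $\mathbb{Z}\cdot(e_v)_v$, yielding
$$0\to\prod_v\mathbb{Z}\big/\mathbb{Z}\cdot(e_v)_v\to C_K/U^0\to\pic(X)\to 0.$$

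\textbf{Cofinite-type torsion of the outer terms.} Set $d:=\gcd_v(e_v)$, a positive integer (bounded by any particular $e_{v_0}$). Writing $e_v=d\cdot e_v'$ with $\gcd_v(e_v')=1$, one checks that $\prod_v\mathbb{Z}/\mathbb{Z}(e_v')_v$ is torsion-free, whence $\prod_v\mathbb{Z}/\mathbb{Z}(e_v)_v$ is an extension of a torsion-free group by $\mathbb{Z}/d\mathbb{Z}$ and has finite $n$-torsion for every $n$. For $\pic(X)$, the degree map exhibits $\pic(X)/\pic^0(X)$ as a subgroup of $\mathbb{Z}$, so it suffices to show ${}_nJ(k)$ is finite for $J=\pic^0(X)$. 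Via a N\'eron model $\mathcal{J}/\mathbb{C}[[t]]$ of $J$, one obtains $J(k)\twoheadrightarrow\mathcal{J}_s(\mathbb{C})$ with kernel the formal group $\hat J(\mathfrak{m})$, which in characteristic zero is a $\mathbb{Q}$-vector space and hence torsion-free; thus ${}_nJ(k)\cong{}_n\mathcal{J}_s(\mathbb{C})$, and the latter is finite since $\mathcal{J}_s$ is a commutative algebraic group of finite type over $\mathbb{C}$. Applying ${}_n(-)$ to the above short exact sequence and using left-exactness gives ${}_n(C_K/U^0)$ finite for every $n$, so $(C_K/U^0)_{\mathrm{tors}}$ is of cofinite type. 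The main delicate step is the identification of the kernel, resting on the total ramification computation $w_v(\bar t)=e_v$ and on the equality $K^*\cap\prod_v\calo_v^*=k^*$.
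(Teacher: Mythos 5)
Your proof is correct and takes essentially the same route as the paper: you choose the same divisible subgroup $U^0$ (units whose reduction in $k(v)$ has residual valuation zero) and arrive at the same exact sequence $0\to\prod_v\mathbb{Z}/\mathbb{Z}\cdot(e_v)_v\to C_K/U^0\to\mathrm{Pic}(X)\to 0$, then conclude from the cofinite-type torsion of the two outer terms. The only difference is that you spell out details the paper leaves implicit — the divisibility of $U^0$, the $\gcd$ computation showing the left-hand term has finite $n$-torsion, and a N\'eron-model argument for the finiteness of ${}_nJ(k)$ where the paper simply bounds $J(k)_{\rm tors}$ by $J(\bar k)_{\rm tors}\cong(\mathbb{Q}/\mathbb{Z})^{2g}$.
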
 

\begin{proof}
Pour chaque $v\in X^{(1)}$, soit $ U_v^0$ le sous-groupe de $\mathcal{O}_v^*$ constitué des éléments dont la réduction dans le corps résiduel $k(v)$ est de valuation nulle. Soit $U^0$ l'image de $\prod_{v\in X^{(1)}} U_v^0$ dans $C_K$. Posons $C_K^0=C_K/U^0$. Le groupe $U^0$ est divisible. Montrons que $(C_K/U^0)_{\text{tors}}$ est de torsion de type cofini.\\

Soit $k(v)$ le corps r\'esiduel de $v$ et ${\rm val}_{k(v)}$ sa valuation
(ici $k(v)$ est vu comme un corps complet pour une valuation discr\`ete,
via le fait que c'est une extension finie de $\CC((t))$).

Le groupe $C_K^0$ s'insère dans une suite exacte, qu'on obtient comme la 
suite (\ref{gmsuite})~:
\begin{equation}\label{c^0}
0 \rightarrow \frac{\prod_{v\in X^{(1)}} \mathbb{Z}}{\mathbb{Z}.(e_v)}
\rightarrow C_K^0 \rightarrow \text{Pic}\, X \rightarrow 0,
\end{equation}
o\`u l'\'el\'ement $(e_v) \in \bigoplus_{v \in X^{(1)}} \Z \subset 
\prod_{v \in X^{(1)}} \Z$ est obtenu 
en posant $e_v={\rm val}_{k(v)}(t)$.
 
En passant aux sous-groupes de torsion, on obtient une suite exacte:
\begin{equation}\label{c^00}
0 \to F \rightarrow (C_K^0)_{\rm tors} \rightarrow (\text{Pic}\, X)_{\rm tors}.
\end{equation}
dans laquelle le groupe $F$ est fini. 
De plus, $ (\pic X)_{\rm tors}=J(k)_{\rm tors}$, 
o\`u $J$ est la jacobienne de $X$. Mais $J(\kbar)_{\rm tors}$ est d\'ej\`a 
de type cofini car pour $n >0$, le sous-groupe de $n$-torsion d'une 
vari\'et\'e ab\'elienne de dimension $g$ sur un corps alg\'ebriquement 
clos de caract\'eristique z\'ero est isomorphe \`a $(\Z/n\Z)^{2g}$. 
Ainsi, $(C_K^0)_{\rm tors}$ est aussi de type cofini.

\end{proof}

\begin{proposition} \label{div2}
Soit $T$ un $K$-tore. Alors $A(\emptyset,T)_{\infty \text{-div}} = A(\emptyset,T)_{ \text{div}}$.
\end{proposition}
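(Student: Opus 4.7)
The plan is to reduce to the case of a quasi-trivial $K$-torus, where the result follows from Proposition~\ref{c((t))}, and then to propagate through Ono's lemma using the abelian-group lemmas of Section~2.

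First I handle the quasi-trivial case. For $R = \prod_i R_{E_i/K}\mathbb{G}_m$ with $E_i/K$ finite, Shapiro gives $A(\emptyset, R) \cong \prod_i A(\emptyset, \mathbb{G}_{m, E_i})$. Each $E_i$ is the function field of a smooth projective curve over a finite extension of $\mathbb{C}((t))$, itself a complete discrete valuation field of characteristic zero with algebraically closed residue field; the proof of Proposition~\ref{c((t))} applies verbatim and provides a divisible subgroup $U_i^0 \subset A(\emptyset, \mathbb{G}_{m, E_i})$ such that $(A(\emptyset, \mathbb{G}_{m, E_i})/U_i^0)_{\text{tors}}$ is of cofinite type. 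Lemma~\ref{tcof} then gives $\infty$-div $=$ div in the quotient, and since an extension of a divisible group by a divisible group is divisible, the equality lifts to $A(\emptyset, \mathbb{G}_{m, E_i})$ itself and hence to the finite product $A(\emptyset, R)$.

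For general $T$, Ono's lemma supplies an exact sequence $0 \to F \to R_0 \to T^m \times R_1 \to 0$ with $F$ finite and $R_0, R_1$ quasi-trivial. Using $H^1(K, R_i) = H^1(K_v, R_i) = 0$ (Hilbert~90 via Shapiro), the diagram chase from the proof of Theorem~\ref{rangtheo} produces an exact sequence
$$0 \to N \to A(\emptyset, R_0) \to A(\emptyset, T)^m \times A(\emptyset, R_1) \to M \to 0$$
in which $N$ and $M$ have finite exponent dividing $|F|$, being subquotients of sums or products of $F$-cohomology groups. Split this into $0 \to N \to A(\emptyset, R_0) \to B \to 0$ and $0 \to B \to A(\emptyset, T)^m \times A(\emptyset, R_1) \to M \to 0$. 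Lemma~\ref{infdiv0} applied to the second sequence yields $B_{\infty-\div} = (A(\emptyset, T)^m \times A(\emptyset, R_1))_{\infty-\div}$; the maximal divisible subgroups likewise coincide, as any divisible subgroup of $A(\emptyset, T)^m \times A(\emptyset, R_1)$ maps to zero in the finite-exponent $M$ and therefore lies in $B$. Combined with the already-established equality for $A(\emptyset, R_1)$, the proof reduces to showing $B_{\infty-\div} = B_{\div}$.

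This last reduction is the main obstacle. Lemma~\ref{fini} would close the argument if $N$ were finite, but $N$ is only of finite exponent, and a direct chase yields merely $|F| \cdot B_{\infty-\div} \subset B_{\div}$. To bridge the gap I plan to use Proposition~\ref{c((t))} in its full strength, strengthening the conclusion for $A(\emptyset, R_0)$ to the structural statement that $\overline{A(\emptyset, R_0)}_{\text{tors}}$ is of cofinite type (which follows because $\overline{A(\emptyset, R_0)}$ coincides with $\overline{A(\emptyset, R_0)/U^0}$ when $U^0$ is divisible, and the torsion of the latter is cofinite type). Transferring this property through the four-term sequence—using Poitou--Tate finiteness for the finite Galois module $F$ over $K$ (the $d=0$ case of \cite{diego1}, Th.~2.7) to control the torsion contributions of $N$ and $M$—should give $\overline{A(\emptyset, T)}_{\text{tors}}$ of cofinite type. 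Lemma~\ref{tcof} applied to $\overline{A(\emptyset, T)}$, whose maximal divisible subgroup is trivial by definition, then yields $\overline{A(\emptyset, T)}_{\infty-\div} = 0$, which is equivalent to the desired equality $A(\emptyset, T)_{\infty-\div} = A(\emptyset, T)_{\div}$.
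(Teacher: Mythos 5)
Your architecture coincides with the paper's up to the very last step: Shapiro plus Proposition~\ref{c((t))} for the quasi-trivial case, then Ono's lemma, the four-term sequence and Lemma~\ref{infdiv0} to reduce to the group $B=A(\emptyset,R_0)/N$. The genuine gap is in the bridge you propose for that last step. You want to show that $(\overline{A(\emptyset,T)})_{\text{tors}}$ is of cofinite type and then apply Lemma~\ref{tcof} to $\overline{A(\emptyset,T)}$. Two things go wrong. First, the tool you invoke is not available here: over $k=\mathbb{C}((t))$ the Poitou--Tate sequence for a finite module $F$ does \emph{not} make $H^1(K,F)$, $\prod'_{v}H^1(K_v,F)$ or the cokernel $M$ finite --- these groups are of finite exponent but infinite in general (this is exactly the contrast with the algebraically closed case of Section~4, where $M_U\cong F'(K)^D$ is finite). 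So nothing controls the torsion picked up from $M$: in $0\to\overline{B}\to\overline{P}\to M\to 0$ the $n$-torsion of $\overline{P}$ can a priori meet an infinite subgroup of $M$. Second, the statement you are aiming for is false in general: $(\overline{A(\emptyset,T)})_{\text{tors}}=\overline{A(\emptyset,T)_{\rm tors}}$ is only of finite exponent (Corollaire~\ref{corcct}), being dual to a quotient of $\varprojlim_n H^2(K,\hat{T})/n$, which is infinite for suitable $T$; an infinite group of finite exponent is not of cofinite type, so Lemma~\ref{tcof} simply does not apply to $\overline{A(\emptyset,T)}$. Your diagnosis that Lemma~\ref{fini} is unusable because $N$ is only of finite exponent is correct, but the proposed repair does not close the argument.

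The paper's resolution is different and avoids any finiteness: one transfers not the conclusion $\infty$-div $=$ div but the structural statement of Proposition~\ref{c((t))} itself. There is a divisible subgroup $D\subset A(\emptyset,R_0)$ with $(A(\emptyset,R_0)/D)_{\rm tors}$ of cofinite type; since $N$ is torsion, $B/(D/(D\cap N))=A(\emptyset,R_0)/DN$ is the quotient of $A(\emptyset,R_0)/D$ by a torsion subgroup, so its torsion subgroup is a quotient of $(A(\emptyset,R_0)/D)_{\rm tors}$ and remains of cofinite type. Lemma~\ref{tcof} then applies to $B/(D/(D\cap N))$, and since $D/(D\cap N)$ is divisible, hence a direct summand of $B$, one gets $B_{\infty-\div}=B_{\div}$ directly. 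This uses only that $N$ has finite exponent and never needs $M$, $H^1(K,F)$ or $\overline{A(\emptyset,T)}_{\rm tors}$ to be finite or of cofinite type. Your quasi-trivial case and all the reductions preceding the bridge are fine and can be kept verbatim.
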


\begin{proof}
D'après le lemme d'Ono, il existe un entier $m>0$ et une suite exacte:
    $$0\rightarrow F \rightarrow R_0 \rightarrow T^m  \times R_1 \rightarrow 0$$
    tels que $F$ est un groupe abélien fini étale et $R_0$ et $R_1$ sont des tores quasi-triviaux. On obtient alors un diagramme commutatif à lignes exactes:\\
    \small
  \centerline{  \xymatrix{
  0 \ar[r] & F(K) \ar[r] \ar[d] & R_0(K) \ar[r]\ar[d] & T(K)^m \times R_1(K) \ar[r]\ar[d] & H^1(K,F) \ar[r] \ar[d]& 0\\
  0 \ar[r] & \prod_{v\in X^{(1)}} F(K_v) \ar[r] & \prod_{v\in X^{(1)}} ' R_0(K_v) \ar[r] & \prod_{v\in X^{(1)}} ' (T(K_v)^m \times R_1(K_v)) \ar[r] & \prod'_{v\in X^{(1)}} H^1(K_v,F) \ar[r] & 0
  }}
  \normalsize
  Une chasse au diagramme permet de montrer qu'on a alors une suite exacte:
$$  0 \rightarrow N \rightarrow A(\emptyset,R_0) \rightarrow A(\emptyset,T)^m\times A(\emptyset,R_1) \rightarrow \left( \prod'_{v\in X^{(1)}} H^1(K_v,F) \right) /H^1(K,F) \rightarrow 0.$$
  avec $N$ un groupe abélien d'exposant fini. D'après le lemme \ref{infdiv0}, il suffit de démontrer que $(A(\emptyset,R_0)/N)_{\infty \text{-div}} = (A(\emptyset,R_0)/N)_{ \text{div}}$.\\
  
   D'après la proposition \ref{c((t))}, il existe un sous-groupe divisible $D$ de $A(\emptyset,R_0)$ tel que $(A(\emptyset,R_0)/D)_{\text{tors}}$ est de torsion de type cofini. Comme $N$ est de torsion, on déduit que $((A(\emptyset,R_0)/N)/(D/(D\cap N)))_{\text{tors}}$ est aussi de torsion de type cofini. Comme $D/(D\cap N)$ est divisible, en appliquant le lemme \ref{tcof}, on en déduit que $(A(\emptyset,R_0)/N)_{\infty \text{-div}} = (A(\emptyset,R_0)/N)_{ \text{div}}$, ce qui achève la preuve.
\end{proof}

Soit $T$ un $K$-tore. 
Rappelons que pour tout $v \in X^{(1)}$, on a une dualit\'e locale 
entre $H^0(K_v,T)_{\wedge}$ et $H^2(K_v,\hat T)$ (\cite{dhct}, Prop. 3.4), 
d'o\`u l\`a encore une fl\`eche de r\'eciprocit\'e (induite par 
un accouplement d\'efini de mani\`ere similaire \`a (\ref{reciproc}))
$r : \mathbb{P}^0(T) \to H^2(K,\hat T)^D$. Le th\'eor\`eme suivant 
dit que le noyau de $r$ est divisible modulo $T(K)$ et d\'ecrit l'adh\'erence de 
son image. 

\begin{theorem} \label{cttheo}
La fl\`eche $r$ induit un morphisme 
injectif $\overline{A(\emptyset,T)} \to H^2(K,\hat T)^D$, dont 
l'adh\'erence de l'image $I_{\rm adh}$ s'ins\`ere dans une suite exacte 
$$0 \to I_{\rm adh}  \to H^2(K,\hat T)^D \to \Sha^2(K,\hat T)^D \to 0.$$
\end{theorem}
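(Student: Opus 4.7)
The plan is to follow the same template used for the algebraically closed case (Theorem \ref{kalgclos}), replacing the module $\tilde{T}$ (suited to ${\rm cd}(K)=1$) by $\hat{T}$ (suited to the present case ${\rm cd}(K)=2$), and invoking the local duality of \cite{dhct}, Prop. 3.4 in place of the one used in the previous section. The key ingredient will be a Poitou-Tate type exact sequence adapted to the tore $T$.

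More precisely, the first step is to establish a Poitou-Tate exact sequence of the shape
$$H^0(K,T)_{\wedge} \longrightarrow \mathbb{P}^0(T)_{\wedge} \xrightarrow{\beta} H^2(K,\hat T)^D \longrightarrow \Sha^2(K,\hat T)^D \longrightarrow 0,$$
where $\beta$ is the completion of the reciprocity map $r$ (defined via the local pairings $H^0(K_v,T)_{\wedge} \times H^2(K_v,\hat T) \to \mathbb{Q}/\mathbb{Z}$ of \cite{dhct}, Prop.~3.4) and the last arrow is dual to the inclusion $\Sha^2(K,\hat T) \hookrightarrow H^2(K,\hat T)$. This sequence should be obtained by passing to the inverse limit in $n$ of the Poitou-Tate sequences for the finite modules ${}_n T$ (as in \cite{dhct}); the Kummer sequence together with ${\rm cd}(K)=2$ allows one to pass back and forth between the cohomology of $T$ and of ${}_n T$.

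Granting this sequence, the first assertion follows by applying Lemma \ref{completion} b) to the complex $H^0(K,T) \to \mathbb{P}^0(T) \to H^2(K,\hat T)^D$: since $H^2(K,\hat T)^D$ is profinite and the completed complex is exact, the kernel of the induced map $\bar r : A(\emptyset,T) \to H^2(K,\hat T)^D$ is $A(\emptyset,T)_{\infty-\div}$, which coincides with $A(\emptyset,T)_{\div}$ by Proposition \ref{div2}. Hence $\bar r$ factors through an injection $\overline{A(\emptyset,T)} \hookrightarrow H^2(K,\hat T)^D$. For the second assertion, set $P := \ker[H^2(K,\hat T)^D \to \Sha^2(K,\hat T)^D]$; as a closed subgroup of the profinite group $H^2(K,\hat T)^D$, $P$ is itself profinite. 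The Poitou-Tate sequence gives that $\mathbb{P}^0(T)_{\wedge} \to P$ is surjective, so Lemma \ref{completion} a) applied to the composite $\mathbb{P}^0(T) \to P$ yields that the image $I$ of $\bar r$ is dense in $P$. Thus $I_{\rm adh}=P$, and we obtain the short exact sequence.

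The main obstacle is the derivation of the Poitou-Tate sequence for the torus itself from the analogous sequences for its torsion subsheaves ${}_n T$. One must control the behaviour of the various limits, especially the compatibility between $\mathbb{P}^0(T)_{\wedge}$ (defined via the restricted product with respect to local integral structures ${\cal T}(\calo_v)$) and the inverse limit over $n$ of the adelic Poitou-Tate terms for ${}_n T$; this is where Hilbert 90, the hypothesis ${\rm cd}(K)=2$, and the specific nature of $k=\mathbb{C}((t))$ enter the argument.
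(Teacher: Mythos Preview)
Your proposal is correct and follows essentially the same route as the paper. The only difference is in your first step: rather than deriving the Poitou-Tate sequence for $T$ by passing to the limit over the finite modules ${}_n T$, the paper directly invokes the ready-made sequence from \cite{diego1}, Th.~3.20 (with the remark that the hypothesis there on a splitting field $L$ with $\Sha^2(L,\mathbb{G}_m)=0$ is not needed for exactness at the relevant terms); after that, the application of Lemma~\ref{completion} and Proposition~\ref{div2} is exactly what you describe.
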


\begin{proof}
D'apr\`es \cite{diego1}, Th.~3.20, 
on a une suite exacte 
\begin{equation}\label{ptc((t))}
H^0(K,T)_{\wedge} \to \mathbb{P}^0(T)_{\wedge} \to C \to 0, 
\end{equation}
o\`u $C$ est le groupe profini d\'efini par 
$C:=\ker [H^2(K,\hat T)^D \to \Sha^2(K,\hat T)^D]$. On remarquera que l'énoncé de \cite{diego1}, Th.~3.20 exige comme hypothèse l'existence d'une extension finie $L$ de $K$ déployant $T$ telle que $\Sha^2(L,\mathbb{G}_m)=0$. Mais sa preuve montre que cette hypothèse n'est pas nécessaire pour avoir l'exactitude de (\ref{ptc((t))}). Le r\'esultat 
d\'ecoule alors du lemme~\ref{completion} joint \`a la proposition~\ref{div2}.
\end{proof}

Concernant le sous-groupe de torsion de $A(\emptyset,T)$, on peut procéder 
de manière similaire à la section \ref{ctors}. On obtient 
le théorème suivant:

\begin{theorem}\label{c((t))tors}
Il existe un accouplement parfait entre une limite inductive de groupes profinis et une limite projective de groupes discrets de torsion:
\begin{equation}\label{acc2}
 A(\emptyset,T)_{\rm tors}  \times \frac{\varprojlim_n H^2(K,\hat{T}/n)}{{\rm Im}\left( \overline{\Sha^2(K,\hat{T})}\right) } \rightarrow \mathbb{Q}/\mathbb{Z}.
 \end{equation}
\end{theorem}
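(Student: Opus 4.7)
The strategy is to reproduce the construction of Section~\ref{ctors} step by step, replacing the inputs specific to the $\mathrm{cd}(K)=1$ setting by their $\mathrm{cd}(K)=2$ counterparts: Artin-Verdier duality for curves over $\mathbb{C}((t))$ (see \cite{dhct}, section~4) in place of the duality used in the algebraically closed case, and the Poitou-Tate four-term sequence for finite modules over $K$ in the relevant degree (\cite{diego1}, Th.~2.7) in place of the one used in Section~\ref{ctors}.

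First, I would construct the pairing (\ref{acc2}) just as in the discussion preceding Lemma~\ref{ators}: fix a non-empty strict open $U_0 \subset X$ over which $T$ extends to a smooth model $\mathcal{T}$; the local cohomology maps $f_v : T(K_v^h) \to H^1_c(U\setminus\{v\}, \mathcal{T}) \to H^1_c(U, \mathcal{T})$ are still well defined and vanish on $\mathcal{T}(\mathcal{O}_v^h)$ for $v \in U^{(1)}$. Summing, passing to $\varprojlim_U$, quotienting by $T(K)$ and restricting to torsion yields a morphism
$$A(\emptyset, T)_{\rm tors} \longrightarrow \varinjlim_n \varprojlim_U H^1_c(U, {}_n\mathcal{T}),$$
whose target is dual, via Artin-Verdier duality for curves over $\mathbb{C}((t))$, to $\varprojlim_n H^2(K, \hat T/n)$, producing the desired pairing.

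Next, I would prove the cd-$2$ analogue of Lemma~\ref{ators}. Repeating the construction of the map $h_n$ in loc.\ cit.\ gives an exact sequence
$$0 \to \frac{\mathbb{P}^0(T)_{\rm tors}}{T(K)_{\rm tors}} \to A(\emptyset, T)_{\rm tors} \to \varinjlim_n \ker\bigl(T(K)/n \to \textstyle\prod_v T(K_v)/n\bigr) \to 0.$$
A phenomenon absent from Section~\ref{ctors} is that $H^1(K,T)$ and the $H^1(K_v, T)$ no longer vanish; consequently the rightmost term only injects into $\varinjlim_n \Sha^1(K, {}_nT)$, and this mismatch is what accounts for the denominator $\mathrm{Im}(\overline{\Sha^2(K,\hat T)})$ in the statement. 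Local duality at each $v$ (\cite{dhct}, Prop.~3.4), combined with Poitou-Tate for each ${}_nT$ and passage to $\varprojlim_n$, should yield perfect pairings for $\mathbb{P}^0(T)_{\rm tors}/T(K)_{\rm tors}$ and for the rightmost term, analogously to Lemma~\ref{deuxacc}. The five-lemma then assembles these into the global perfect pairing.

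The main obstacle will be the careful identification of $\mathrm{Im}(\overline{\Sha^2(K,\hat T)})$ inside $\varprojlim_n H^2(K, \hat T/n)$, and the verification that this subgroup matches exactly the discrepancy arising on the $A(\emptyset,T)_{\rm tors}$ side when one passes from $\ker(T(K)/n \to \prod_v T(K_v)/n)$ to $\Sha^1(K, {}_nT)$. Topological subtleties (compactness, local compactness, strictness of the morphisms) can be handled exactly as in Section~\ref{ctors}.
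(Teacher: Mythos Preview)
Your proposal is correct and follows essentially the same route as the paper: construct the pairing via compact-support cohomology and Artin--Verdier duality over $\mathbb{C}((t))$, prove the analogue of Lemma~\ref{ators}, establish the two perfect pairings replacing Lemma~\ref{deuxacc}, and conclude by the five-lemma. The paper makes your ``main obstacle'' concrete by identifying the rightmost term of the short exact sequence as $\ker\bigl(\varinjlim_n \Sha^1(K,{}_nT) \to \Sha^1(K,T)\bigr)$, whose dual (via the finite $\Sha$-duality for ${}_nT$ and passage to the limit) is precisely $\varprojlim_n \Sha^2(K,\hat T/n)\big/\mathrm{Im}\bigl(\overline{\Sha^2(K,\hat T)}\bigr)$.
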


Dans cet énoncé, le groupe $\overline{\Sha^2(K,\hat{T})}$ est fini (cf. \cite{dhct}, Th. 7.2).

\begin{proof}
La preuve suit les mêmes lignes que celle du théorème \ref{thtors}. On commence par construire un accouplement naturel:
$$ A(\emptyset,T)_{\rm tors}  \times \varprojlim_n H^2(K,\hat{T}/n) \rightarrow \mathbb{Q}/\mathbb{Z}$$
tout à fait analogue à l'accouplement (\ref{acc}), à condition de remplacer
la dualit\'e de Poincar\'e entre $H^1_c(U,_n {\mathcal T})$ et
$H^1(U,_n \tilde {\mathcal T})$ par la dualit\'e d'Artin-Verdier 
(\cite{dhct}, Proposition~5.1)
entre $H^1_c(U,_n {\mathcal T})$ et $H^2(U, \hat {\mathcal T}/n)$. Il faut ensuite remarquer que, dans la preuve du lemme \ref{ators}, le groupe $\text{Ker} \left( T(K)/n \rightarrow \prod_{v\in X^{(1)}} T(K_v)/n \right)$ ne s'identifie plus à $\Sha^1(K,{_n}T)$, mais à $\text{Ker} \left( \Sha^1(K,{_n}T) \rightarrow {_n}\Sha^1(K,T)\right) $. Ainsi, la suite exacte du lemme \ref{ators} doit être remplacée par une suite exacte:
$$0 \rightarrow \frac{\mathbb{P}^0(T)_{{\rm tors}}}{T(K)_{{\rm tors}}}\rightarrow  A(\emptyset,T)_{\rm tors}\rightarrow \text{Ker} \left(  \varinjlim_n \Sha^1(K,{_n}T) \rightarrow \Sha^1(K,T) \right) \rightarrow 0.$$
Par conséquent, les accouplements du lemme \ref{deuxacc} doivent être remplacés par des accouplements parfaits:
\begin{gather*} 
\frac{\mathbb{P}^0(T)_{{\rm tors}}}{T(K)_{{\rm tors}}} \times \frac{\varprojlim_n H^2(K,\hat{T}/n)}{\varprojlim_n \Sha^2(K,\hat{T}/n)} \rightarrow \mathbb{Q}/\mathbb{Z},\\
\text{Ker} \left(  \varinjlim_n \Sha^1(K,{_n}T) \rightarrow \Sha^1(K,T) \right) \times \frac{\varprojlim_n \Sha^2(K,\hat{T}/n)}{{\rm Im}\left( \overline{\Sha^2(K,\hat{T})}\right) } \rightarrow\mathbb{Q}/\mathbb{Z}.
\end{gather*}
On termine la preuve en utilisant le lemme des cinq comme dans la démonstration du théorème \ref{thtors}.
\end{proof}

\begin{corollary} \label{corcct}
On a un accouplement parfait entre une limite inductive de groupes profinis et une limite projective de groupes discrets de torsion:
$$
 A(\emptyset,T)_{{\rm div, tors}}  \times \varprojlim_n {_n}H^3(K,\hat{T}) \rightarrow \mathbb{Q}/\mathbb{Z},
$$
et le groupe $\overline{A(\emptyset,T)_{\rm tors}}$ est d'exposant fini.
\end{corollary}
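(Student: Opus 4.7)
The approach mirrors the derivation of Corollaire~\ref{corcc} from Théorème~\ref{thtors} in the algebraically closed case: starting from the perfect pairing of Théorème~\ref{c((t))tors}, the idea is to use the long exact sequence in Galois cohomology to separate the ``divisible torsion'' part on the left from the ``torsion cokernel'' on the right.

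Applied to $0 \to \hat T \xrightarrow{n} \hat T \to \hat T/n \to 0$, the long exact sequence yields
$$0 \to H^2(K,\hat T)/n \to H^2(K,\hat T/n) \to {_n}H^3(K,\hat T) \to 0.$$
The inverse system $\{H^2(K,\hat T)/n\}_n$ has surjective transition maps, so Mittag-Leffler holds and passing to $\varprojlim_n$ preserves exactness:
$$0 \to \varprojlim_n H^2(K,\hat T)/n \to \varprojlim_n H^2(K,\hat T/n) \to \varprojlim_n {_n}H^3(K,\hat T) \to 0.$$
The natural arrow $\overline{\Sha^2(K,\hat T)} \to \varprojlim_n H^2(K,\hat T/n)$ factors through $\varprojlim_n H^2(K,\hat T)/n$ (it is obtained by composing $\overline{\Sha^2(K,\hat T)} \hookrightarrow \overline{H^2(K,\hat T)}$ with the canonical map to the completion), whence a short exact sequence
$$0 \to \frac{\varprojlim_n H^2(K,\hat T)/n}{\mathrm{Im}(\overline{\Sha^2(K,\hat T)})} \to \frac{\varprojlim_n H^2(K,\hat T/n)}{\mathrm{Im}(\overline{\Sha^2(K,\hat T)})} \to \varprojlim_n {_n}H^3(K,\hat T) \to 0.$$

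The key technical point is to establish that the leftmost term has finite exponent; this is the main obstacle and should follow from the finiteness of $\overline{\Sha^2(K,\hat T)}$ (\cite{dhct}, Th.~7.2) combined with a careful analysis of the reduced quotient $\overline{H^2(K,\hat T)}$, using the local-global duality underpinning Théorème~\ref{cttheo}. Granting this, one dualizes via the perfect pairing of Théorème~\ref{c((t))tors}. Using Proposition~\ref{div2} to identify $A(\emptyset,T)_{\infty\text{-div}}$ with $A(\emptyset,T)_{\rm div}$, the annihilator in $A(\emptyset,T)_{\rm tors}$ of the finite-exponent kernel is exactly $A(\emptyset,T)_{\rm div,tors}$; by Pontryagin duality one reads off both the desired perfect pairing between $A(\emptyset,T)_{\rm div,tors}$ and $\varprojlim_n {_n}H^3(K,\hat T)$, and the finite-exponent statement for the quotient $\overline{A(\emptyset,T)_{\rm tors}} = A(\emptyset,T)_{\rm tors}/A(\emptyset,T)_{\rm div,tors}$.
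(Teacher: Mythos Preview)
Your overall architecture matches the paper's: write the Kummer exact sequence, pass to the inverse limit (Mittag--Leffler is satisfied), observe that $\mathrm{Im}(\overline{\Sha^2(K,\hat T)})$ lands in $\varprojlim_n H^2(K,\hat T)/n$, and then read off the corollary from Th\'eor\`eme~\ref{c((t))tors}.

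The gap is precisely at the point you flag as the ``key technical point''. Your suggested route---finiteness of $\overline{\Sha^2(K,\hat T)}$ together with the local--global input from Th\'eor\`eme~\ref{cttheo}---does not reach the conclusion. What you need is that the quotients $H^2(K,\hat T)/n$ have exponent bounded independently of $n$; neither the finiteness of $\overline{\Sha^2}$ (a statement about a subquotient of $H^2$) nor the injection $\overline{A(\emptyset,T)}\hookrightarrow H^2(K,\hat T)^D$ controls this. The paper's argument is instead a direct restriction--corestriction: choose a finite extension $L/K$ splitting $T$, so that $\hat T\cong\mathbb{Z}^d$ over $L$. Since $\sqrt{-1}\in L$, Artin--Schreier forces $\mathrm{Gal}(\overline L/L)$ to be torsion-free, whence $H^2(L,\mathbb{Z})$ is divisible and $H^2(L,\hat T)/n=0$ for every $n$. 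Then $\mathrm{cores}\circ\mathrm{res}=[L:K]$ shows that $[L:K]$ annihilates $H^2(K,\hat T)/n$ for all $n$, so $\varprojlim_n H^2(K,\hat T)/n$ has exponent dividing $[L:K]$.

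A minor point: your appeal to Proposition~\ref{div2} is not needed here. Once the left term has finite exponent and you note (as the paper does) that $\varprojlim_n {_n}H^3(K,\hat T)$ is torsion-free, its Pontryagin dual is divisible; this identifies it with the maximal divisible subgroup of $A(\emptyset,T)_{\rm tors}$, which coincides with $A(\emptyset,T)_{\rm div,\,tors}$ purely for formal reasons (the torsion of a divisible group is divisible, and conversely a divisible subgroup of $A(\emptyset,T)_{\rm tors}$ lies in $A(\emptyset,T)_{\rm div}$).
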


\begin{proof}
On a une suite exacte:
$$0 \rightarrow \varprojlim_n H^2(K,\hat{T})/n \rightarrow \varprojlim_n H^2(K,\hat{T}/n) \rightarrow \varprojlim_n {_n}H^3(K,\hat{T}) \rightarrow 0$$
(le passage \`a la limite projective ne pose pas de probl\`emes car les 
fl\`eches de transition dans le terme de gauche sont surjectives).
Si $L$ est une extension finie déployant $T$, le groupe de Galois $\text{Gal}(\overline{L}/L)$ n'a pas de torsion d'apr\`es le th\'eor\`eme d'Artin-Schreier 
car $L$ contient $\sqrt{-1}$. Par conséquent, le groupe $H^2(L,\mathbb{Z})$ est divisible. Un argument de restriction-corestriction implique donc que $\varprojlim_n H^2(K,\hat{T})/n$ est d'exposant fini. De plus, le groupe $\varprojlim_n {_n}H^3(K,\hat{T})$ est sans torsion. Le corollaire découle donc immédiatement du théorème \ref{c((t))tors} et du fait que 
l'image de $H^2(K,\hat T)$
(donc a fortiori aussi celle de $\Sha^2(K,\hat{T})$)
dans $\varprojlim_n H^2(K,\hat{T}/n)$ est contenue dans $\varprojlim_n H^2(K,\hat{T})/n$.
\end{proof}

    \section{Cas $k=\mathbb{Q}_p$}
     
    Supposons que $k$ soit un corps $p$-adique. On va d\'emontrer 
l'existence d'une suite exacte analogue \`a celles des th\'eor\`emes 
\ref{kalgclos} et \ref{cttheo}.
La principale difficult\'e est ici de montrer que le sous-groupe 
$A(\emptyset,T)_{\div}$ co\"{\i}ncide avec $A(\emptyset,T)_{\infty-\div}$,
ce qui s'av\`ere plus ardu que dans les cas trait\'es 
pr\'ec\'edemment. On commence comme d'habitude par le cas 
$T=\G$.

\begin{lemma}\label{eldiv}
Soit $D_K=(C_K)_{\infty-\div}$ le sous-groupe des éléments divisibles de 
$C_K:=A(\emptyset,\G)=\left( \prod_{v\in X^{(1)}}' 
K_v^* \right)/K^*$. 
On a alors une suite exacte:
$$0 \rightarrow D_K \rightarrow C_K \rightarrow (\text{Br}\; K)^D.$$
\end{lemma}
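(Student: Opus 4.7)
The plan is to derive the result from the Poitou--Tate exact sequence for $\G$ in the $p$-adic setting, combined with Lemma~\ref{completion}(b).

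First, I would apply~\cite{diego1}, Théorème~3.20 to $T = \G$. Since $\text{Br}\,K = H^2(K,\G)$ and the potential ``correction term'' $\Sha^1(K,\G)^D$ vanishes by Hilbert~90 (as $H^1(K,\G) = 0$), the theorem should produce an exact sequence
$$K^*_{\wedge} \longrightarrow \mathbb{P}^0(\G)_{\wedge} \longrightarrow (\text{Br}\,K)^D,$$
in which the right-hand map is given by the sum of local reciprocity pairings and factors through $C_K$, defining the morphism $r : C_K \to (\text{Br}\,K)^D$ appearing in the statement.

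Next, the inclusion $D_K \subseteq \ker r$ is immediate: since $\text{Br}\,K$ is torsion discrete, its dual $(\text{Br}\,K)^D$ is profinite, and thus contains no nonzero infinitely divisible element. Therefore every element of $D_K = (C_K)_{\infty-\div}$ is annihilated by $r$.

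For the reverse inclusion $\ker r \subseteq D_K$, I would apply Lemma~\ref{completion}(b) to the complex $K^* \to \mathbb{P}^0(\G) \to (\text{Br}\,K)^D$. The exactness of the associated completed sequence, established in the first step, yields that the kernel of the induced map $C_K = \mathbb{P}^0(\G)/K^* \to (\text{Br}\,K)^D$ is exactly $(C_K)_{\infty-\div} = D_K$, completing the argument.

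The main obstacle lies in the first step: one must verify that~\cite{diego1}, Théorème~3.20 applies in the $p$-adic setting for $T = \G$ without any superfluous hypothesis (such as the $\Sha^2$-assumption that had to be worked around in the proof of Theorem~\ref{cttheo}). As in that earlier application, a careful inspection of the argument in loc.~cit. should confirm that the specific exact sequence we need holds unconditionally in this case.
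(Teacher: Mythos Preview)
Your approach is correct and essentially identical to the paper's: both combine a Poitou--Tate type exact sequence for $\G$ with Lemma~\ref{completion}(b). The only difference is the reference: the paper invokes \cite{dhsza2}, Th.~2.9 (in the case $T=\G$), which is tailored to the $p$-adic setting and directly yields the exact sequence
\[
0 \rightarrow (K^*)_{\wedge} \rightarrow \mathbb{P}^0(\G)_{\wedge} \rightarrow (\text{Br}\; K)^D,
\]
so the obstacle you flag about the applicability of \cite{diego1}, Th.~3.20 (and its $\Sha^2$ hypothesis) simply disappears with the more specific citation.
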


\begin{proof}
Cela découle immédiatement de la suite exacte (\cite{dhsza2}, Th. 2.9. dans 
le cas $T=\G$)

$$0 \rightarrow (K^*)_{\wedge} \rightarrow \left( \prod_{v\in X^{(1)}}' K_v^* \right)_{\wedge} \rightarrow (\text{Br}\; K)^D$$
et du lemme~\ref{completion}, b).
\end{proof}

\begin{theorem}\label{qp}
Pour $v\in X^{(1)}$, soient $\mathfrak{m}_v$ l'idéal maximal de $\mathcal{O}_v$ et $U^1_v$ le groupe $1+\mathfrak{m}_v$. On a alors une suite exacte:
$$0 \rightarrow \prod_{v\in X^{(1)}} U^1_v \rightarrow C_K \rightarrow (\text{Br}\, K)^D,$$
avec $D_K=\prod_{v\in X^{(1)}} U^1_v$ uniquement divisible; de plus 
$D_K$ est le sous-groupe divisible maximal de $C_K$.
\end{theorem}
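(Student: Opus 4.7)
The plan is to prove four claims: (i) $U^1_v$ is uniquely divisible, hence so is $\prod_v U^1_v$; (ii) the natural map $\prod_v U^1_v \to C_K$ is injective; (iii) its image lies in $D_K := \ker(C_K \to (\text{Br}\, K)^D)$; and (iv) the reverse inclusion $D_K \subseteq \prod_v U^1_v$ holds. Maximality of $D_K$ as a divisible subgroup then follows formally, since any divisible subgroup is a fortiori infinitely divisible and thus contained in $D_K = (C_K)_{\infty\text{-div}}$ by lemma \ref{eldiv}.

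For (i), since $k(v)$ has characteristic zero (as a finite extension of the $p$-adic field $k$), Cohen's structure theorem yields $\mathcal{O}_v \cong k(v)[[\pi_v]]$, and the formal logarithm provides a group isomorphism between $U^1_v$ and the additive group $\mathfrak{m}_v = \pi_v k(v)[[\pi_v]]$, which is a $k(v)$-vector space and hence uniquely divisible. For (ii), any $\lambda \in K^* \cap \prod_v U^1_v$ satisfies $\lambda - 1 \in \mathfrak{m}_v$ at every closed point of $X$, so the global rational function $\lambda - 1$ vanishes everywhere; as $X$ is projective this forces $\lambda = 1$. For (iii), given $(u_v) \in \prod_v U^1_v$ and $\alpha \in \text{Br}(K)$ of order $m$, unique divisibility writes $u_v = (u_v')^m$, hence $\alpha|_{K_v} \cup u_v = m\,\alpha|_{K_v} \cup u_v' = 0$ in $\text{Br}(K_v) \cong \mathbb{Q}/\mathbb{Z}$; summing the local invariants, the reciprocity pairing vanishes on $(u_v)$, so by lemma \ref{eldiv} the image lies in $D_K$.

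For the main step (iv), I will apply lemma \ref{completion}(b) to the complex $A := K^* \cdot \prod_v U^1_v \hookrightarrow B := \prod'_v K_v^* \to C := (\text{Br}\, K)^D$. By (ii), $A \cong K^* \oplus \prod_v U^1_v$ as abelian group, and since $\prod_v U^1_v$ is uniquely divisible, $A_{\wedge} = (K^*)_{\wedge}$. The exactness of $(K^*)_{\wedge} \to B_{\wedge} \to C$ is provided by \cite{dhsza2}, Th.~2.9. Lemma \ref{completion}(b) then identifies $\ker(E' \to C)$ with $E'_{\infty\text{-div}}$, where $E' := C_K/\prod_v U^1_v$, reducing the problem to showing that $E'_{\infty\text{-div}} = 0$. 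The filtration
\[
0 \to \textstyle\prod_v k(v)^*/k^* \to E' \to \text{Pic}(X) \to 0,
\]
derived from (\ref{gmsuite}) together with the canonical splitting $\mathcal{O}_v^* \cong k(v)^* \times U^1_v$ (coming from $\mathcal{O}_v \cong k(v)[[\pi_v]]$), reduces this in turn: $\text{Pic}(X)_{\infty\text{-div}} = 0$ because $J(k)$ is a compact $p$-adic analytic group whose structure is $\mathbb{Z}_p^g$ up to finite quotients, and the degree cokernel is torsion-free of finite type; similarly each $k(v)^*$ has trivial infinitely divisible part (decomposing as $\mathbb{Z} \times \mu_{k(v)} \times \mathbb{Z}_p^{[k(v):\mathbb{Q}_p]}$). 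The delicate point, which I expect to be the main obstacle, is ruling out infinitely divisible elements of $E'$ arising from twists between the two factors along the exact sequence: infinitely divisible elements of an extension need not decompose into infinitely divisible pieces, so finishing the argument requires combining the structural description with the $p$-adic structure of the $k(v)^*$ and a careful lifting argument for $n$-th roots through the filtration.
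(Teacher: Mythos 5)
Your parts (i)--(iii) are correct, and your reduction of (iv) via Lemma~\ref{completion}(b) applied to $A=K^*\cdot\prod_v U^1_v$ (using $A_{\wedge}=(K^*)_{\wedge}$ because $\prod_v U^1_v$ is uniquely divisible, together with \cite{dhsza2}, Th.~2.9) is a valid, slightly repackaged version of what the paper does with Lemma~\ref{eldiv}. But the proof stops exactly where the theorem's difficulty begins: you reduce everything to $E'_{\infty\text{-div}}=0$ for $E'=C_K/\prod_v U^1_v$, observe that $E'$ is an extension of $\text{Pic}\,X$ by $B:=\left(\prod_v k(v)^*\right)/k^*$, note that infinitely divisible elements of an extension need not come from infinitely divisible elements of the pieces, and then declare this "the main obstacle" without resolving it. That obstacle \emph{is} the theorem. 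Worse, the statement you do assert along the way --- that "each $k(v)^*$ has trivial infinitely divisible part" settles the contribution of $B$ --- is not an argument: the danger is precisely that the diagonal copy of $k^*$ in the quotient could manufacture infinitely divisible classes, with the global scalar $\lambda_n\in k^*$ witnessing $n$-divisibility allowed to vary with $n$.

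The paper fills this gap in two steps, both of which you would need. First, the extension by $\text{Pic}\,X$ is disposed of by Lemma~\ref{infdiv}: by Mattuck \cite{mattuck}, $\text{Pic}\,X$ is an extension of a finite group by $\mathbb{Z}_p^m\oplus\mathbb{Z}$, which is torsion-free with trivial infinitely divisible part, so infinitely divisible elements of $E'$ all come from the subgroup $B$. (This tool is already available in the paper and you could have cited it; your worry about "twists between the two factors" is exactly what Lemma~\ref{infdiv} is designed to handle.) Second --- and this is the real content --- one proves $B_{\infty\text{-div}}=0$ by a chain of dévissages: reduce via the valuation sequences and Lemma~\ref{infdiv} to $\left(\prod_v\mathcal{R}_v^*\right)/\mathcal{R}^*$ where $\mathcal{R}_v$ is the ring of integers of the $p$-adic field $k(v)$; split this by the Teichm\"uller lift into a principal-units part and a residue part; and in each of these, choose a base point $v_0$ so as to exhibit the group as an extension of a small piece (a finitely generated $\mathbb{Z}_p$-module, resp.\ a finite group) by an honest product $\prod_{v\neq v_0}(\cdot)$ with no nontrivial infinitely divisible elements, to which Lemmas~\ref{infdiv0} and~\ref{infdiv} apply. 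None of this appears in your proposal, so as written it establishes only the (comparatively easy) inclusion $\prod_v U^1_v\subseteq D_K$ and not the equality.
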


\begin{proof}
Nous avons des suites exactes:
\begin{gather*}
0 \rightarrow \frac{\prod_{v\in X^{(1)}}\mathcal{O}_v^*}{k^*} \rightarrow C_K \rightarrow \text{Pic}\, X \rightarrow 0,\\
0 \rightarrow C' \rightarrow \text{Pic} \, X \rightarrow E \rightarrow 0,
\end{gather*}
où $C' \cong \mathbb{Z}_p^m \oplus \Z$  
pour un certain $m \geq 0$ et $E$ est fini~: en effet, si $J$ est la 
jacobienne de $X$, on a $\pic^0 X=J(k)$, qui est de la forme 
$\mathbb{Z}_p^m \oplus E$ avec $E$ fini d'apr\`es \cite{mattuck}.

\smallskip

Soit $k(v)$ le corps r\'esiduel de $v \in X^{(1)}$. 
D'après le lemme \ref{infdiv}, le groupe  $D_K=(C_K)_{\infty-\div}$
coïncide avec le sous-groupe $\mathcal{D}_K$ des éléments divisibles de $\frac{\prod_{v\in X^{(1)}}\mathcal{O}_v^*}{k^*}$. En exploitant l'isomorphisme $\mathcal{O}_v^* \cong U^1_v \times k(v)^*$, on obtient un isomorphisme:
$$\frac{\prod_{v\in X^{(1)}}\mathcal{O}_v^*}{k^*}\cong \prod_{v\in X^{(1)}} U^1_v \times \frac{\prod_{v\in X^{(1)}}k(v)^*}{k^*}.$$
Or le groupe $\prod_{v\in X^{(1)}} U^1_v$ est uniquement divisible 
via \cite{col}, chapitre IV, Prop. 6 (jointe au fait que 
le corps r\'esiduel $k(v)$ de $K_v$ est de 
caract\'eristique z\'ero). 
En utilisant le lemme~\ref{eldiv}, il nous suffit donc pour conclure de
montrer la proposition suivante~:

\begin{proposition}
Posons $B=\frac{\prod_{v\in X^{(1)}}k(v)^*}{k^*}$. Alors 
$B_{\infty-\div}=0$.
\end{proposition}

Notons $\mathcal{R}$ (resp. $\mathcal{R}_v$) l'anneau des entiers 
de $k$ (resp. $k(v)$) et $\mathfrak{n}$ (resp. $\mathfrak{n}_v$) 
l'idéal maximal de $\mathcal{R}$ (resp. $\mathcal{R}_v$). 
On  note aussi $V^1$ (resp. $V^1_v$) le groupe 
$1 + \mathfrak{n}$ (resp. $1+\mathfrak{n}_v$), puis 
$\mathbb{F}$ (resp. $\mathbb{F}_v$) le corps résiduel de $k$ (resp. $k(v)$). 
En choisissant $v_0\in X^{(1)}$ et en notant $e_{v_0}$ l'indice de ramification de l'extension $k(v)/k$, on a des suites exactes:
\begin{gather*}
0 \rightarrow \frac{\prod_{v\in X^{(1)}}\mathcal{R}_v^*}{\mathcal{R}^*} \rightarrow \frac{\prod_{v\in X^{(1)}}k(v)^*}{k^*} \rightarrow \frac{\prod_{v\in X^{(1)}} \mathbb{Z}}{\mathbb{Z}} \rightarrow 0,\\
0 \rightarrow \prod_{v\in X^{(1)}\setminus \{v_0\}} \mathbb{Z} \rightarrow \frac{\prod_{v\in X^{(1)}} \mathbb{Z}}{\mathbb{Z}} \rightarrow \frac{\mathbb{Z}}{e_{v_0}\mathbb{Z}} \rightarrow 0.
\end{gather*}
En appliquant encore le lemme \ref{infdiv}, on voit que le sous-groupe des éléments divisibles de $B=\frac{\prod_{v\in X^{(1)}}k(v)^*}{k^*}$ coïncide avec celui de $\frac{\prod_{v\in X^{(1)}}\mathcal{R}_v^*}{\mathcal{R}^*}$. On a de plus une suite exacte:
\begin{equation} \label{scindsuite}
0 \rightarrow  \frac{\prod_{v\in X^{(1)}}V^1_v}{V_1} \rightarrow \frac{\prod_{v\in X^{(1)}}\mathcal{R}_v^*}{\mathcal{R}^*} \rightarrow \frac{\prod_{v\in X^{(1)}}\mathbb{F}_v^*}{\mathbb{F}^*}\rightarrow 0,
\end{equation}
qui est scindée par le relèvement de Teichmüller. En écrivant les suites exactes:
\begin{gather*}
0 \rightarrow  \prod_{v\in X^{(1)}\setminus \{v_0\}}V^1_v\rightarrow  \frac{\prod_{v\in X^{(1)}}V^1_v}{V_1} \rightarrow \frac{V^1_{v_0}}{V^1} \rightarrow 0, \\
0 \rightarrow \prod_{v\in X^{(1)}\setminus\{v_0\}}\mathbb{F}_v^* \rightarrow \frac{\prod_{v\in X^{(1)}}\mathbb{F}_v^*}{\mathbb{F}^*} \rightarrow \frac{\mathbb{F}_{v_0}^*}{\mathbb{F}^*} \rightarrow 0,
\end{gather*}
et en remarquant que $\prod_{v\in X^{(1)}\setminus \{v_0\}}V^1_v$ et $\prod_{v\in X^{(1)}\setminus\{v_0\}}\mathbb{F}_v^*$ n'ont pas d'éléments infiniments divisibles non triviaux et que $\frac{V^1_{v_0}}{V^1}$ est un $\mathbb{Z}_p$-module de type fini, le lemme \ref{infdiv0} montre que $\frac{\prod_{v\in X^{(1)}}V^1_v}{V_1}$ et $\frac{\prod_{v\in X^{(1)}}\mathbb{F}_v^*}{\mathbb{F}^*}$ n'ont pas d'éléments divisibles non triviaux. Comme la suite
exacte (\ref{scindsuite}) 
est scind\'ee, on en d\'eduit qu'il en va de m\^eme de 
$\frac{\prod_{v\in X^{(1)}}\mathcal{R}_v^*}{\mathcal{R}^*}$, 
et donc que $B_{\infty-\div}=0$ comme on voulait.

\end{proof}

\begin{corollary} \label{qtdiv}
Si $R$ est un $K$-tore quasi-trivial, alors $A(\emptyset,R)_{\infty-\div}=
A(\emptyset,R)_{\div}$, et ce groupe est uniquement divisible.
\end{corollary}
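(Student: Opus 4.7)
The plan is to reduce the statement to Theorem \ref{qp} applied to each intermediate field, using Shapiro's lemma exactly as in the proof of Corollary \ref{qt}. Since a quasi-trivial torus decomposes as $R = \prod_i R_{E_i/K}\G$ for finitely many finite extensions $E_i/K$, and since each $E_i$ is the function field of a smooth projective curve $X_i$ over the $p$-adique $k$, the argument will boil down to studying $C_{E_i}=(\prod'_{w \in X_i^{(1)}} E_{i,w}^*)/E_i^*$ for each $i$, where Theorem \ref{qp} directly applies.

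More precisely, I would first observe that the formation of the functors $(-)_{\infty-\div}$, $(-)_{\div}$, and the property ``uniquely divisible'' all commute with finite direct products of abelian groups. So it suffices to treat the case $R=R_{E/K}\G$ for a single finite extension $E/K$. Next, exactly as in the proof of Corollary \ref{qt}, writing $E=k(Y)$ for $Y$ a smooth projective $k$-curve mapping to $X$, Shapiro's lemma identifies the diagonal inclusion $R(K) \hookrightarrow \prod'_{v \in X^{(1)}} R(K_v)$ with the inclusion $E^* \hookrightarrow \prod'_{w \in Y^{(1)}} E_w^*$, so that
$$A(\emptyset,R) \;\cong\; C_E.$$

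At this point I would apply Theorem \ref{qp} to the function field $E$ (whose hypotheses are met since $E$ is the function field of a smooth projective curve over the $p$-adique $k$). That theorem gives $(C_E)_{\infty-\div} = (C_E)_{\div} = \prod_{w \in Y^{(1)}} U^1_w$, and the latter group is uniquely divisible by the results recalled in its proof (\cite{col}, ch.~IV, Prop.~6, using that the residue field of each $E_w$ has characteristic zero). Transporting this identification back through the isomorphism $A(\emptyset,R)\cong C_E$ yields the desired conclusion.

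I do not expect any genuine obstacle here: the only thing one needs to double-check is that Shapiro's lemma really is an isomorphism of abelian groups (which it is, since it is induced by the canonical identifications $R(K_v)=\prod_{w \mid v} E_w^*$ and the restricted product structures match), and that Theorem \ref{qp} is applied in exactly the context where it was proved, i.e.\ a function field of a smooth projective curve over a $p$-adique field. The corollary is therefore essentially a formal consequence of the $\G$-case combined with Shapiro's lemma.
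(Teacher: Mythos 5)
Your proof is correct and is exactly the argument the paper intends: the corollary is left without proof as an immediate consequence of Théorème~\ref{qp} via the Weil-restriction/Shapiro identification $A(\emptyset,R_{E/K}\G)\cong C_E$ already used in the proof of the Corollaire~\ref{qt}. The only imprecision is that $E_i$ is the function field of a smooth projective curve over a \emph{finite extension} $k_i$ of $k$ rather than over $k$ itself, but since $k_i$ is again $p$-adique, Théorème~\ref{qp} applies verbatim and nothing changes.
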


Soit maintenant $T$ un $K$-tore quelconque. 
D'après le lemme d'Ono, il existe un entier $m>0$ et une suite exacte:
    $$0\rightarrow F \rightarrow R_0 \rightarrow T^m  \times R_1 \rightarrow 0$$
    tels que $F$ est un $K$-groupe abélien fini, tandis que 
$R_0$ et $R_1$ sont des tores quasi-triviaux. On fixe une telle suite jusqu'\`a la fin de cette section.

\begin{proposition}
Le sous-groupe divisible maximal de $A(\emptyset,T)$ est uniquement divisible.
\end{proposition}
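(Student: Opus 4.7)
The plan is to recycle the Ono commutative diagram from the proof of Proposition~\ref{div2} and then invoke Lemma~\ref{ud} together with Corollaire~\ref{qtdiv}. Starting from the exact sequence $0 \to F \to R_0 \to T^m \times R_1 \to 0$ and the diagram
$$
\xymatrix@C=12pt{
0 \ar[r] & F(K) \ar[r]\ar[d] & R_0(K) \ar[r]\ar[d] & T(K)^m\times R_1(K) \ar[r]\ar[d] & H^1(K,F) \ar[r]\ar[d] & 0 \\
0 \ar[r] & \prod_v F(K_v) \ar[r] & \prod'_v R_0(K_v) \ar[r] & \prod'_v (T(K_v)^m\times R_1(K_v)) \ar[r] & \prod'_v H^1(K_v,F) \ar[r] & 0
}
$$
a diagram chase identical to the one carried out in Proposition~\ref{div2} yields a four-term exact sequence
$$
0 \to N \to A(\emptyset,R_0) \to A(\emptyset,T)^m \times A(\emptyset,R_1) \to Q \to 0,
$$
where $N$ is of finite exponent and $Q = \bigl(\prod'_v H^1(K_v,F)\bigr)/H^1(K,F)$.

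Next I would observe that $Q$ is itself of finite exponent: if $e$ is the exponent of the finite groupe $F$, then multiplication by $e$ kills $H^1(K_v,F)$ for every $v$, hence also the restricted product and its quotient. Thus all the hypotheses of Lemma~\ref{ud} are fulfilled with $A:=N$, $B:=A(\emptyset,R_0)$, $C:=A(\emptyset,T)^m \times A(\emptyset,R_1)$, $C':=Q$, since Corollaire~\ref{qtdiv} guarantees that $B_{\div}=A(\emptyset,R_0)_{\div}$ is uniquely divisible. The lemma then yields that $\bigl(A(\emptyset,T)^m \times A(\emptyset,R_1)\bigr)_{\div}$ is uniquely divisible.

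To conclude, I would use that the divisible subgroup of a finite product is the product of divisible subgroups, so the previous step identifies $\bigl(A(\emptyset,T)_{\div}\bigr)^m \times A(\emptyset,R_1)_{\div}$ as uniquely divisible. Being a direct factor of a uniquely divisible group is inherited, so $A(\emptyset,T)_{\div}^m$ is uniquely divisible, and taking one more direct factor gives the claim for $A(\emptyset,T)_{\div}$ itself. The only place where anything non-trivial happens is the verification that $B_{\div}$ is uniquely divisible in the quasi-trivial case, which is precisely the content of Corollaire~\ref{qtdiv} (itself the heart of the argument, relying on Théorème~\ref{qp}); everything here is a formal transfer of that property from $R_0$ to $T$.
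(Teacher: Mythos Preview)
Your proof is correct and follows essentially the same route as the paper: the paper sets up the same Ono diagram, derives the same four-term exact sequence with $N$ and the right-hand term of finite exponent, and then invokes Lemme~\ref{ud} together with Corollaire~\ref{qtdiv} to conclude. You are simply more explicit than the paper in spelling out why $Q$ has finite exponent and in extracting $A(\emptyset,T)_{\div}$ from the product, but the argument is the same.
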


\begin{proof}
    On consid\`ere le diagramme commutatif à lignes exactes,
et dont les trois premi\`eres fl\`eches verticales sont injectives~: \\

\begin{equation} \label{diagqp}
  \centerline{  \xymatrix{
  0 \ar[r] & F(K) \ar[r] \ar[d] & R_0(K) \ar[r]\ar[d] & T(K)^m \times R_1(K) \ar[r]\ar[d] & H^1(K,F) \ar[r] \ar[d]& 0\\
  0 \ar[r] & \prod_{v\in X^{(1)}} F(K_v) \ar[r] & \prod_{v\in X^{(1)}} ' R_0(K_v) \ar[r] & \prod_{v\in X^{(1)}} ' (T(K_v)^m \times R_1(K_v)) \ar[r] & \prod'_{v\in X^{(1)}} H^1(K_v,F) \ar[r] & 0
  }}
\end{equation}

\smallskip

  Une chasse au diagramme permet de montrer qu'on a alors une suite exacte:
\begin{equation} \label{exactm}
  0 \rightarrow N \rightarrow A(\emptyset,R_0) \rightarrow A(\emptyset,T)^m\times A(\emptyset,R_1) \rightarrow \left( \prod'_{v\in X^{(1)}} H^1(K_v,F) \right) /H^1(K,F) \rightarrow 0.
\end{equation}
  avec $N$ un groupe abélien d'exposant fini (tout comme le groupe de 
droite). La proposition découle 
alors du lemme \ref{ud} et du corollaire~\ref{qtdiv}.
\end{proof}

\begin{theorem} \label{qptheo}
Soit $T$ un $K$-tore. Alors 
$A(\emptyset,T)_{\infty-\div}=A(\emptyset,T)_{\div}$. 
Ce groupe est uniquement divisible.
\end{theorem}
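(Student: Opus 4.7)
The uniquely divisible assertion is already given by the Proposition preceding the theorem, so only the equality $A(\emptyset,T)_{\infty-\div} = A(\emptyset,T)_{\div}$ remains to be proved. My plan is to propagate this property from the quasi-trivial case (Corollary \ref{qtdiv}) via the four-term exact sequence (\ref{exactm})
\[
0 \to N \to A(\emptyset,R_0) \to A(\emptyset,T)^m \times A(\emptyset,R_1) \to Q \to 0,
\]
in which $N$ and $Q$ have finite exponent and $R_0, R_1$ are quasi-trivial. Writing $B_0 = A(\emptyset,R_0)$, $B_1 = A(\emptyset,T)^m \times A(\emptyset,R_1)$, and $I = B_0/N$ for the image of $B_0$ in $B_1$, I split (\ref{exactm}) into $0 \to N \to B_0 \to I \to 0$ and $0 \to I \to B_1 \to Q \to 0$.

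Since $Q$ has finite exponent, Lemma \ref{infdiv0} applied to the second sequence gives $(B_1)_{\infty-\div} = I_{\infty-\div}$. Combined with $(A(\emptyset,R_1))_{\infty-\div} = (A(\emptyset,R_1))_{\div}$ from Corollary \ref{qtdiv} and projection onto an $A(\emptyset,T)$-factor, the theorem reduces to showing the inclusion $I_{\infty-\div} \subseteq I_{\div}$, the reverse one being automatic.

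The bulk of the work is thus to transfer $(B_0)_{\infty-\div} = (B_0)_{\div}$, uniquely divisible by Corollary \ref{qtdiv}, across the quotient $\pi\colon B_0 \twoheadrightarrow I = B_0/N$ by the finite-exponent subgroup $N$. Unlike the $\mathbb{C}((t))$ route of Proposition \ref{div2} via Lemma \ref{tcof}, which is unavailable here since $(B_0)_{\rm tors}$ need not be of cofinite type for $k$ a $p$-adic field, I will follow Lemma \ref{fini}'s approach: lift $x \in I_{\infty-\div}$ to $y \in B_0$ and write $y = n y_n + t_n$ with $t_n \in N$ for each $n \geq 1$; since $N$ has finite exponent $e$, one obtains $ey = n(ey_n)$, so $ey \in (B_0)_{\infty-\div} = (B_0)_{\div}$, and unique divisibility allows subtracting off the divisible piece of $y$.

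The main obstacle is the pigeonhole step closing the proof of Lemma \ref{fini}: extracting from the sequence $(t_{n!})$ an element $t \in N$ appearing infinitely often, so that $y - t$ is divisible by arbitrarily large factorials and hence infinitely divisible in $B_0$. Here $N$ is generally infinite but inherits a natural compact profinite topology as the image of the compact group $A(\emptyset,F) = (\prod_v F(K_v))/F(K)$; the plan is to substitute a diagonal compactness argument across a cofinal family of open subgroups $N_i$ of $N$, applying pigeonhole in each finite quotient $N/N_i$ to the sequence $(t_{n!})$ and patching the resulting values $\bar t_i \in N/N_i$ into a single $t \in N$ for which $y - t \in (B_0)_{\infty-\div} = (B_0)_{\div}$. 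Then $x = \pi(y-t) \in \pi((B_0)_{\div}) \subseteq I_{\div}$, finishing the proof.
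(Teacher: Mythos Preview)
Your reduction to showing $I_{\infty-\div}\subseteq I_{\div}$ for $I=A(\emptyset,R_0)/N$ is correct and matches the paper's first step. However, the compactness argument you propose for passing from $B_0=A(\emptyset,R_0)$ to $I=B_0/N$ has a genuine gap.

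First, a minor point: $N$ is not the image of $A(\emptyset,F)=(\prod_v F(K_v))/F(K)$; that image is the subgroup $M$, and the diagram chase gives $N/M\cong\Sha^1(K,F)$. This is not fatal since $\Sha^1(K,F)$ is finite, but it already signals that the ``product topology'' picture is not quite the right one for $N$.

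The serious issue is that the profinite topology you put on $N$ (or on $M$) has nothing to do with divisibility in $B_0$. Your pigeonhole step produces, for each open subgroup $N_i\subset N$, infinitely many $n$ with $t_{n!}-t\in N_i$; this yields only $y-t\in n!\,B_0+N_i$. To conclude $y-t\in mB_0$ for a given $m$ you would need some $N_i\subset mB_0$, i.e.\ that the subgroups $N\cap mB_0$ are cofinal among the $N_i$. There is no reason for this: a basic open in $\prod_v F(K_v)$ is ``trivial on a fixed finite set of places'', and such an element of $M$ need not be an $m$-th power in the id\`ele class group. Nor can you instead take $N_i=N\cap m!\,B_0$ directly, since you have no finiteness of $N/(N\cap m!\,B_0)$ to run pigeonhole.

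The paper's proof takes a different route. It first separates $N$ into $M$ and the \emph{finite} group $\Sha^1(K,F)$: Lemma~\ref{fini} (which really needs finiteness, not just compactness) reduces (DD) for $B_0/N$ to (DD) for $B_0/M$. The key work is then an explicit d\'evissage of $B_0/M$: writing $R_0=\prod_i R_{L_i/K}\mathbb{G}_m$, one decomposes each $L_{i,w}^*$ as $\mathbb{Z}\times U^1_{i,w}\times\mathbb{Z}\times\mathcal{R}_{i,w}^*$ and observes that each $\tilde M_v$ lands inside the $\prod\mathcal{R}_{i,w}^*$ factor (its cokernel in $R_0(K_v)$ being torsion-free). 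After peeling off uniquely divisible pieces and $\mathbb{Z}_p$-module pieces via Lemmas~\ref{infdiv0} and \ref{infdiv}, one is left with a quotient of a product of \emph{finite} groups $\prod_v (\prod_{i,w}(\mathcal{R}_{i,w}^*)_{\rm tors})/\tilde M_v$ by the finite group $\prod_i(\mathcal{R}_i^*)_{\rm tors}$, where Lemma~\ref{fini} finally applies.

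In short, your observation that $ey\in(B_0)_{\div}$ reduces to the $e$-torsion case is pleasant, but to finish you still need structural input about $B_0$ specific to the $p$-adic situation; the abstract compactness of $N$ does not suffice.
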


\begin{proof} Elle va consister en plusieurs r\'eductions 
successives. Pour simplifier les notations, convenons qu'un groupe ab\'elien $A$
{\it v\'erifie la propri\'et\'e (DD)} si $A_{\infty-\div}=A_{\div}$.
Soit $M$ l'image de $\prod_{v\in X^{(1)}} F(K_v)$ 
dans $A(\emptyset,R_0)$.

\smallskip

{\bf \'Etape 1}~: On va montrer qu'il suffit d'avoir la propri\'et\'e 
(DD) pour le groupe $A(\emptyset,R_0)/M$.

\smallskip

On utilise la suite exacte (\ref{exactm}), o\`u
  le groupe de droite est d'exposant fini. D'après le lemme \ref{infdiv0} et 
le corollaire~\ref{qtdiv}, il suffit de démontrer que $A(\emptyset,R_0)/N$ a la propri\'et\'e (DD).\\

  Remarquons maintenant que via le diagramme (\ref{diagqp}), le groupe 
$N$ s'insère dans une suite exacte:
  $$0\rightarrow M \rightarrow N \rightarrow \Sha^1(K,F) \rightarrow 0.$$
  On a alors une suite exacte:
  $$0 \rightarrow \Sha^1(K,F) \rightarrow A(\emptyset,R_0)/M \rightarrow A(\emptyset,R_0)/N \rightarrow 0.$$
  D'après le lemme \ref{fini}, comme le groupe $\Sha^1(K,F)$ est fini, il suffit bien de démontrer que $A(\emptyset,R_0)/M$ a la propri\'et\'e (DD).\\

\smallskip
  
   Écrivons $R_0 = \prod_{i=1}^m R_{L_i/K}(\mathbb{G}_m)$. Pour chaque $i$, 
on se donne une courbe projective lisse géométriquement intègre $X_i$ sur une 
extension finie $k_i$ de $k$, 
telle que $L_i=k_i(X_i)$. L'extension $L_i/K$ induit un morphisme $\pi_i: X_i \rightarrow X$. Pour tout point ferm\'e $w$ de 
$X_i$, on note $\calo_{i,w}$ le compl\'et\'e 
de l'anneau local de $X_i$ en $w$ et $k_i(w)$ son corps r\'esiduel. On note aussi $\mathfrak{m}_{i,w}$ l'idéal maximal de $\calo_{i,w}$ et $U^1_{i,w}$ le groupe $1+\mathfrak{m}_{i,w}$. On note ensuite $\mathcal{R}_i$ (resp. $\mathcal{R}_{i,w}$) l'anneau des entiers de $k_i$ (resp. $k_i(w)$), et $\mathfrak{n}_i$ (resp. $\mathfrak{n}_{i,w}$) l'idéal maximal de $\mathcal{R}_i$ (resp. $\mathcal{R}_{i,w}$). Soient finalement $V^1_i$ (resp. $V^1_{i,w}$) le groupe $1 + \mathfrak{n}_i$ (resp. $1+\mathfrak{n}_{i,w}$), et $\mathbb{F}_i$ (resp. $\mathbb{F}_{i,w}$) le corps résiduel de $k_i$ (resp. $k_i(w)$). 

\smallskip

{\bf \'Etape 2~:} Pour chaque $v\in X^{(1)}$, notons $\tilde{M}_v$ l'image de $F(K_v)$ dans $R_0(K_v)$. Pour chaque $v\in X^{(1)}$, on a un 
isomorphisme (non canonique):
$$R_0(K_v) \cong \prod_{i=1}^m \prod_{w\in X_i^{(1)} \cap \pi_i^{-1}(v)} L_{i,w}^* \cong \prod_{i=1}^m \prod_{w\in X_i^{(1)} \cap \pi_i^{-1}(v)} (\mathbb{Z} \times U^1_{i,w} \times \mathbb{Z} \times \mathcal{R}_{i,w}^*).$$
En particulier, le conoyau de l'injection $j_v:\prod_{i=1}^m \prod_{w\in X_i^{(1)} \cap \pi_i^{-1}(v)} \mathcal{R}_{i,w}^*\hookrightarrow R_0(K_v)$ est sans 
torsion. Le groupe $F(K_v)$ étant d'exposant fini, $\tilde{M}_v$ est contenu dans l'image de $j_v$. Des dévissages parfaitement analogues à ceux du théorème \ref{qp} montrent alors qu'il suffit de prouver que le groupe:
$$ Z:=\frac{ \prod_{v\in X^{(1)}} \frac{\prod_{i=1}^m \prod_{w\in X_i^{(1)} \cap \pi_i^{-1}(v)} \mathcal{R}_{i,w}^*}{\tilde{M}_v} }{\text{Im}\left( \prod_{i=1}^m \mathcal{R}_i^*\right) }  $$
a la propriété (DD).
\smallskip

{\bf \'Etape 3~:} On a une suite exacte:
\begin{gather*} 
0 \rightarrow T \rightarrow Z \rightarrow V \rightarrow 0,\\
\text{avec} \;\;\;\; T:= \frac{ \prod_{v\in X^{(1)}} \frac{\prod_{i=1}^m \prod_{w\in X_i^{(1)} \cap \pi_i^{-1}(v)} (\mathcal{R}_{i,w}^*)_{\rm tors}}{\tilde{M}_v} }{\text{Im}\left( \prod_{i=1}^m (\mathcal{R}_i^*)_{\rm tors}\right) },\;\;\;\; 
V:= \prod_{i=1}^m \frac{\prod_{w\in X_i^{(1)}} (V^1_{i,w}/V^1_{i,w, \rm tors})}{ (V^1_{i}/V^1_{i, \rm tors})}. 
\end{gather*} Comme chaque $V^1_{i,w}$ et chaque $V^1_{i}$ est un $\mathbb{Z}_p$-module de type fini, le lemme \ref{infdiv} montre qu'il suffit de vérifier que le groupe $T$ vérifie la propriété (DD). Or le groupe $\prod_{i=1}^m (\mathcal{R}_i^*)_{\rm tors}$ est fini et le groupe $\prod_{v\in X^{(1)}} \frac{\prod_{i=1}^m \prod_{w\in X_i^{(1)} \cap \pi_i^{-1}(v)} (\mathcal{R}_{i,w}^*)_{\rm tors}}{\tilde{M}_v} $ vérifie la propriété (DD) car c'est un produit de groupes finis. On conclut en appliquant le lemme \ref{fini}.

\end{proof}

Soit $\mathbb{P}^0(T) = \prod_{v\in X^{(1)}}' T(K_v)$ l'espace ad\'elique 
d'un $K$-tore $T$.  
La dualit\'e locale entre
$H^0(K_v,T)$ et $H^2(K_v,T')$ (\cite{dhsza1}, Prop~2.2) induit un accouplement
${\mathbb P}^0(T) \times H^2(K,T') \to \Q/\Z,$
d'o\`u une fl\`eche de r\'eciprocit\'e $r : {\mathbb P}^0(T) \to
H^2(K,T')^D$. Cette fl\`eche a encore un noyau divisible modulo $T(K)$, et ici 
l'adh\'erence de son image est d'indice fini dans $H^2(K,T')^D$~:

\begin{corollary} \label{qpcor}
Soit $T$ un $K$-tore. L'application $r$ induit une fl\`eche injective 
$\overline{A(\emptyset,T)} \to H^2(K,T')^D$. De plus, l'adh\'erence 
$I_{\rm adh}$ de l'image $I$ de $r$ s'ins\`ere dans une suite exacte
$$0 \to I_{\rm adh} \to H^2(K,T')^D \to \Sha^1(T) \to 0.$$
\end{corollary}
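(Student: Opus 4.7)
The plan is to mirror the strategy of Theorem~\ref{cttheo}, with the Poitou--Tate sequence for $\hat T$ used there replaced by the analogous sequence for the dual torus $T'$ available in the $p$-adic setting, and with the divisibility input now provided by the freshly proved Theorem~\ref{qptheo}.

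First I would extract from the Poitou--Tate-type sequence for tori established in \cite{dhsza2} (Th.~2.9 and its variants) the exact sequence
$$H^0(K,T)_{\wedge} \longrightarrow \mathbb{P}^0(T)_{\wedge} \longrightarrow H^2(K,T')^D \longrightarrow \Sha^1(K,T)^D \longrightarrow 0,$$
so that the image of $\mathbb{P}^0(T)_{\wedge}$ in $H^2(K,T')^D$ is the profinite subgroup $C := \ker\bigl[H^2(K,T')^D \to \Sha^1(K,T)^D\bigr]$, and one obtains a surjection $\mathbb{P}^0(T)_{\wedge} \twoheadrightarrow C$. As in the proof of Theorem~\ref{cttheo}, one should check that the ancillary hypotheses of the cited theorem are not in fact used to establish this portion of the sequence.

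Next I would apply Lemma~\ref{completion}~b) to the complex $H^0(K,T) \to \mathbb{P}^0(T) \to C$, whose $\wedge$-completion is the above exact sequence. Identifying the quotient $\mathbb{P}^0(T)/\mathrm{Im}\,H^0(K,T)$ with $A(\emptyset,T)$, the lemma yields
$$\ker\bigl(A(\emptyset,T) \to H^2(K,T')^D\bigr) \;=\; A(\emptyset,T)_{\infty-\div}.$$
Invoking now Theorem~\ref{qptheo}, the right-hand side coincides with $A(\emptyset,T)_{\div}$, from which the announced injectivity of $\overline{A(\emptyset,T)} \hookrightarrow H^2(K,T')^D$ follows at once. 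For the image, Lemma~\ref{completion}~a) applied to the same surjection $\mathbb{P}^0(T)_{\wedge} \twoheadrightarrow C$ shows that $I = r(\mathbb{P}^0(T))$ is dense in the profinite group $C$, so $I_{\rm adh} = C$ and the sought-after short exact sequence
$$0 \to I_{\rm adh} \to H^2(K,T')^D \to \Sha^1(K,T) \to 0$$
drops out (the finiteness of $\Sha^1(K,T)$ in this setting, noted in the introduction, identifies it canonically with its own Pontryagin dual).

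The substantive content --- and the only real obstacle --- lies not in this formal extraction but in the preceding Theorem~\ref{qptheo}, whose proof required a delicate filtration argument using Ono's lemma, the structure of units of local fields of residue characteristic zero, and successive applications of the lemmas of Section~2 on $B_{\div}$ versus $B_{\infty-\div}$. Once that identification and the Poitou--Tate sequence of \cite{dhsza2} are on the table, the corollary is a short, purely formal deduction from Lemma~\ref{completion}.
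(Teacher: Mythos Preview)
Your approach is correct and essentially identical to the paper's own proof: invoke the Poitou--Tate sequence of \cite{dhsza2}, Th.~2.9, then apply Lemma~\ref{completion} and Theorem~\ref{qptheo}. Two minor remarks: the sequence in \cite{dhsza2} already terminates in $\Sha^1(T)$ rather than $\Sha^1(T)^D$ (so no self-duality identification is needed, and in any case finiteness alone does not give a \emph{canonical} one), and unlike the $\mathbb{C}((t))$ situation no caveat about extra hypotheses in the cited theorem is required here.
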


\begin{proof}
D'apr\`es \cite{dhsza2}, Th.~2.9, on a une suite exacte
$$0 \to H^0(K,T)_{\wedge} \to \mathbb{P}^0(T)_{\wedge} \to H^2(K,T')^D 
\to \Sha^1(T) \to 0,$$
avec $\Sha^1(T)$ fini et dual de $\Sha^2(T')$. Il suffit alors 
d'appliquer le lemme~\ref{completion} et le th\'eor\`eme~\ref{qptheo}.
\end{proof}

Concernant le sous-groupe de torsion de $A(\emptyset,T)$, on peut encore 
procéder comme dans la section \ref{ctors}, en utilisant cette fois-ci 
la dualit\'e d'Artin-Verdier entre $H^1_c(U,_n {\mathcal T})$ et 
$H^3(U,_n {\mathcal T}')$ (cf. \cite{dhsza1}, preuve du Th. 1.3). 
D'o\`u le théorème suivant:

\begin{theorem}\label{Qptors}
Il existe un accouplement parfait entre une limite inductive de groupes profinis et une limite projective de groupes discrets de torsion:
\begin{equation}\label{acc3}
 A(\emptyset,T)_{\rm tors}  \times \frac{\varprojlim_n H^3(K,{_n}T')}{{\rm Im}(\Sha^2(K,T'))} \rightarrow \mathbb{Q}/\mathbb{Z}.
 \end{equation}
\end{theorem}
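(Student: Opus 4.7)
La démarche suit de très près celle du théorème~\ref{thtors} (cas $k$ algébriquement clos) et du théorème~\ref{c((t))tors} (cas $k=\CC((t))$), en remplaçant systématiquement la dualité de Poincaré ou d'Artin-Verdier adaptée au cas considéré par la dualité d'Artin-Verdier pour les corps $p$-adiques entre $H^1_c(U,{_n}{\mathcal T})$ et $H^3(U,{_n}{\mathcal T}')$ (cf. \cite{dhsza1}, preuve du Th.~1.3).

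Plus précisément, on commence par reprendre mot pour mot la construction de la section~\ref{ctors}, appliquée à un modèle $\mathcal{T}$ de $T$ sur un ouvert non vide strict $U_0 \subset X$ : en utilisant la suite de localisation (\ref{localisation}) et le diagramme (\ref{3arrow}), on obtient un morphisme
$$A(\emptyset,T)_{\rm tors} \to \varinjlim_n \varprojlim_U H^1_c(U,{_n}\mathcal{T}).$$
La dualité d'Artin-Verdier pour les ${_n}\mathcal{T}$ fournit alors
$$\left(\varinjlim_n \varprojlim_U H^1_c(U,{_n}\mathcal{T})\right)^D \cong \varprojlim_n \varinjlim_U H^3(U,{_n}\mathcal{T}') \cong \varprojlim_n H^3(K,{_n}T'),$$
d'où un accouplement naturel $A(\emptyset,T)_{\rm tors} \times \varprojlim_n H^3(K,{_n}T') \to \Q/\Z$, analogue à (\ref{acc}) et (\ref{acc2}).

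Pour identifier le noyau à droite, on établit ensuite l'analogue du lemme~\ref{ators}. Ici, comme $K$ n'est plus de dimension cohomologique $1$, le groupe $H^1(K,T)$ n'est pas nul, et la suite de Kummer donne seulement une injection $T(K)/n \hookrightarrow H^1(K,{_n}T)$ de conoyau $_n H^1(K,T)$. Comme dans la preuve du théorème~\ref{c((t))tors}, cela remplace $\varinjlim_n \Sha^1(K,{_n}T)$ par $\ker(\varinjlim_n \Sha^1(K,{_n}T) \to \Sha^1(K,T))$, et on obtient une suite exacte
$$0 \to \frac{\mathbb{P}^0(T)_{\rm tors}}{T(K)_{\rm tors}} \to A(\emptyset,T)_{\rm tors} \to \ker\!\left(\varinjlim_n \Sha^1(K,{_n}T) \to \Sha^1(K,T)\right) \to 0.$$
Par ailleurs, la suite de Poitou-Tate pour les modules finis en dimension cohomologique $3$ (cas $d=1$ de \cite{diego1}, Th.~2.7), combinée à l'identification
$\bigoplus_v H^3(K_v,{_n}T') \cong ({_n}\mathbb{P}^0(T))^D$,
donne par passage à la limite projective un accouplement parfait
$$\frac{\mathbb{P}^0(T)_{\rm tors}}{T(K)_{\rm tors}} \times \frac{\varprojlim_n H^3(K,{_n}T')}{\varprojlim_n \Sha^3(K,{_n}T')} \to \Q/\Z,$$
ainsi qu'un accouplement parfait entre $\ker(\varinjlim_n \Sha^1(K,{_n}T) \to \Sha^1(K,T))$ et $\varprojlim_n \Sha^3(K,{_n}T')/{\rm Im}(\Sha^2(K,T'))$ par passage à la limite sur les accouplements finis entre $\Sha^1$ et $\Sha^3$ (avec ${\rm Im}(\Sha^2(K,T'))$ apparaissant par dualité avec le conoyau $_n H^1(K,T)$ de la suite de Kummer, via la suite exacte longue reliant $\Sha^2(K,T')$ aux $\Sha^3(K,{_n}T')$). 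Un diagramme à colonnes exactes, de même forme que dans la preuve du théorème~\ref{thtors}, permet alors de conclure par le lemme des cinq.

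L'étape qui demandera le plus de soin sera le traitement simultané des deux limites (inductive sur $n$, projective sur les ouverts $U$ ou sur les entiers $n$) dans l'identification du dual via Artin-Verdier, ainsi que la gestion précise des topologies (les groupes $\varprojlim_U H^1_c(U,{_n}\mathcal{T})$ étant profinis pour chaque $n$) pour garantir que l'accouplement obtenu est bien parfait entre une limite inductive de profinis et une limite projective de groupes discrets de torsion. Le reste se ramène à vérifier mécaniquement la compatibilité des suites exactes avec la dualité de Pontryagin, comme dans \cite{hewitt}, Th.~5.29.
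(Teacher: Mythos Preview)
Your proof is correct and follows exactly the approach indicated by the paper, which simply says to proceed as in section~\ref{ctors} using the Artin-Verdier duality between $H^1_c(U,{_n}\mathcal{T})$ and $H^3(U,{_n}\mathcal{T}')$ from \cite{dhsza1}. You have in fact written out considerably more detail than the paper provides, correctly identifying the needed modifications (notably the replacement of $\varinjlim_n \Sha^1(K,{_n}T)$ by its kernel into $\Sha^1(K,T)$, and the appearance of ${\rm Im}(\Sha^2(K,T'))$ rather than an overlined version since this group is finite here), in direct analogy with the proof of Theorem~\ref{c((t))tors}.
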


Dans cet énoncé, le groupe $\Sha^2(K,T')$ est fini (cf. \cite{dhsza1}, Prop. 3.4).

\begin{corollary} \label{corccqp}
Le groupe $H^3(K,T')$ est nul et on a un accouplement parfait entre une limite inductive de groupes profinis et une limite projective de groupes discrets de torsion:
$$
 A(\emptyset,T)_{\rm tors}  \times \frac{H^2(K,T')_{\wedge}}{{\rm Im}(\Sha^2(K,T'))} \rightarrow \mathbb{Q}/\mathbb{Z}.$$
\end{corollary}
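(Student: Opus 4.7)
The plan is to deduce this corollary from Theorem~\ref{Qptors}, which provides a perfect pairing between $A(\emptyset,T)_{\rm tors}$ and $\varprojlim_n H^3(K,{}_nT')/{\rm Im}(\Sha^2(K,T'))$. It therefore suffices to identify $\varprojlim_n H^3(K,{}_nT')$ with $H^2(K,T')_\wedge$ in a way compatible with the image of $\Sha^2(K,T')$, and the key to this identification is the vanishing $H^3(K,T')=0$.

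First I would establish that vanishing. Since ${\rm cd}(K)=3$ and ${}_nT'$ is torsion, $H^4(K,{}_nT')=0$ for every $n$; from the Kummer exact sequence $0\to{}_nT'\to T'\xrightarrow{n} T'\to 0$, multiplication by $n$ is therefore surjective on $H^3(K,T')$, so $H^3(K,T')$ is divisible. Next, I would pick a finite Galois extension $L/K$ splitting $T'$; then $T'_L$ is quasi-trivial, and by Shapiro's lemma $H^3(L,T'_L)$ reduces to a finite product of groups $H^3(L'',\mathbb{G}_m)$ where each $L''$ is the function field of a smooth projective curve over a finite extension of $\mathbb{Q}_p$. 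The vanishing of $H^3(L'',\mathbb{G}_m)$ in this setting follows from the arithmetic duality framework for such $2$-dimensional arithmetic fields (cf.\ \cite{diego1}, \cite{dhsza1}). A standard restriction--corestriction argument then forces $H^3(K,T')$ to be killed by $[L:K]$; being simultaneously divisible and of finite exponent, it must vanish.

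With this in hand, the Kummer long exact sequence reduces, for each $n>0$, to an isomorphism $H^2(K,T')/n\xrightarrow{\sim} H^3(K,{}_nT')$. Taking $\varprojlim_n$ is exact since the transition maps on $H^2(K,T')/n$ are surjective, so we obtain a canonical isomorphism $\varprojlim_n H^3(K,{}_nT')\cong H^2(K,T')_\wedge$. Under this identification the image of $\Sha^2(K,T')\subset H^2(K,T')$ corresponds to its image in $H^2(K,T')_\wedge$ via the natural map $H^2(K,T')\to H^2(K,T')_\wedge$, so the pairing of Theorem~\ref{Qptors} descends to the claimed perfect pairing
\[ A(\emptyset,T)_{\rm tors}\times \frac{H^2(K,T')_\wedge}{{\rm Im}(\Sha^2(K,T'))}\longrightarrow\mathbb{Q}/\mathbb{Z}. \]

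The main obstacle is precisely the cohomological vanishing $H^3(K,T')=0$: divisibility is cheap from ${\rm cd}(K)=3$, but producing a bound of finite exponent requires appealing to the specific $2$-dimensional arithmetic structure of $L''$ (function field of a curve over a $p$-adic field) through the duality machinery already developed in the cited works; once both ingredients are in place, the rest of the deduction is a routine passage to the limit.
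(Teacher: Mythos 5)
Your proof is correct in substance, but it establishes the key vanishing $H^3(K,T')=0$ by a genuinely different route than the paper. You import the theorem of Jannsen that $H^3(L,\G_m)=0$ for $L$ the function field of a curve over a $p$-adic field, then combine restriction--corestriction (finite exponent) with divisibility (from ${\rm cd}(K)=3$) to kill $H^3(K,T')$; once that is done, the Kummer sequence identifies $\varprojlim_n H^3(K,{}_nT')$ with $H^2(K,T')_{\wedge}$ and the corollary follows from Theorem~\ref{Qptors}. The paper instead derives the vanishing internally: it writes the exact sequence $0 \to \varprojlim_n H^2(K,T')/n \to \varprojlim_n H^3(K,{}_nT') \to \varprojlim_n {}_nH^3(K,T') \to 0$, observes that the torsion-free quotient $\varprojlim_n {}_nH^3(K,T')$ is dual (via Theorem~\ref{Qptors}) to a divisible torsion subgroup of $A(\emptyset,T)$, which must vanish because Theorem~\ref{qptheo} says $A(\emptyset,T)_{\div}$ is uniquely divisible; combined with the divisibility of $H^3(K,T')$ this forces $H^3(K,T')=0$. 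The paper's route is self-contained and, as its closing remark emphasizes, actually \emph{recovers} Jannsen's theorem as a byproduct; your route consumes that theorem as an input, so you lose this byproduct. Your argument is logically sound (there is no circularity, since Jannsen's result has independent proofs), but the one real weakness is the citation: the vanishing $H^3(L,\G_m)=0$ is not a routine consequence of the duality statements in \cite{diego1} or \cite{dhsza1}; it is a specific theorem of Jannsen (\cite{Jan}, p.~128), and your proof stands or falls on naming and using that precise reference. The final passage to the limit (surjective transition maps on $H^2(K,T')/n$, compatibility with ${\rm Im}(\Sha^2(K,T'))$) is fine and matches the paper's.
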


\begin{proof}
On a une suite exacte:
$$0 \rightarrow \varprojlim_n H^2(K,T')/n \rightarrow \varprojlim_n H^3(K,{_n}T') \rightarrow \varprojlim_n {_n}H^3(K,T') \rightarrow 0.$$
Le groupe $\varprojlim_n {_n}H^3(K,T')$ n'a pas de torsion et 
l'image de 
$H^2(K,T')$ (donc aussi celle de $\Sha^2(K,T')$)
dans $\varprojlim_n H^3(K,{_n}T')$ est contenue dans $\varprojlim_n H^2(K,T')/n$. De plus, d'après le théorème \ref{qptheo}, le groupe $A(\emptyset,T)_{\text{tors},\text{div}}$ est nul. Donc $\varprojlim_n {_n}H^3(K,T')=0$. En outre, par dimension cohomologique, le groupe $H^3(K,T')$ est divisible car $H^4(K,_n T')=0$. On en déduit que $H^3(K,T')=0$, puis que $ H^2(K,T')_{\wedge} \cong \varprojlim_n H^3(K,{_n}T')$, ce qui achève la preuve.
\end{proof}

\begin{remarque}
Un résultat d'U. Jannsen (\cite{Jan}, points 2) et 3) de la page 128) affirme que $H^3(K,\mathbb{G}_m)=0$. Cela implique la nullité de $H^3(K,T)$ pour tout $K$-tore $T$ via un argument de restriction-corestriction. Le corollaire précédent 
permet de retrouver ce résultat. On peut aussi l'obtenir plus facilement 
via la suite exacte de Faddeev (cf. \cite{dukedh}, p. 241) 
quand $X={\mathbb P^1}$, 
argument qui s'\'etend sans trop de difficult\'es \`a une courbe quelconque 
en exploitant la nullit\'e de $H^2(k,\pic^0 X)$ (laquelle d\'ecoule 
de \cite{adt}, Cor. I.3.4). 
\end{remarque}

\begin{remarque}
Dans le cas $T=\mathbb{G}_m$, on obtient un accouplement parfait :
$$
 A(\emptyset,\mathbb{G}_m)_{\text{tors}}  \times (\text{Br}\, K)_{\wedge} \rightarrow \mathbb{Q}/\mathbb{Z}.
$$
\end{remarque}

\textit{Remerciements.} Nous tenons à remercier B. Kahn pour 
nous avoir expliqué que le faisceau $\mathbb{Q}/\mathbb{Z}(-1)$ 
est le bon analogue du complexe motivique $\mathbb{Z}(d)$ 
lorsque $d=-1$, ainsi que J.-L. Colliot-Th\'el\`ene pour
plusieurs suggestions et discussions int\'eressantes sur cet article.

\end{document}